\documentclass[11pt]{article}
\usepackage{amsmath, amssymb, amsthm, esint, hyperref, cite}
\usepackage{geometry}

\usepackage{graphics}   % graphics package 1
\usepackage{graphicx}   % graphics package 2
\usepackage{epsfig}    

\usepackage{xcolor}

\geometry{margin = 1.0in}

\def\R{{\mathbb R}}

\def\P{{\mathbb P}}

\theoremstyle{definition}

\newtheorem{theorem}{Theorem}[section]
\newtheorem{corollary}{Corollary}[section]
\newtheorem{definition}{Definition}[section]
\newtheorem{proposition}{Proposition}[section]
\newtheorem{remark}{Remark}[section]
\newtheorem{lemma}{Lemma}[section]
\begin{document}

\title{\sc{A stochastically perturbed fluid-structure interaction problem modeled by a stochastic viscous wave equation}}
\author{Jeffrey Kuan and Sun\v{c}ica \v{C}ani\'{c}\\
Department of Mathematics\\
University of California Berkeley}
\maketitle
\begin{abstract}
We study well-posedness for fluid-structure interaction driven by stochastic forcing.
This is of particular interest in real-life applications where forcing and/or data have a strong stochastic component.
The prototype model studied here is 
a stochastic viscous wave equation, which arises in modeling the interaction
between Stokes flow and an elastic membrane. To account for stochastic perturbations,
 the viscous wave equation is perturbed by
spacetime white noise scaled by a nonlinear Lipschitz function, which depends on the solution. We 
prove the existence of a unique function-valued stochastic mild solution to the corresponding Cauchy problem  in spatial dimensions one and two. 
Additionally, we show that up to a modification, the stochastic mild solution is $\alpha$-H\"{o}lder continuous 
for almost every realization of the solution's sample path, where {{$\alpha\in[0,1)$}} for spatial dimension $n=1$, and
{{$\alpha \in [0,1/2)$}} for spatial dimension $n=2$. 
This result contrasts the known results for the heat and wave equations perturbed
by spacetime white noise, including the damped wave equation perturbed by spacetime white noise,
for which a function-valued mild solution exists only in spatial 
dimension one and not higher. Our results show that  dissipation due to fluid viscosity, which is in the form
of the Dirichlet-to-Neumann operator applied to the time derivative of the membrane displacement, 
sufficiently regularizes the roughness of white noise in the stochastic viscous wave equation to allow the stochastic mild solution to exist even in
dimension two, which is the physical dimension of the problem.
To the best of our knowledge, this is the first result on well-posedness for a stochastically perturbed fluid-structure interaction problem.
\end{abstract}

\section{Introduction}\label{introduction}

We propose a stochastic model for fluid-structure interaction given by a stochastic wave equation augmented by dissipation 
associated with the effects of an incompressible, viscous fluid:
\begin{equation}\label{model}
u_{tt} + {2}\mu \sqrt{-\Delta} u_{t} - \Delta u = f(u)W(dt, dx), \qquad \text{ in } \mathbb{R}^{n}.
\end{equation}
The wave operator models the elastodynamics of a linearly elastic membrane, where $u$ denotes membrane displacement,
while the dissipative part,
which is in the form of the  Dirichlet-to-Neumann operator applied to the time derivative of displacement,
accounts for dissipation due to fluid viscosity, where $\mu$ denotes the fluid viscosity coefficient. 
The equation is forced by spacetime white noise $W(dt, dx)$, which accounts
for stochastic effects in real-life problems.  The spacetime white noise is scaled by a nonlinear, Lipschitz function $f(u)$.
We show below how this equation is  derived from a coupled fluid-structure interaction problem involving the Stokes equations 
describing the flow of an incompressible, viscous fluid, and the wave equation modeling the elastodynamics of a (stretched) linearly elastic membrane. 
 We consider  equation \eqref{model} in $\mathbb{R}^{n}$ with  $n = 1$ and $n = 2$, focusing primarily on $n=2$, which is the physical dimension.
 
We prove the existence of a function-valued {\emph{mild solution}} to a Cauchy problem for equation \eqref{model}, 
which holds both in dimensions $1$ and $2$. Here, by ``mild solution'' we refer to a stochastic mild solution defined
via stochastic integration involving the Green's function, specified below in Definition~\ref{milddef}. 
This is interesting because our result contrasts the results that hold for the stochastic heat and wave equations: 
the stochastic heat and 
the stochastic wave equations do not have function-valued mild solutions in {{spatial}} dimension $2$ or higher. 
Additionally, we prove that sample paths of the stochastic mild solution {{for the stochastic viscous wave equation}} are H\"{o}lder continuous with H\"{o}lder exponents
$\alpha \in [0,1)$ for $n = 1$, and $\alpha \in [0,1/2)$ for $n = 2$. 

Our results show that the viscous fluid dissipation in fluid-structure interaction is sufficient
to smooth out the rough stochastic nature of the real-life data in the problem modeled by the spacetime white noise. 
In particular, the Dirichlet-to-Neumann operator controls the high frequencies in the structure (membrane) {{displacement}} that are 
driven by the spacetime white noise.
 To the best of our knowledge, this is the first result on stochastic fluid-structure interaction.

We begin by describing the fluid-structure interaction model from which the equation \eqref{model} arises. Consider a prestressed infinite elastic membrane surface, which is modeled by the linear wave equation
\begin{equation}\label{wave}
u_{tt} - \Delta u = F, \ \  \text{ on } \Gamma := \{(x_1, x_2, 0) \in \mathbb{R}^{3}: (x_1, x_2) \in \mathbb{R}^{2}\},
\end{equation}
where $u(x_1, x_2)$ denotes the transverse displacement (in the $x_3$ direction) of the elastic surface from its reference configuration $\Gamma$.
See Figure \ref{domain}.

\begin{figure}[htp!]
\center
                    \includegraphics[width = 0.5 \textwidth]{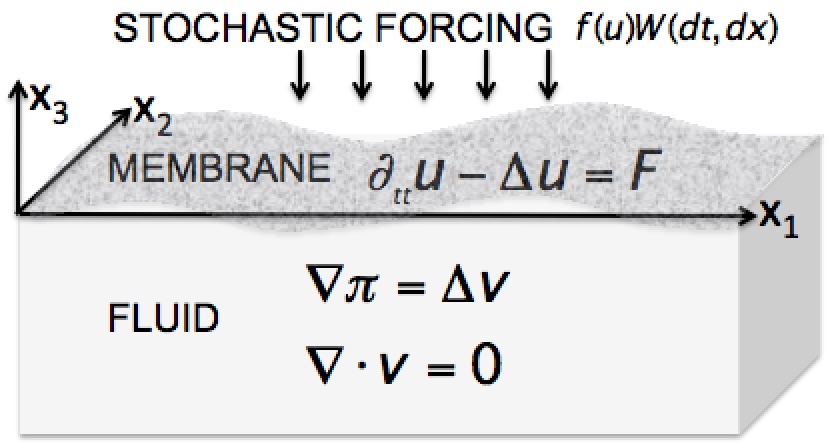}
  \caption{{\small\emph{A sketch of the fluid and structure domains.}}}
\label{domain}
\end{figure}

Beneath this elastic drum surface, we  consider a viscous, incompressible fluid, 
which resides in the lower half-space in $\mathbb{R}^{3}$, 
\begin{equation}\label{Omega}
\Omega = \{(x_1, x_2, x_3) \in \mathbb{R}^{3}: x_3 < 0\},
\end{equation}
modeled by the stationary Stokes equations for an incompressible,
viscous fluid:
\begin{equation}\label{Stokes1}
\left.
\begin{array}{rcl}
 \nabla \cdot \boldsymbol{\sigma}(\pi,{\boldsymbol v})&=& \boldsymbol{0}, \\
\nabla \cdot {\boldsymbol v} &=& 0,
\end{array}
\right\} \quad {\rm in} \ \Omega = \{ (x_1,x_2,x_3) \in \R^3 : x_3 <0\},
\end{equation}
where $\boldsymbol{\sigma} $ is the Cauchy stress tensor, and 
the unknown quantities are the fluid pressure $\pi: \Omega \to \mathbb{R}$ and the fluid velocity $\boldsymbol{v}: \Omega \to \mathbb{R}^{3}$. 
We will be assuming that the fluid is Newtonian, so that 
\begin{equation*}
\boldsymbol{\sigma} = -\pi \boldsymbol{I} + 2\mu \boldsymbol{D}(\boldsymbol{v}),
\end{equation*}
where $\mu$ denotes the fluid viscosity coefficient, 
$\boldsymbol{I}$ is the three by three identity matrix, and $\boldsymbol{D}(\boldsymbol{v})$ is the symmetrized gradient
of fluid velocity $\boldsymbol{D}(\boldsymbol{v}) = (\nabla \boldsymbol{v} + (\nabla \boldsymbol{v})^T)/2$.
%%%%%%%%
\if 1 = 0
\begin{equation*}
\boldsymbol{D}(\boldsymbol{v}) = 
\begin{pmatrix}
\frac{\partial v_{1}}{\partial x_1} & \frac{1}{2}\left(\frac{\partial v_{1}}{\partial x_2} + \frac{\partial v_{2}}{\partial x_1}\right) & \frac{1}{2}\left(\frac{\partial v_{1}}{\partial x_3} + \frac{\partial v_{3}}{\partial x_1}\right) \\
\frac{1}{2}\left(\frac{\partial v_{1}}{\partial x_2} + \frac{\partial v_{2}}{\partial x_1}\right) & \frac{\partial v_{2}}{\partial x_2} & \frac{1}{2}\left(\frac{\partial v_{2}}{\partial x_3} + \frac{\partial v_{3}}{\partial x_2}\right) \\
\frac{1}{2}\left(\frac{\partial v_{1}}{\partial x_3} + \frac{\partial v_{3}}{\partial x_1}\right) & \frac{1}{2}\left(\frac{\partial v_{2}}{\partial x_3} + \frac{\partial v_{3}}{\partial x_2}\right) & \frac{\partial v_{3}}{\partial x_3}
\end{pmatrix}.
\end{equation*}
\fi
%%%%%%%%%%%%
Therefore, the Stokes equations now read
\begin{equation}\label{Stokes}
\left.
\begin{array}{rcl}
 \nabla \pi&=& \mu \triangle \boldsymbol v, \\
\nabla \cdot {\boldsymbol v} &=& 0,
\end{array}
\right\} \quad {\rm in} \ \Omega = \{ (x_1,x_2,x_3) \in \R^3 : z<0\},
\end{equation}
where we require that the fluid velocity is bounded in the lower half space,
and the pressure $\pi \to 0$ as $|x|\to\infty$.

We consider the problem in which the elastic surface is displaced from its reference configuration $\Gamma$
with some given initial displacement and velocity,
allowing only vertical displacement,
{{where the elastodynamics of the elastic surface}} is driven by the total force exerted onto the membrane,
 which comes from the fluid on one side, 
 and an external stochastic forcing on the other. See Figure \ref{domain}.

Inifinite domains are considered to simplify the analysis, since the main purpose of this work is to understand
 the interplay between the dispersion effects in the 2D wave equation, dissipation due to fluid viscosity, and 
 stochasticity imposed by the external forcing, which can be related to the stochasticity of not only the external forcing,
 but also to the stochasticity of data (e.g., inlet/outlet data) in real-life applications, such as blood flow through arteries. 

%%%%%%%%%%%%%%
\if 1 = 0
We begin by describing the fluid and the structure subproblems. We first consider the structure subproblem. We model the elastic structure with reference configuration $\Gamma$ by the wave equation. We assume that the elastic structure only has displacements in the transverse $z$ direction, and hence we model it by the wave equation,
\begin{equation}\label{wave}
u_{tt} - \Delta u = F, \qquad \text{ on } \Gamma = \{(x_1, x_2, 0) \in \mathbb{R}^{3}: (x_1, y) \in \mathbb{R}^{2}\},
\end{equation}
where $u(x_1, x_2)$ denotes the transverse displacement (in the $z$ direction) of the elastic surface from its reference configuration at the point $(x_1, x_2, 0) \in \Gamma$. 

Next, we consider the fluid subproblem. We will model the fluid by the stationary Stokes equation for incompressible,
viscous fluids,
\begin{equation}\label{Stokes1}
\left.
\begin{array}{rcl}
 \nabla \cdot \boldsymbol{\sigma}(\pi,{\boldsymbol v})&=& \boldsymbol{0}, \\
\nabla \cdot {\boldsymbol v} &=& 0,
\end{array}
\right\} \quad {\rm in} \ \Omega = \{ (x_1,x_2,x_3) \in \R^3 : x_3 <0\},
\end{equation}
where $\boldsymbol{\sigma} $ is the Cauchy stress tensor, and 
the unknown quantities are the fluid pressure $\pi: \Omega \to \mathbb{R}$ and the fluid velocity $\boldsymbol{v}: \Omega \to \mathbb{R}^{3}$. 
We will be assuming that the fluid is Newtonian, so that 
\begin{equation*}
\boldsymbol{\sigma} = -\pi \boldsymbol{I} + 2\mu \boldsymbol{D}(\boldsymbol{v}),
\end{equation*}
where $\mu$ denotes the fluid viscosity coefficient, 
$\boldsymbol{I}$ is the three by three identity matrix, and $\boldsymbol{D}(\boldsymbol{v})$ is the symmetrized gradient
of fluid velocity
\begin{equation*}
\boldsymbol{D}(\boldsymbol{v}) = 
\begin{pmatrix}
\frac{\partial v_{1}}{\partial x_1} & \frac{1}{2}\left(\frac{\partial v_{1}}{\partial x_2} + \frac{\partial v_{2}}{\partial x_1}\right) & \frac{1}{2}\left(\frac{\partial v_{1}}{\partial x_3} + \frac{\partial v_{3}}{\partial x_1}\right) \\
\frac{1}{2}\left(\frac{\partial v_{1}}{\partial x_2} + \frac{\partial v_{2}}{\partial x_1}\right) & \frac{\partial v_{2}}{\partial x_2} & \frac{1}{2}\left(\frac{\partial v_{2}}{\partial x_3} + \frac{\partial v_{3}}{\partial x_2}\right) \\
\frac{1}{2}\left(\frac{\partial v_{1}}{\partial x_3} + \frac{\partial v_{3}}{\partial x_1}\right) & \frac{1}{2}\left(\frac{\partial v_{2}}{\partial x_3} + \frac{\partial v_{3}}{\partial x_2}\right) & \frac{\partial v_{3}}{\partial x_3}
\end{pmatrix}.
\end{equation*}
Therefore, the Stokes equations now read
\begin{equation}\label{Stokes}
\left.
\begin{array}{rcl}
 \nabla \pi&=& \mu \triangle \boldsymbol v, \\
\nabla \cdot {\boldsymbol v} &=& 0,
\end{array}
\right\} \quad {\rm in} \ \Omega = \{ (x_1,x_2,x_3) \in \R^3 : z<0\}.
\end{equation}
\fi
%%%%%%%%%%%%%%%%%%%%%%%%%

The fluid and the structure are coupled via two coupling conditions, the kinematic and dynamic coupling conditions,
giving rise to the so-called two-way coupled fluid-structure interaction problem. 
The coupling conditions in the present study are evaluated at a {\emph{fixed (linearized) fluid-structure interface}} corresponding
to the structure's reference configuration $\Gamma$. This is known as {\emph{linear coupling}}.
%We will be assuming small deformation gradients and small displacements of the interface (membrane), 
%so that the fluid and structure 
%can be coupled across the {\textit{linearized interface}}, i.e., across the reference configuration $\Gamma$.
%This is known as \textit{linear coupling}. 
The two conditions read:

%We emphasize that even though the structure itself with reference configuration $\Gamma$ is in motion, 
%the linear coupling is a good approximation of the fluid-structure interaction problem under those conditions. 
%We then note that the first equation in \eqref{Stokes}, which describes Newton's second law of motion for the fluid, is given simply by 
%\begin{equation*}
%\nabla \cdot \boldsymbol{\sigma} = 0.
%\end{equation*}
%The second equation in \eqref{Stokes} is the well-known incompressibility condition, describing the fact that the fluid is incompressible. 

%Now that we have described each of the fluid and structure subproblems separately, we couple them together with appropriate coupling conditions. There will be two coupling conditions: the kinematic coupling condition (describing no-slip continuity of velocities at the fluid-structure interface) and the dynamic coupling condition (describing the balance of forces at the fluid-structure interface). 
\begin{itemize}
\item \textbf{\textit{Kinematic coupling condition.}} The kinematic coupling condition describes the coupling between the kinematic quantities such as velocity. We will be assuming the no-slip condition, meaning that the fluid and structure velocities 
are continuous at the interface (there is no slip between the two):
\begin{equation}\label{kinematic}
u_{t} = \boldsymbol{v}|_{\Gamma}, \ \ {\rm for} \ x = (x_1,x_2)\in \mathbb{R}^{2},   t \ge 0.
\end{equation}
%This is the no-slip condition, which intuitively states that particles of the fluid stick to the structure and thus have the same velocity as the structure, which is $u_{t}$ in this model. 
\item \textbf{\textit{Dynamic coupling condition.}} The dynamic coupling condition describes the balance of forces at the  fluid-structure interface $\Gamma$,
namely, it states that the elastodynamics of the membrane is driven by the 
force corresponding to the jump in traction (normal stress) across the membrane.
On the fluid side, the traction (normal stress) at the interface is given by 
$-\boldsymbol{\sigma} \boldsymbol{e_{x_3}}$, where $\boldsymbol{e_{x_3}}$ is the normal vector to $\Gamma$,
while on the other side, we are assuming a given loading $F_{ext}(u)$ to be a stochastic process $f(u)W(dt,dx)$ in the $\boldsymbol{e_{x_3}}$ direction.
Examples of such a loading can be found in cardiovascular applications, see e.g., \cite{Astorino}.
Since we assume that the structure only has transversal displacement, the dynamic coupling condition reads:
\begin{equation}\label{dynamic}
u_{tt} - \Delta u = -\boldsymbol{\sigma} \boldsymbol{e_{x_3}} \cdot \boldsymbol{e_{x_3}} +  f(u(t, {x})) W(dt, dx) \quad {\rm where} \ x = (x_1,x_2)\in \mathbb{R}^{2}, t \ge 0.
\end{equation}
\end{itemize}

In fluid-structure interaction problems and physical problems in general, physical phenomena are subject to small random deviations that cause deviations from deterministic behavior. The consideration of such stochastic effects in partial differential equations can give rise to new phenomena, and is an area of active research. 
Furthermore, in real-life data, one observes such stochastic noise both in terms of the force exerted 
onto the structure, as well as in the data that drives the problem. 
For example, the measured inlet/outlet pressure data in a fluid-structure interaction problem
describing arterial blood flow, has similar stochastic noise deviations to $F_{ext}(u)= f(u(t, {x})) W(dt, dx)$.
%
%
%As a preliminary model for such stochastic deviations we model the external force $F_{ext}(u)$ as a stochastic process 
%\begin{equation}\label{stochasticFext}
%F_{ext}(u) = f(u(t, {x})) W(dt, dx) \quad {\rm where} \ x = (x_1,x_2)\in \mathbb{R}^{2}.
%\end{equation}
Here $W(dt,dx)$ is spacetime white noise in $(t, x) \in \mathbb{R}^{+} \times \mathbb{R}^{2}$, whose properties we will recall in Sec.~\ref{prelim}.  We will assume that $f: \mathbb{R} \to \mathbb{R}$ is a Lipschitz continuous function. In particular, the case 
$f \not\equiv 1$ allows dependence of the magnitude of the stochastic noise at each point on the structure displacement $u(t, x)$ itself. 

To derive equation \eqref{model} as a model which describes the fluid-structure interaction problem \eqref{wave}-\eqref{dynamic},
we focus on the dynamic coupling condition \eqref{dynamic}. The goal is to try to express the effects of fluid normal stress
via the Dirichlet-to-Neumann operator defined entirely in terms of $u$ and/or its derivatives. 
In this derivation we also use the kinematic coupling condition \eqref{kinematic} as explained below. 

First notice that the right hand-side of \eqref{dynamic} is given by
\begin{equation*}
-\boldsymbol{\sigma} \boldsymbol{e_{x_3}} \cdot \boldsymbol{e_{x_3}} = \pi - {2}\mu \frac{\partial v_{x_3}}{\partial x_3} 
\ {\rm on} \ \Gamma.
\end{equation*}
Since the tangential displacements 
are assumed to be zero, the kinematic coupling condition \eqref{kinematic}
implies that the $x_1$ and $x_2$ components of the fluid velocity $\boldsymbol v$ are zero on $\Gamma$, namely 
$v_{x_1} = v_{x_2} = 0$ on $\Gamma$. By the divergence free condition, one immediately gets that
$
{\partial v_{x_3}}/{\partial x_3} = 0
$
on $\Gamma$.
 Therefore,
\begin{equation}\label{pressure}
-\boldsymbol{\sigma} \boldsymbol{e_{x_3}} \cdot \boldsymbol{e_{x_3}} = \pi \quad {\rm on} \ \Gamma,
\end{equation}
where $\pi$ is the fluid pressure given as a solution to the Stokes equations \eqref{Stokes}.
So it remains to find an appropriate expression for $\pi$ on $\Gamma$
 in terms of the structure displacement $u$. In fact, we will 
show that the following formula holds
\begin{equation}\label{pressureDN}
\pi = -2\mu \sqrt{-\Delta} u_{t} \quad { \text{on} \ \Gamma},
\end{equation}
under the assumption that $u$ and $u_{t}$, along with their spatial derivatives, are smooth functions that are rapidly decreasing at infinity. We will also impose the boundary conditions on \eqref{Stokes}, {{stating}} that the fluid velocity is bounded on the lower half {{space}}, and the pressure $\pi$ has a limit equal to zero as $|x| \to \infty$ in the lower half {{space}}.
Details of this calculation are presented in \cite{KuanCanicNVWE}. Here we present the main steps.

To derive the formula \eqref{pressureDN}, we note that by taking the inner product of the first equation in \eqref{Stokes} with $\boldsymbol{e_{x_3}}$, we obtain
\begin{equation}\label{Neumannpressure}
\frac{\partial \pi}{\partial x_3} = \mu \Delta_{x_1, x_2} v_{x_3} + \mu \frac{\partial^{2} v_{x_3}}{\partial x_3^{{2}}}
= \mu \Delta v_{x_3} ,
\end{equation}
where $\Delta_{x_1, x_2} := \frac{\partial^{2}}{\partial x_1^{2}} + \frac{\partial^{2}}{\partial x_2^{2}}$. Furthermore, by taking the divergence of the first equation in \eqref{Stokes}, and by using the divergence-free condition, we get that the pressure $\pi$ is harmonic. Thus, if we can compute the right hand side of \eqref{Neumannpressure} on $\Gamma$, we can recover $\pi$ as the solution to a Neumann boundary {{value}}  problem for Laplace's equation in the lower half {{space}}, with the boundary condition 
{{requiring}} that $\pi$ goes to zero at infinity. 

To compute the right hand side of \eqref{Neumannpressure}, 
we need to compute $v_{x_3}$.
Taking the Laplacian {{on both sides}} of \eqref{Neumannpressure}, and using the fact that $\pi$ is harmonic, we obtain
\begin{equation}\label{biharmonic}
\Delta^{2} v_{x_3} = 0, \qquad \text{ on } \Omega = \{(x_1, x_2, x_3) \in \R^{3} : x_3 < 0\}.
\end{equation}
{{Thus, $v_{x_3}$ satisfies the}} biharmonic equation with {{the following}} two boundary conditions:
 {{from}} the kinematic coupling condition, we get 
\begin{equation}\label{bc1}
v_{x_3}(x_1, x_2, 0) = u_{t}(x_1, x_2, 0), \qquad \text{ on } \Gamma = \{(x_1, x_2, x_3) \in \R^{3}: x_3 = 0\},
\end{equation}
and from {{the fact that $v_{x_{1}} = v_{x_{2}} = 0$ on $\Gamma$, by the kinematic coupling condition and the fact that $\boldsymbol{v}$ is divergence free, we get}}
\begin{equation}\label{bc2}
\frac{\partial v_{x_3}}{\partial x_3}(x_1, x_2, 0) = 0, \qquad \text{ on } \Gamma = \{(x_1, x_2, x_3) \in \R^{3}: x_3 = 0\}.
\end{equation}
We solve \eqref{biharmonic} with boundary conditions \eqref{bc1} and \eqref{bc2} by taking a Fourier transform in the variables $x_1$ and $x_2$, but not in $x_3$. We will denote the Fourier variables associated with $x_1$ and $x_2$ 
by $\xi_{1}$ and $\xi_{2}$, and we will denote ${\xi} = (\xi_{1}, \xi_{2})$, $|{\xi}|^{2} = \xi_{1}^{2} + \xi_{2}^{2}$. 
{{The Fourier transform equation then reads:}}
 \begin{equation}\label{Fourierstokes}
|\xi|^{4} \widehat{v_{x_3}}({\xi}, x_3) - 2|{\xi}|^{2} \frac{\partial^{2}}{\partial x_3^{2}} \widehat{v_{x_3}}({\xi}, x_3) + \frac{\partial^{4}}{\partial x_3^{4}} \widehat{v_{x_3}}({\xi}, x_3) = 0.
\end{equation}
The solution is given by
\begin{equation}\label{finalFourier}
\widehat{v_{x_3}}({\xi}, x_3) = \widehat{u_{t}}({\xi}) e^{|{\xi}| x_3} - |\xi| \widehat{u_{t}}({\xi}) x_3 e^{|{\xi}| x_3}.
\end{equation}
We can now compute the right hand side of \eqref{Neumannpressure}. 
Taking the Fourier transform of \eqref{Neumannpressure} in the $x_1$ and $x_2$ variables, and evaluating 
{{the equation}} on $\Gamma$ by using {{the kinematic coupling condition}} \eqref{kinematic}, {{we get}}
\begin{equation}\label{NeumannFourier}
\frac{\partial \widehat{\pi}}{\partial x_3}({\xi}, 0) = -\mu |\xi|^{2} \widehat{u_{t}}({\xi}) + \mu \frac{\partial^{2} \widehat{v_{x_3}}}{\partial x_3^{{2}}}({\xi}, 0) 
= -2\mu |\xi|^{2} \widehat{u_{t}}({\xi}),
\end{equation}
where the last equality follows by using the explicit formula for $\widehat{v_{x_3}}({\xi}, z)$ in \eqref{finalFourier}.
%\begin{equation}\label{NeumannFourier}
%\frac{\partial \widehat{\pi}}{\partial x_3}({\xi}, 0) = -2\mu |\xi|^{2} \widehat{u_{t}}({\xi}).
%\end{equation}

We now know that the pressure $\pi$ is a harmonic function in the lower half space,
satisfying the Neumann boundary condition \eqref{NeumannFourier} given in Fourier space.
To recover formula
\eqref{pressureDN} we want  $\pi$ on $\Gamma$.
This can be obtained via the Neumann to Dirichlet operator.
It is well known that the {\textit{Dirichlet to Neumann}} operator for
Laplace's equation in the lower half {{space}} is given by $\sqrt{-\Delta}$,
thereby having a Fourier multiplier $|\xi|$, see e.g., \cite{CS}. 
Therefore, the  Neumann to Dirichlet operator for Laplace's equation in the lower half {{space}} (with the solution to Laplace's equation having a limit of zero at infinity) is a Fourier multiplier of the form $\frac{1}{|\xi|}$. 
Thus, the Neumann to Dirichlet operator applied to the Neumann data \eqref{NeumannFourier} gives the 
pressure as Dirichlet data:
\begin{equation*}
\widehat{\pi}(\xi) = -2\mu |\xi| \widehat{u_{t}}(\xi) \qquad \text{ on } \Gamma,
\end{equation*}
which establishes the desired formula \eqref{pressureDN}.

The  dynamic coupling condition \eqref{dynamic}, together with \eqref{pressureDN}
gives the stochastic model
\begin{equation*}\label{modelfin}
u_{tt} + \sqrt{-\Delta} u_{t} - \Delta u = f(u)W(ds, dy) \ \  \text{ on } \mathbb{R}^{2},
\end{equation*}
where we have set the fluid viscosity $\mu = 1/2$. We will refer to this model as the 
{\textit{stochastic viscous wave equation}}, as it is a stochastic wave equation augmented by the viscous effects of the fluid, which are captured by the 
Dirichlet to Neumann operator acting on the structure velocity $u_{t}$. 

The study of fluid-structure interaction, which concerns the coupled {\emph{dynamical}} interactions between fluids and  deformable structures/solids, has been the focus of many {{works}}.
% in particular related to numerical methods developments \cite{peskin1,peskin2,fauci1,fogelson,peskin3,peskin4,Griffith1,donea1983arbitrary, hughes1981lagrangian,heil2004efficient,QTV00,baaijens,heart3,figueroa,cervera,nobile,deparis,FernandezLifeV,heil2004efficient,matthies,deparis2,MarSunLongitudinal,}. 
The results related to the analysis of fluid-structure interaction started coming out only within
the past 20 years. In particular, fluid-structure interaction problems with \textit{linear coupling}, which is considered in the current work, have  been investigated in, e.g., \cite{BarGruLasTuff2,BarGruLasTuff,Gunzburger,KukavicaTuffahaZiane}. The more general case of \textit{nonlinear coupling}, which has been studied in \cite{BdV1,CDEM,ChengShkollerCoutand,ChenShkoller,CSS1,CSS2,CG,Grandmont16,FSIforBIO_Lukacova,IgnatovaKukavica,ignatova2014well,Kuk,LengererRuzicka,Lequeurre,MuhaCanic13,
BorSun3d,BorSunMultiLayered,BorSunNonLinearKoiter,BorSunSlip,Raymond}, allows the fluid domain to change as a function of time, and the coupling conditions between the fluid and structure are evaluated at the current location of the interface, 
not known {\textit a priori}. This creates additional (geometric) nonlinearities and generates additional mathematical difficulties. 
In all of these works the focus is on deterministic models, in which there are no stochastic effects. 

The study of stochasticity in PDEs has been of recent interest. Most physical phenomena occurring in real-life applications feature the presence of some sort of random noise that perturbs the system from what may be deterministically expected. The types of noise added to the equation can vary, from the simplest spacetime white noise, which intuitively is noise that is ``independent at each time and space", to forms of noise with smoother spatial correlation so that the noise is still independent at separate times, but the noise in space allows for correlation between points and is hence ``smoother" than white noise.

Many classical partial differential equations, such as the heat and wave equations, have been studied with the addition of stochastic random forcing. There are many approaches to the study of such stochastic PDEs. One approach uses Walsh's theory of martingale measures, of which white noise is an example. The theory of integration against such martingale measures
can be found in Walsh's work \cite{Walsh}. Upon defining such an appropriate theory of stochastic integration, one can define what is called a {\textit {mild solution}} to a given stochastic partial differential equation by means of the Green's function. 

The choice of the stochastic noise used in the PDE being studied is of utmost importance. White noise, which is noise that is intuitively ``independent at all spaces and times", is a starting point for many studies. 
In formal mathematical notation, the time and space independence property of white noise is expressed
via expectation as
\begin{equation*}\label{white}
\mathbb{E}(\dot{W}(t, x) \dot{W}(s, y)) = \delta_{0}(t - s) \delta_{0}(x - y),
\end{equation*}
where $\delta_{0}$ is the Dirac delta function, and $\dot{W}(t,x)$, or $W(dt, dx)$, denotes spacetime white noise. 
In the rest of this manuscript, we will be  using $W(dt,dx)$ to denote spacetime white noise
and stochastic integration against white noise.

The white noise perturbed heat and wave equations have interesting properties. 
When perturbed by white noise, the two equations: 
\begin{equation*}
u_{t} - \Delta u = W(dx, dt)\quad \text{and} \quad  
u_{tt} - \Delta u = W(dx, dt) \qquad \text{ in } \mathbb{R}^{n},
\end{equation*}
do not allow function-valued mild solutions in {{spatial}} dimensions two and higher,
while they do in {{spatial}} dimension one.
See, for example,  \cite{DMini} and \cite{KMini},
where questions of  existence and uniqueness of mild solutions are addressed. 
This interesting property related to {{spatial}} dimensions two and higher is
due to the lack of square integrability of the Green's function in time and space, as we discuss later. 
Because of this property, ``smoother" types of stochastic noise, such as spatially homogeneous {{Gaussian}} noise, are used to perturb the stochastic heat and wave equations in higher dimensions in order to yield function-valued {{mild}} solutions. 
In formal mathematical notation, such spatially homogeneous Gaussian noise $\dot{F}(t, x)$ has a covariance structure
\begin{equation}\label{spatialhom}
\mathbb{E}(\dot{F}(t, x) \dot{F}(s, y)) = \delta_{0}(t - s) k(x - y),
\end{equation}
where $k: \mathbb{R}^{n} \to \mathbb{R}$. Note that the formal case of setting $k$ to be the Dirac delta ``function" recovers the previous white noise case, though choosing smoother functions $k$ allows us to formulate ``smoother" types of noise. See Sec.~3 of \cite{DMini} and the work in \cite{DF} for more information about spatially homogeneous Gaussian noise. 
One of the key questions in studying stochastically perturbed 
PDEs is what  conditions on $k$ need to be imposed so that the resulting equation with the spatially homogeneous Gaussian noise with covariance structure \eqref{spatialhom} has function-valued mild solutions?
See, for example \cite{DF}, and for more general contexts \cite{Dalang} and \cite{KZ}.

In the current manuscript, we do not need the properties of general spatially homogeneous Gaussian noise.
This is because,
as we shall see below, the viscous wave equation 
\begin{equation}\label{vwe}
u_{tt} + \sqrt{-\Delta} u_{t} - \Delta u = F, \qquad \text{ on } \mathbb{R}^{n},
\end{equation}
first considered in \cite{KuanCanicNVWE} as a model for fluid-structure interaction,
combines the following two desirable properties:
 the ``right'' spacetime scaling (c.f. wave equation), and adequate dissipative effects.
 The resulting behavior 
is ``in between'' the wave and heat equations.
The viscous wave equation \eqref{vwe} turns out to have just the right scaling and dissipation to 
 allow function-valued mild solutions even in {{spatial}} dimension two for the white noise perturbed equation
\begin{equation}\label{whitevwe}
u_{tt} + \sqrt{-\Delta} u_{t} - \Delta u = W(dx, dt) \qquad \text{ in } \mathbb{R}^{n}.
\end{equation}
\if 1 = 0
%%%%%%%%%%%%%
To see that solutions of the deterministic viscous wave equation
the deterministic viscous wave equation, 
which was first considered in \cite{JC} as a fluid-structure interaction model, is given by
\begin{equation}\label{vwe}
u_{tt} + \sqrt{-\Delta} u_{t} - \Delta u = F, \qquad \text{ on } \mathbb{R}^{n}.
\end{equation}
This equation, see \cite{JC}, has behavior that is ``in between" the wave and heat equations. This can be seen by the Fourier transform, in which there is motion of waves, which is damped over time by the parabolic effects of the dissipative Dirichlet to Neumann operator $\sqrt{-\Delta}$. 

Since the viscous wave equation \eqref{vwe} is in some sense ``in between" the wave and heat equation, the following result for the white noise perturbed equation is surprising. While the stochastic wave and heat equations with white noise perturbation given by
\begin{equation*}
u_{t} - \Delta u = W(dx, dt) \qquad \text{ in } \mathbb{R}^{n},
\end{equation*}
\begin{equation*}
u_{tt} - \Delta u = W(dx, dt) \qquad \text{ in } \mathbb{R}^{n},
\end{equation*}
have function-valued mild solutions only in dimension $1$ but not $2$ or above, we show in this paper that the following stochastic viscous wave equation
\begin{equation}\label{whitevwe}
u_{tt} + \sqrt{-\Delta} u_{t} - \Delta u = W(dx, dt) \qquad \text{ in } \mathbb{R}^{n},
\end{equation}
has function-valued mild solutions in both dimensions $1$ and $2$ (but not $3$ or higher). In particular, unlike for the heat and wave equations, perturbing the viscous wave equation by spacetime white noise in dimension $2$ still yields function-valued mild solutions. 
%%%%%%%%%%%%%%
\fi
This is  of great interest, since equations \eqref{model} and \eqref{whitevwe} in two {{spatial}} dimensions correspond exactly to the physical fluid-structure interaction model we are considering, and hence have direct physical significance. 

The main results  of this work are: (1) the existence of a function-valued mild solution for the white noise perturbed 
viscous wave equation \eqref{model} (and \eqref{whitevwe}) for dimensions $n = 1$ and $n=2$, and
(2) H\"{o}lder continuity $C^{0,\alpha }$ with $\alpha \in [0,1)$ for $n = 1$, and $\alpha \in [0,1/2)$ for $n=2$,
of {{``every" realization of the displacement $u$}}, obtained as a mild solution to the randomly perturbed viscous wave equation.

In particular, in terms of H\"{o}lder continuity, our results imply that the stochastic mild solution to equation \eqref{model} with zero initial data has a continuous {\textit {modification}} that is $\alpha$-H\"{o}lder continuous in time and space, with $\alpha \in [0,1)$ for $n = 1$, and $\alpha \in [0,1/2)$ for $n=2$.
Here, a {\textit{modification}} of a stochastic process $\{X_i\}_{i\in I}$ is defined to be a stochastic process
$\{\tilde{X}_i\}_{i\in I}$ such that the probability $\P(\tilde{X}_i = X_i) = 1$, for all $i \in I$.
Thus, we show that the stochastic function-valued mild solution {{has a modification that is a H\"{o}lder continuous function for
every realization $u$ of the displacement, obtained as a mild solution to the randomly perturbed viscous wave equation \eqref{model}.}} 

Even in dimension $n=1$, this contrasts the results for the stochastically perturbed heat and wave equations:
\begin{equation}\label{Lipheatwave}
u_{t} - \Delta u = f(u)W(dx, dt) \quad \text{ and } \quad u_{tt} - \Delta u = f(u)W(dx, dt) \qquad \text{ on } \mathbb{R}.
\end{equation}
Namely, in $n=1$, the function-valued mild solutions {{(up to modification)}} for zero initial data 
are $\alpha$-H\"{o}lder continuous in time and $\beta$-H\"{o}lder continuous in space, where $\alpha \in [0, 1/4)$, $\beta \in [0, 1/2)$ for the stochastic heat equation, and $\alpha, \beta \in [0, 1/2)$ for the stochastic wave equation, 
see \cite{KMini},  \cite{DMini}, and  \cite{Hairer}. The difference in space and time H\"{o}lder regularity between the heat and wave equations is due to the scaling of space and time, where for the heat equation, one time derivative ``corresponds'' to two spatial derivatives, while for the wave equation, one time derivative ``corresponds'' to one spatial derivative. 
In the stochastic viscous wave equation, the additional regularizing effect of the fluid viscosity implies 
{{improved H\"{o}lder regularity. In spatial dimension one, the solution is H\"{o}lder continuous of order $\alpha \in [0, 1)$ in space and time, which is an improvement over the results for both the stochastic heat and wave equations.  
In spatial dimension two, the solution is H\"{o}lder continuous of order $\alpha \in [0, 1/2)$ in space and time, whereas the stochastic heat and wave equations do not have function-valued mild solutions in spatial dimension two.}}

\if 1= 0
Since the spacetime scaling of the viscous wave equation is the same as that for the wave equation, it is to be expected that we obtain the same H\"{o}lder continuity result. However, we emphasize that our result for H\"{o}lder continuity of \eqref{Lipeqn} holds not only in dimension one, but also in dimension two. 
\fi 

The literature on the  H\"{o}lder continuity properties of the solutions to the heat equation and the wave equation { {with random noise}} in {{spatial}} dimensions two and higher,  is an area of extensive study. However, we emphasize again that for these equations, the stochastic noise is not white noise, but something smoother, such as, e.g., spatially homogeneous Gaussian noise, {{as a function-valued mild solution does not exist with spacetime white noise in spatial dimensions two and higher for these equations}}. We refer the reader to \cite{CD}, \cite{DSS}, \cite{SSS} for more details.

 This paper is organized as follows.
In Sec.~\ref{prelim}, we recall the properties of white noise, stochastic integration, and the deterministic forms of the heat, wave, and viscous wave equation solutions that will be necessary to show the main result. 
In Sec.~\ref{whitenoise}, we show the existence and uniqueness of a stochastic function-valued mild solution for equation \eqref{model},
%stated here again:
%\begin{equation*}\label{Lipeqn}
%u_{tt} + \sqrt{-\Delta} u_{t} - \Delta u = f(u)W(dx, dt) \qquad \text{ in } \mathbb{R}^{n},
%\end{equation*}
in dimensions $n = 1, 2$, where $f: \mathbb{R}^{n} \to \mathbb{R}$ is a Lipschitz continuous function,
and in Sec.~\ref{Holder}, we study the H\"{o}lder continuity of sample paths of solutions to \eqref{model}.

\section{Preliminaries}\label{prelim}

In this section, we first recall some basic facts about the deterministic linear heat, wave, and viscous wave equations, and about stochastic processes that we will need in the upcoming sections.  Then, we discuss white noise as a Gaussian process, and recall some elementary properties of stochastic integration. 

\subsection{The viscous wave equation}

We begin by considering the (linear) viscous wave equation
\begin{equation}\label{zerolin}
u_{tt} + \sqrt{-\Delta} u_{t} - \Delta u = 0,
\end{equation}
with initial data
\begin{equation*}
u(0, x) = g(x), \qquad \partial_{t}u(0, x) = h(x).
\end{equation*}
We recall some basic properties related to the analysis of this equation here, and recommend
 \cite{KuanCanicNVWE} for more information. 
Assuming that the initial data $g, h$ are regular enough, for example $g, h \in \mathcal{S}(\mathbb{R}^{n})$, we can explicitly solve this equation using the Fourier transform to obtain
\begin{equation}\label{FTv}
\widehat{u}(t, \xi) = \widehat{g}(\xi)e^{-\frac{|\xi|}{2}t}\left(\text{cos}\left(\frac{\sqrt{3}}{2}|\xi|t\right) + \frac{1}{\sqrt{3}}\text{sin}\left(\frac{\sqrt{3}}{2}|\xi|t\right)\right) + \widehat{h}(\xi)e^{-\frac{|\xi|}{2}t}\frac{\text{sin}\left(\frac{\sqrt{3}}{2}|\xi|t\right)}{\frac{\sqrt{3}}{2}|\xi|}.
\end{equation}
From the Fourier representation one can see that this equation has both parabolic and wave-like properties. 
The wavelike behavior is represented by the presence of cosine and sine, and
the strong parabolic dissipation is given by the exponential factor $e^{-\frac{|\xi|}{2}t}$, 
which causes damping of frequencies over time. 

Of particular interest to this work is the solution to the general inhomogeneous problem
\begin{equation}\label{inhomlin}
u_{tt} + \sqrt{-\Delta} u_{t} - \Delta u = F,
\end{equation}
with initial data $u(0, x) = g(x)$, $\partial_{t}u(0, x) = h(x)$,
which can be obtained using Duhamel's principle: 
\begin{eqnarray}\label{Fouriersoln1}
\widehat{u}(t, \xi) &= \widehat{g}(\xi)e^{-\frac{|\xi|}{2}t}\left(\text{cos}\left(\frac{\sqrt{3}}{2}|\xi|t\right) + \frac{1}{\sqrt{3}}\text{sin}\left(\frac{\sqrt{3}}{2}|\xi|t\right)\right) + \widehat{h}(\xi)e^{-\frac{|\xi|}{2}t}\frac{\text{sin}\left(\frac{\sqrt{3}}{2}|\xi|t\right)}{\frac{\sqrt{3}}{2}|\xi|} 
\nonumber
\\
&+ \displaystyle{\int_{0}^{t} \widehat{F}(\tau, \xi)e^{-\frac{|\xi|}{2}(t - \tau)}\frac{\text{sin}\left(\frac{\sqrt{3}}{2}|\xi|(t - \tau)\right)}{\frac{\sqrt{3}}{2}|\xi|} d\tau.}
\qquad\qquad\qquad
\end{eqnarray}
The inverse Fourier transform gives the solution $u$ in physical space:
\begin{eqnarray}\label{Fouriersoln2}
u(t, \cdot) &= e^{-\frac{\sqrt{-\Delta}}{2}t}\left(\text{cos}\left(\frac{\sqrt{3}}{2}\sqrt{-\Delta}t\right) + \frac{1}{\sqrt{3}}\text{sin}\left(\frac{\sqrt{3}}{2}\sqrt{-\Delta}t\right)\right)g + e^{-\frac{\sqrt{-\Delta}}{2}t}\frac{\text{sin}\left(\frac{\sqrt{3}}{2}\sqrt{-\Delta}t\right)}{\frac{\sqrt{3}}{2}\sqrt{-\Delta}}h
\nonumber
\\
&+ \displaystyle{\int_{0}^{t} e^{-\frac{\sqrt{-\Delta}}{2}(t - \tau)}\frac{\text{sin}\left(\frac{\sqrt{3}}{2}\sqrt{-\Delta}(t - \tau)\right)}{\frac{\sqrt{3}}{2}\sqrt{-\Delta}}F(\tau, \cdot) d\tau.}\qquad\qquad\qquad
\end{eqnarray}

Of considerable importance in future sections will be the effect of an inhomogeneous source term on the linear operator. 
In particular, the solution to \eqref{inhomlin} with zero initial data is given by the formula:
\begin{equation}\label{duhamel}
u(t, \cdot) = \displaystyle{\int_{0}^{t} e^{-\frac{\sqrt{-\Delta}}{2}(t - s)}\frac{\text{sin}\left(\frac{\sqrt{3}}{2}\sqrt{-\Delta}(t - s)\right)}{\frac{\sqrt{3}}{2}\sqrt{-\Delta}}F(s, \cdot) d\tau.}
\end{equation}

By recalling that the Fourier transform interchanges multiplication of functions and convolution, we can rewrite the formula \eqref{duhamel} in a more explicit manner. Let us define the kernel $K_{t}(x)$ by the inverse Fourier transform,
\begin{equation}\label{kernel}
K_{t}(x) = \frac{1}{(2\pi)^{n}} \int_{\mathbb{R}^{n}} e^{ix\cdot \xi} e^{-\frac{|\xi|}{2}t}\frac{\sin\left(\frac{\sqrt{3}}{2}|\xi|t\right)}{\frac{\sqrt{3}}{2}|\xi|} d\xi.
\end{equation}
To take advantage of the scaling of this PDE, we introduce the unit scale kernel $K(x)$, defined by
\begin{equation}\label{unitkernel}
K(x) = \frac{1}{(2\pi)^{n}} \int_{\mathbb{R}^{n}} e^{ix\cdot \xi} e^{-\frac{|\xi|}{2}}\frac{\sin\left(\frac{\sqrt{3}}{2}|\xi|\right)}{\frac{\sqrt{3}}{2}|\xi|} d\xi,
\end{equation}
which is just the kernel $K_{t}(x)$ at unit time $t = 1$. A simple change of variables shows the following crucial scaling relation:
\begin{equation}\label{viscouskernelscale}
K_{t}(x) = t^{1 - n} K\left(\frac{x}{t}\right).
\end{equation}

Equipped with the notation above, we can  rewrite \eqref{duhamel} in physical spatial variables as
\begin{equation}\label{convolution}
u(t, \cdot) = \int_{0}^{t} K_{t - s}(\cdot) * F(s, \cdot) ds = \int_{0}^{t} \int_{\mathbb{R}^{n}} K_{t - s}(x - y) F(s, y) dy ds,
\end{equation}
where the convolution operator $*$ denotes a convolution only in the spatial variables. 

The importance of using the kernel $K_{t}(x)$ to express the solution to the viscous wave equation explicitly as \eqref{convolution} lies in the fact that we have the following strong estimate for the unit-scale kernel $K(x)$, which carries over to the general kernel $K_{t}(x)$ by the scaling relation \eqref{viscouskernelscale}. The following lemma reflects the strong dissipative effects of the fluid viscosity, represented by the presence of the Dirichlet to Neumann operator. 

\begin{lemma}\label{kernelLq}
For all dimensions $n$, the kernel $K(x)$ is in $L^{q}(\mathbb{R}^{n})$ for all $1 \le q \le \infty$.
\end{lemma}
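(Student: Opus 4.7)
The strategy is to establish $K \in L^1(\R^n) \cap L^\infty(\R^n)$ and then interpolate: the elementary bound $\|K\|_{L^q} \le \|K\|_{L^\infty}^{1 - 1/q} \|K\|_{L^1}^{1/q}$ gives the conclusion for every $1 \le q \le \infty$.

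The $L^\infty$ estimate is immediate. Since $|\sin(r)/r| \le 1$, the Fourier symbol satisfies $|\hat K(\xi)| \le e^{-|\xi|/2}$, which is integrable on $\R^n$. Hence $\hat K \in L^1(\R^n)$ and Fourier inversion gives $K \in L^\infty(\R^n)$; indeed $K$ is continuous, with $\|K\|_\infty \le (2\pi)^{-n}\|\hat K\|_{L^1}$.

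For $K \in L^1(\R^n)$, the plan is to obtain the pointwise decay bound $|K(x)| \le C(1+|x|)^{-(n+1)}$. I would rewrite the sine as a difference of exponentials,
\[
\hat K(\xi) = \frac{e^{-z_-|\xi|} - e^{-z_+|\xi|}}{i\sqrt{3}\,|\xi|}, \qquad z_\pm := \frac{1 \pm i\sqrt{3}}{2},
\]
and, for $n \ge 2$, apply the analytic continuation of the Poisson-type identity $\mathcal{F}^{-1}[e^{-z|\xi|}](x) = c_n z/(z^2+|x|^2)^{(n+1)/2}$, valid for $\mathrm{Re}(z)>0$. Integrating in $z$ along a horizontal ray produces
\[
\mathcal{F}^{-1}\!\left[\frac{e^{-z|\xi|}}{|\xi|}\right]\!(x) = \frac{c_n}{(n-1)(z^2 + |x|^2)^{(n-1)/2}},
\]
so $K(x)$ emerges as a multiple of the imaginary part of $(z_+^2 + |x|^2)^{-(n-1)/2}$. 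The key cancellation is that the leading term of order $|x|^{-(n-1)}$ in the large-$|x|$ expansion is $z$-independent and drops out in the subtraction, while the next term involves $z_+^2 - z_-^2 = i\sqrt{3}$ and yields the claimed $|x|^{-(n+1)}$ decay. For $n=1$ the antiderivative in $z$ produces a logarithm rather than a power; a direct Frullani-type computation then gives the closed form $K(x) = -\arg(|x|^2 - (1+i\sqrt{3})/2)/(\pi\sqrt{3})$, which is bounded on $\R$ and decays like $|x|^{-2}$ as $|x|\to\infty$, matching the general pattern.

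The main technical obstacle is choosing the branches of $(z^2+|x|^2)^{(n-1)/2}$ consistently: because $\mathrm{Re}(z_\pm^2) = -1/2$, the argument $z_\pm^2 + |x|^2$ lies in the open right half-plane as soon as $|x|^2 > 1/2$, so the principal branch is legitimate in precisely the asymptotic regime where the decay estimate is needed. Combined with the $L^\infty$ bound, which controls $K$ on the compact region $|x|^2 \le 1/2$, the pointwise decay delivers $K \in L^1(\R^n)$ and completes the proof.
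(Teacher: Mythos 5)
Your argument is correct, and it takes a genuinely different route from the paper's. The paper disposes of this lemma by citing Lemma 3.3 of \cite{KuanCanicNVWE}, whose proof is a repeated-integration-by-parts estimate on the oscillatory integral defining $K$; that method yields pointwise decay of the form $|K(x)| \le C_N |x|^{-1 - n\frac{N-1}{N}}$ for every $N \ge n+1$ (compare the statement of Lemma \ref{Jlemma} for the companion kernel $J$), which together with boundedness gives $K \in L^1 \cap L^\infty$ and hence every $L^q$ by interpolation, exactly as in your final step. You instead compute $K$ essentially in closed form: writing the symbol as $(i\sqrt{3}\,|\xi|)^{-1}\bigl(e^{-z_-|\xi|} - e^{-z_+|\xi|}\bigr)$ and analytically continuing the Poisson-kernel identity to $\mathrm{Re}(z)>0$, you find for $n \ge 2$ that $K$ is a real constant multiple of $\mathrm{Im}\,(z_+^2+|x|^2)^{-(n-1)/2}$, whose leading (real) term of order $|x|^{-(n-1)}$ cancels and leaves the sharp decay $|K(x)| \sim c\,|x|^{-(n+1)}$. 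I verified the $n=1$ closed form (e.g. it gives $K(0)=2/(3\sqrt{3})$, matching the direct Laplace-transform computation of the defining integral) and the branch question, which is in fact harmless globally, not only for $|x|^2>1/2$: for $\mathrm{Re}(z)>0$ the quantity $z^2+|x|^2$ never meets $(-\infty,0]$, so the principal branch is analytic on the whole right half-plane and the identity theorem applies without restriction. Your route buys a self-contained proof, the optimal decay exponent, and an explicit formula that could replace the external citation; the paper's route buys robustness, since the same integration-by-parts scheme applies verbatim to $J$ and to symbols with no closed-form inverse transform, at the price of a decay exponent that only approaches $n+1$ without attaining it. Note also that your method is close in spirit to the paper's Proposition \ref{kernelformula}, which likewise invokes the Poisson kernel but keeps the wave propagator as a real convolution factor instead of complexifying the time parameter.
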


\begin{proof}
The proof of this lemma is by estimates using a repeated integration by parts. We refer the reader to the proof of Lemma 3.3 in \cite{KuanCanicNVWE}.
\end{proof}

\if 1 = 0
%%%%%%%%%%%%%%%%%%%%%%%%%
We can also give the following more explicit form of the kernel $K_{t}(x)$.

\begin{proposition}\label{kernelformula}
Define the corresponding kernel for the wave equation
\begin{equation*}
K^{W}_{t}(x) = \frac{1}{(2\pi)^{n}} \int_{\mathbb{R}^{n}} e^{ix\cdot \xi} \frac{\sin(|\xi| t)}{|\xi|} d\xi,
\end{equation*}
where superscript $W$ stands for the wave equation. Then, $K_{t}(x)$ in dimension $n$, defined by \eqref{kernel}, is given by the convolution
\begin{equation*}
K_{t}(x) = c_{n} \int_{\mathbb{R}^{n}} \frac{t}{(t^{2} + 4|x - y|^{2})^{\frac{n + 1}{2}}}\  K^{W}_{\frac{\sqrt{3}}{2}t}(y) dy,
\end{equation*}
where $c_{n}$ is a constant depending only on the dimension $n$. 
\end{proposition}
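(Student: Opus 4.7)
The plan is to verify the identity distributionally by comparing the Fourier transforms of the two sides. On the right-hand side, the convolution theorem converts the spatial convolution into a product of Fourier transforms, so the proof reduces to computing the Fourier transforms of the Poisson-type factor $Q_t(x) := t/(t^{2}+4|x|^{2})^{(n+1)/2}$ and of the wave kernel $K^{W}_{\sqrt{3}t/2}(y)$, then matching their product with the Fourier multiplier defining $K_t$ in \eqref{kernel}.

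For the first factor, I will factor a $4$ out of the denominator to expose a rescaled Poisson kernel:
\[
Q_t(x) = \frac{1}{2^{n}}\cdot\frac{t/2}{((t/2)^{2}+|x|^{2})^{(n+1)/2}}.
\]
The classical harmonic-extension formula for the Poisson kernel on the upper half-space then gives $\widehat{Q_t}(\xi) = \kappa_n\, e^{-t|\xi|/2}$ with $\kappa_n = \pi^{(n+1)/2}/(2^{n}\Gamma((n+1)/2))$. By definition of $K^{W}_{\tau}$ as the inverse Fourier transform of $\sin(|\xi|\tau)/|\xi|$, the second Fourier factor is $\widehat{K^{W}_{\sqrt{3}t/2}}(\xi) = \sin(\sqrt{3}|\xi|t/2)/|\xi|$. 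Multiplying these two Fourier transforms and comparing the result with $\widehat{K_t}(\xi) = e^{-|\xi|t/2}\sin(\sqrt{3}|\xi|t/2)/(\sqrt{3}|\xi|/2)$ read off from \eqref{kernel} shows that the two sides agree provided one selects $c_n = 2/(\sqrt{3}\,\kappa_n) = 2^{n+1}\Gamma((n+1)/2)/(\sqrt{3}\,\pi^{(n+1)/2})$, which depends only on the dimension $n$. Fourier uniqueness then delivers the stated identity.

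The main obstacle is distributional: in general dimensions the wave kernel $K^{W}_{\tau}$ is not a locally integrable function (for instance, in $n=3$ it is a surface measure on the sphere of radius $\tau$), so the convolution integral in the statement must be read in the tempered-distribution sense, with the smooth and rapidly decaying factor $Q_t$ regularizing the product back to a genuine function. For the physically relevant cases $n=1,2$ this subtlety is mild, since $K^{W}_{\tau}$ is a function with an integrable singularity (in two dimensions, $K^{W}_{\tau}(y) = (2\pi)^{-1}(\tau^{2}-|y|^{2})_{+}^{-1/2}$), and the identity can be confirmed by a standard mollification on the Fourier side, e.g.\ inserting a Gaussian cutoff $e^{-\varepsilon|\xi|^{2}}$ inside the inverse Fourier integrals defining both sides and passing to the limit $\varepsilon\to 0^{+}$.
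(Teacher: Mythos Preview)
Your proof is correct and follows essentially the same approach as the paper: both arguments identify the kernel $K_t$ as a convolution by recognizing its Fourier multiplier $e^{-|\xi|t/2}\,\sin(\sqrt{3}|\xi|t/2)/(\sqrt{3}|\xi|/2)$ as the product of the Poisson multiplier $e^{-|\xi|t/2}$ (whose inverse Fourier transform is the function $t/(t^{2}+4|x|^{2})^{(n+1)/2}$ up to a dimensional constant) and the wave multiplier $\sin(\sqrt{3}|\xi|t/2)/|\xi|$, then invoke the exchange of multiplication and convolution under the Fourier transform. Your version adds the explicit value of $c_n$ and a careful remark on the distributional interpretation needed in dimensions $n\ge 3$, but the underlying idea is the same as the paper's.
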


\begin{proof}
We use formula \eqref{kernel} and recall that the Fourier transform interchanges multiplication and convolutions. The inverse Fourier transform of $e^{-\frac{|\xi|}{2}t}$ is
\begin{equation*}
\frac{1}{(2\pi)^{n}} \int_{\mathbb{R}^{n}} e^{ix\cdot \xi} e^{-\frac{|\xi|}{2}t} d\xi = \tilde c_{n} \frac{t}{(t^{2} + 4|x - y|^{2})^{\frac{n + 1}{2}}},
\end{equation*}
where $\tilde c_{n}$ depends only on $n$. From the definition of $K_t^{W}$, we get
\begin{equation}\label{explicitkernel}
K_t^{W}(x) = \frac{1}{(2\pi)^{n}} \int_{\mathbb{R}^{n}} e^{ix\cdot \xi} \frac{\sin\left(\frac{\sqrt{3}}{2}|\xi|t\right)}{\frac{\sqrt{3}}{2}|\xi|} d\xi = \frac{2}{\sqrt{3}} K^{W}_{\frac{\sqrt{3}}{2}t}(x).
\end{equation}
The result then follows by using the fact that the Fourier transform interchanges multiplication and convolution, where we replaced the constant $\frac{2}{\sqrt{3}} \tilde c_{n}$ by $c_{n}$.

\end{proof}
{{This will be used later in TODO when we compare the behaviors of the stochastic viscous wave equation and the stochastic wave equation.}} 

%%%%%%%%%%%%%%%%%%%
\fi

This representation of solution will be important later in Section \ref{WellPosed}
when we discuss well-posedness   of the {\em{stochastic}} viscous wave equation. 
To compare the stochastic viscous wave equation with the stochastic heat and the stochastic wave equations
as will be done in Section \ref{WellPosed},
we now give the analogue of the above analysis using a convolution kernel for the heat and wave equations, focusing on the inhomogeneous forms of these equations with zero initial data. 

First, we consider the inhomogeneous heat equation.
Define the heat equation kernel and the corresponding unit scale kernel:
\begin{equation}\label{kernelheat}%\label{unitkernelheat}
K^{H}_{t}(x) = \frac{1}{(2\pi)^{n}} \int_{\mathbb{R}^{n}} e^{ix \cdot \xi} e^{-|\xi|^{2}t} d\xi = \frac{1}{(4\pi t)^{n/2}}e^{-\frac{x^{2}}{4t}},
\
K^{H}(x) = \frac{1}{(2\pi)^{n}} \int_{\mathbb{R}^{n}} e^{ix \cdot \xi} e^{-|\xi|^{2}} d\xi = \frac{1}{(4\pi)^{n/2}} e^{-\frac{x^{2}}{4}}.
\end{equation}
A simple change of variables shows the following scaling relation for the heat equation kernel:
\begin{equation}\label{scalingheat}
K^{H}_{t}(x) = t^{-\frac{n}{2}} K^{H}\left(\frac{x}{t^{1/2}}\right).
\end{equation}
In terms of the heat equation kernel, the solution to the inhomogeneous heat equation 
\begin{equation*}
u_{t} - \Delta u = F 
\end{equation*}
with zero initial data  is given by the formula:
\begin{equation}\label{heatformula}
u(t, x) = \int_{0}^{t} e^{-(t - s)\Delta} F(s, \cdot) ds = \int_{0}^{t} K^{H}_{t - s}(\cdot) * F(s, \cdot) ds = \int_{0}^{t} \int_{\mathbb{R}^{n}} K^{H}_{t - s}(x - y) F(s, y) dy ds.
\end{equation}

Note that in all dimensions $n$, and for all times $t > 0$, the kernel $K^{H}_{t}(x)$ defined in \eqref{kernelheat}, is function-valued and is, in fact, a Schwartz function. 

Next, we carry out the same analysis for the inhomogeneous wave equation. The wave equation kernel and the 
corresponding unit scale kernel can be defined similarly as
\begin{equation}\label{kernelwave}%\label{unitkernelwave}
K^{W}_{t}(x) = \frac{1}{(2\pi)^{n}} \int_{\mathbb{R}^{n}} e^{ix \cdot \xi} \frac{\sin(|\xi|t)}{|\xi|} d\xi,
\
K^{W}(x) = \frac{1}{(2\pi)^{n}} \int_{\mathbb{R}^{n}} e^{ix \cdot \xi} \frac{\sin(|\xi|)}{|\xi|} d\xi.
\end{equation}
The corresponding scaling relation is
\begin{equation}\label{scalingwave}
K^{W}_{t}(x) = t^{1 - n} K^{W}\left(\frac{x}{t}\right),
\end{equation}
which is the same as the scaling \eqref{viscouskernelscale} of the kernel for the viscous wave equation. 
The solution to the inhomogeneous wave equation 
\begin{equation*}
u_{tt} - \Delta u = F
\end{equation*}
with zero initial data is then given by the formula:
\begin{equation}\label{waveformula}
u(t, x) = \int_{0}^{t} \frac{\sin((t - s)\sqrt{-\Delta})}{\sqrt{-\Delta}} F(s, \cdot) ds = \int_{0}^{t} K^{W}_{t - s}(\cdot) * F(s, \cdot) ds = \int_{0}^{t} \int_{\mathbb{R}^{n}} K^{W}_{t - s}(x - y) F(s, y) dy ds.
\end{equation}

It is important to note that unlike the viscous wave and heat equation, the kernel $K_{t}(x)$ is no longer necessarily function-valued. In fact, we have, for example, the following well-known formulas for the kernel $K_{t}(x)$, giving the fundamental solution for the wave equation:
\begin{equation}\label{wavefundamental}
K^{W}_{t}(x) = 
\left\{
\begin{array}{ll}
\displaystyle{\frac{1}{2} 1_{|x| < t}} \qquad &\text{ for } n = 1,
\\
\displaystyle{\frac{1}{\sqrt{2\pi}} \frac{1}{\sqrt{t^{2} - |x|^{2}}} 1_{|x| < t}} \qquad &\text{ for } n = 2, 
\\
\displaystyle{\frac{1}{4\pi} \sigma_{t}(dx)} \qquad &\text{ for } n = 3,
\end{array}
\right.
\end{equation}
where $\sigma_{t}(dx)$ in the last expression denotes the surface measure on the sphere of radius $t$ centered at the origin. There are more complicated formulas for higher dimensions also, but $K^{W}_{t}(x)$ is function-valued only in dimensions one and two. In fact, $K^{W}_{t}(x)$ becomes increasingly singular as the dimension increases. 

%%%%%%%%%%%%%%%%%%%%%%%%%
It is interesting to note that the kernel for the wave equation $K^{W}_{t}$ can be tied to the kernel for the viscous wave equation $K_{t}$ by the following result.

\begin{proposition}\label{kernelformula}
The kernel $K_{t}(x)$ for the viscous wave equation in dimension $n$, defined by \eqref{kernel}, is given by the convolution
\begin{equation*}
K_{t}(x) = c_{n} \int_{\mathbb{R}^{n}} \frac{t}{(t^{2} + 4|x - y|^{2})^{\frac{n + 1}{2}}}\  K^{W}_{\frac{\sqrt{3}}{2}t}(y) dy,
\end{equation*}
where $c_{n}$ is a constant depending only on the dimension $n$. 
\end{proposition}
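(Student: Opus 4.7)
The plan is to view $K_t(x)$ as the inverse Fourier transform of a product and apply the convolution theorem. Recall from \eqref{kernel} that
\begin{equation*}
K_t(x) = \frac{1}{(2\pi)^n}\int_{\R^n} e^{ix\cdot\xi}\, e^{-\frac{|\xi|}{2}t}\,\frac{\sin\!\left(\frac{\sqrt{3}}{2}|\xi|t\right)}{\frac{\sqrt{3}}{2}|\xi|}\, d\xi.
\end{equation*}
The symbol factors cleanly into two pieces: the isotropic exponential $e^{-|\xi|t/2}$, and the wave-equation symbol $\sin(\tfrac{\sqrt{3}}{2}|\xi|t)/|\xi|$ (up to the scalar $2/\sqrt{3}$). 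First I would identify the inverse Fourier transforms of each factor separately.

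For the second factor, comparison with the definition \eqref{kernelwave} of $K^W_s(x)$ with $s=\tfrac{\sqrt 3}{2}t$ gives
\begin{equation*}
\mathcal F^{-1}\!\left[\frac{\sin\!\left(\frac{\sqrt{3}}{2}|\xi|t\right)}{\frac{\sqrt{3}}{2}|\xi|}\right](x) \;=\; \frac{2}{\sqrt 3}\, K^W_{\frac{\sqrt 3}{2}t}(x).
\end{equation*}
For the first factor, I would invoke the standard Fourier representation of the Poisson kernel for the upper half-space: for any $a>0$,
\begin{equation*}
\mathcal F^{-1}\!\left[e^{-a|\xi|}\right](x) \;=\; \tilde c_n\,\frac{a}{(a^2+|x|^2)^{(n+1)/2}},
\end{equation*}
with an explicit dimensional constant $\tilde c_n$. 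Setting $a=t/2$ and factoring the $2^{n+1}$ out of the denominator yields
\begin{equation*}
\mathcal F^{-1}\!\left[e^{-|\xi|t/2}\right](x) \;=\; \tilde c_n'\,\frac{t}{(t^2+4|x|^2)^{(n+1)/2}}.
\end{equation*}

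With both inverse transforms in hand, the convolution theorem (applied to tempered distributions, which is valid here since the Poisson kernel is a Schwartz-class convolutor and the wave kernel is a compactly supported tempered distribution) gives
\begin{equation*}
K_t(x) \;=\; \tilde c_n' \cdot \frac{2}{\sqrt 3}\int_{\R^n}\frac{t}{(t^2+4|x-y|^2)^{(n+1)/2}}\,K^W_{\frac{\sqrt 3}{2}t}(y)\,dy,
\end{equation*}
which is the claimed formula with $c_n := \frac{2}{\sqrt 3}\,\tilde c_n'$.

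There is no real obstacle here beyond bookkeeping of constants; the mildly delicate point is that in dimensions $n\ge 3$ the wave kernel $K^W_s$ is only a distribution (see \eqref{wavefundamental}), so the integral must be read as the pairing of the distribution $K^W_{\frac{\sqrt 3}{2}t}$ with the Schwartz function $y\mapsto t\,(t^2+4|x-y|^2)^{-(n+1)/2}$. In the dimensions $n=1,2$ that are the focus of this paper, both kernels are bona fide $L^1_{\rm loc}$ functions and the integral is literal.
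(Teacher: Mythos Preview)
Your proof is correct and follows essentially the same route as the paper: factor the symbol as $e^{-|\xi|t/2}$ times the wave symbol, identify each inverse Fourier transform (Poisson kernel and $\tfrac{2}{\sqrt3}K^W_{\frac{\sqrt3}{2}t}$), and apply the convolution theorem. Your added remark on the distributional reading of the integral in dimensions $n\ge 3$ is a nice touch that the paper leaves implicit.
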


\begin{proof}
We use formula \eqref{kernel} and recall that the Fourier transform interchanges multiplication and convolutions. The inverse Fourier transform of $e^{-\frac{|\xi|}{2}t}$ is
\begin{equation*}
\frac{1}{(2\pi)^{n}} \int_{\mathbb{R}^{n}} e^{ix\cdot \xi} e^{-\frac{|\xi|}{2}t} d\xi = \tilde c_{n} \frac{t}{(t^{2} + 4|x|^{2})^{\frac{n + 1}{2}}},
\end{equation*}
where $\tilde c_{n}$ depends only on $n$. From the definition of $K_t^{W}$, we get
\begin{equation}\label{explicitkernel}
\frac{1}{(2\pi)^{n}} \int_{\mathbb{R}^{n}} e^{ix\cdot \xi} \frac{\sin\left(\frac{\sqrt{3}}{2}|\xi|t\right)}{\frac{\sqrt{3}}{2}|\xi|} d\xi = \frac{2}{\sqrt{3}} K^{W}_{\frac{\sqrt{3}}{2}t}(x).
\end{equation}
The result then follows by using the fact that the Fourier transform interchanges multiplication and convolution, where we replaced the constant $\frac{2}{\sqrt{3}} \tilde c_{n}$ by $c_{n}$.
\end{proof}
%{This will be used later in Section \ref{WellPosed} when we compare the behaviors of the stochastic viscous wave equation and the stochastic wave equation.} 

%%%%%%%%%%%%%%%%%%%

We have so far considered the inhomogeneous viscous wave equation with zero initial data. However, we will consider eventually the stochastic form of this equation with continuous bounded initial data, and will hence have to consider the full form of the solution given in \eqref{Fouriersoln2}, which takes into account the possibility of nonzero initial displacement and velocity. Note that the convolution kernel $K(x)$ defined in \eqref{kernel} can be used to describe the effect of an inhomogeneous source term and an initial velocity, as seen in \eqref{Fouriersoln2}. However, the kernel $K(x)$ does not describe the effect of an initial displacement $g$ on the solution. For this reason, we introduce the corresponding convolution kernel $J_{t}(x)$ and the respective unit scale kernel $J(x)$ associated to the propagation of $g(x)$ in \eqref{Fouriersoln2}:
\begin{equation}\label{Jkernel}
J_{t}(x) = \frac{1}{(2\pi)^{n}} \int_{\mathbb{R}^{n}} e^{ix \cdot \xi} e^{-\frac{|\xi|}{2}t}\left(\cos\left(\frac{\sqrt{3}}{2}|\xi| t\right) + \frac{1}{\sqrt{3}} \sin\left(\frac{\sqrt{3}}{2}|\xi|t\right)\right) d\xi,
\end{equation}
\begin{equation}\label{Junitkernel}
J(x) = \frac{1}{(2\pi)^{n}} \int_{\mathbb{R}^{n}} e^{ix \cdot \xi} e^{-\frac{|\xi|}{2}}\left(\cos\left(\frac{\sqrt{3}}{2}|\xi|\right) + \frac{1}{\sqrt{3}} \sin\left(\frac{\sqrt{3}}{2}|\xi| \right)\right) d\xi.
\end{equation}
A change of variables shows that
\begin{equation}\label{Jscale}
J_{t}(x) = t^{-n} J\left(\frac{x}{t}\right).
\end{equation}
We can then write the representation formula \eqref{Fouriersoln2} for the solution of the general viscous wave equation with nonzero initial data $u(0, x) = g(x)$ and $\partial_{t}u(0, x) = h(x)$ and inhomogeneous source term $F$, as 
\begin{equation}\label{generalviscous}
u(t, x) = \int_{\mathbb{R}^{n}} J_t(x - y) g(y) dy + \int_{\mathbb{R}^{n}} K_t(x - y) h(y) dy + \int_{0}^{t} \int_{\mathbb{R}^{n}} K_{t-s}(x - y) F(s, y) ds dy.
\end{equation}

In analogy to Lemma \ref{kernelLq}, one can show the following lemma, which shows that the unit scale kernel $J(x)$ has strong integrability properties. 

\begin{lemma}\label{Jlemma}
For all dimensions $n$, the kernel $J(x)$ is in $L^{q}(\mathbb{R}^{n})$ for all $1 \le q \le \infty$. Furthermore, we have the estimate, 
\begin{equation*}
|J(x)| \le C_{N} |x|^{-1 - n\left(\frac{N - 1}{N}\right)},
\end{equation*}
for any $N \ge n + 1$, where $C_{N}$ is a constant depending on $N$. 
\end{lemma}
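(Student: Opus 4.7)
The plan is to prove the two assertions separately: the uniform bound that yields $J \in L^\infty$, and a polynomial decay estimate that, combined with boundedness, delivers $J \in L^q$ for $1 \le q < \infty$. Following the template used for Lemma \ref{kernelLq}, the decay estimate will be obtained by a cutoff-plus-integration-by-parts argument in the Fourier variable, with a careful balancing of a low- and a high-frequency scale.

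For the uniform bound, the Fourier symbol
\[
m_J(\xi) := e^{-|\xi|/2}\Big(\cos\Big(\tfrac{\sqrt{3}}{2}|\xi|\Big)+\tfrac{1}{\sqrt{3}}\sin\Big(\tfrac{\sqrt{3}}{2}|\xi|\Big)\Big)
\]
satisfies $|m_J(\xi)|\le (1+1/\sqrt{3})e^{-|\xi|/2}$, so $m_J \in L^1(\mathbb{R}^n)$ and the trivial Fourier bound gives $\|J\|_{L^\infty}\le (2\pi)^{-n}\|m_J\|_{L^1}<\infty$. For the pointwise decay, fix $N\ge n+1$ and a smooth radial cutoff $\chi\in C_c^\infty(\mathbb{R}^n)$ with $\chi\equiv 1$ on $B_{1/2}$, $\chi\equiv 0$ outside $B_1$. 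For a scale $\lambda\in(0,1]$ set $\chi_\lambda(\xi)=\chi(\xi/\lambda)$ and split $J(x) = A_\lambda(x)+B_\lambda(x)$, with $A_\lambda$ carrying the factor $\chi_\lambda$ and $B_\lambda$ the factor $1-\chi_\lambda$. The low-frequency piece obeys $|A_\lambda(x)|\le C\lambda^n$ by bounding $|m_J|$ and integrating over a ball of radius $\lambda$. For $B_\lambda$, after rotating coordinates so that some axis $\xi_j$ satisfies $|x_j|\ge |x|/\sqrt{n}$, I integrate by parts $N$ times using $e^{ix\cdot\xi}=(ix_j)^{-N}\partial_{\xi_j}^N e^{ix\cdot\xi}$, obtaining
\[
|B_\lambda(x)| \le C\,|x|^{-N}\int_{\mathbb{R}^n}\bigl|\partial_{\xi_j}^N\bigl[m_J(\xi)(1-\chi_\lambda(\xi))\bigr]\bigr|\,d\xi.
\]
Expanding via Leibniz and using smoothness and exponential decay of $m_J(\xi)=\phi(|\xi|)$, the elementary bound $\bigl|\partial^\alpha |\xi|\bigr|\le C_\alpha|\xi|^{1-|\alpha|}$, and $|\partial^k\chi_\lambda|\le C_k\lambda^{-k}$ localized to the annulus $\{\lambda/2\le|\xi|\le\lambda\}$, one arrives at an estimate of the form $|B_\lambda(x)|\le C_N\,|x|^{-N}\lambda^{-a(n,N)}$ for an explicit exponent $a(n,N)>0$; the threshold $N\ge n+1$ is exactly what is needed for the resulting derivative integrals on $\{|\xi|\ge\lambda/2\}$ to remain controlled as $\lambda\to 0$. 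Optimizing $\lambda$ to balance $\lambda^n$ against $|x|^{-N}\lambda^{-a(n,N)}$ then produces the claimed pointwise bound $|J(x)|\le C_N|x|^{-1-n(N-1)/N}$.

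Finally, applying the pointwise estimate with $N=n+1$ gives $|J(x)|\le C|x|^{-n-1/(n+1)}$ for $|x|\ge 1$, a rate strictly faster than $|x|^{-n}$ and hence integrable at infinity; together with the uniform bound on $|x|\le 1$, this shows $J \in L^1(\mathbb{R}^n)$. Combined with $J \in L^\infty(\mathbb{R}^n)$, the elementary estimate $\|J\|_{L^q}^q \le \|J\|_{L^\infty}^{q-1}\|J\|_{L^1}$ yields $J\in L^q(\mathbb{R}^n)$ for every $1\le q\le\infty$. The main technical obstacle will be the derivative bookkeeping in the decay step, namely identifying the exponent $a(n,N)$ produced by the Leibniz expansion: one must weigh the contribution of derivatives falling on the cutoff (concentrated on $|\xi|\sim\lambda$, each contributing $\lambda^{-1}$) against those falling on $\phi(|\xi|)$ (whose singularity at the origin is precisely what forces the restriction $N\ge n+1$), and track both uniformly so as to extract the precise power $1+n(N-1)/N$ after optimization.
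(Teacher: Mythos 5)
Your overall architecture---bound $\|J\|_{L^\infty}$ by $\|m_J\|_{L^1}$, prove a polynomial pointwise decay estimate by a frequency cutoff at scale $\lambda$ plus $N$-fold integration by parts, optimize in $\lambda$, and then deduce $L^q$ for all $q$ from $L^1\cap L^\infty$---is the same ``repeated integration by parts'' strategy the paper invokes; the paper gives no details here, deferring entirely to Lemma 3.3 of \cite{KuanCanicNVWE} and asserting that the proof for $J$ is a slight modification of the one for $K$. Your $L^\infty$ bound, and the final step (taking $N=n+1$ to get decay of order $n+\tfrac{1}{n+1}>n$, hence $J\in L^1$, hence $L^q$) are correct.

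The gap sits at the crux you defer. You never compute $a(n,N)$, and the most natural bookkeeping of the scheme you describe does not yield the claimed exponent. In the Leibniz expansion of $\partial_{\xi_j}^{N}[m_J(1-\chi_\lambda)]$, the dominant term is the one where all $N$ derivatives fall on the cutoff: it is supported on the annulus $\{\lambda/2\le|\xi|\le\lambda\}$ of volume $\sim\lambda^{n}$ and has size $\sim\lambda^{-N}$ there, so its contribution to the $\xi$-integral is $\sim\lambda^{n-N}$, i.e.\ $a(n,N)=N-n$. Balancing $\lambda^{n}$ against $|x|^{-N}\lambda^{n-N}$ forces $\lambda=|x|^{-1}$ and gives only $|J(x)|\lesssim |x|^{-n}$, which is neither the claimed rate $|x|^{-1-n(N-1)/N}$ nor integrable at infinity---so the $L^{1}$, and hence $L^{q}$ for $q<\infty$, conclusion would fail. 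Extracting the stated exponent requires structure your sketch does not use: for example, the cancellation $\phi'(0)=0$ for this particular radial symbol $m_J(\xi)=\phi(|\xi|)$ (which upgrades $|\partial^{\alpha}m_J|\lesssim|\xi|^{1-|\alpha|}$ to $|\xi|^{2-|\alpha|}$ and makes $|m_J-1|\lesssim|\xi|^2$ on the low-frequency ball), together with a separate exact treatment of the pure cutoff contribution $\widehat{\chi_\lambda}(x)=\lambda^{n}\widehat{\chi}(\lambda x)$, which decays faster than any power of $\lambda|x|$ and so does not spoil the balance. Relatedly, your explanation of the threshold $N\ge n+1$ is backwards: increasing $N$ makes the near-origin integrals $\int_{|\xi|\ge\lambda/2}|\xi|^{1-N}\,d\xi$ more singular, not better controlled; the condition is there so that the final decay exponent $1+n(N-1)/N$ exceeds $n$ and the tail is integrable. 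As written, the proposal postpones exactly the computation on which the lemma stands, and the version of that computation it suggests does not close.
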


\begin{proof}
We refer the reader to the proof of Lemma 3.3 in \cite{KuanCanicNVWE}. While the proof there is for the slightly different unit kernel $K(x)$, the corresponding proof for $J(x)$ is just a slight modification of the proof given there. 
\end{proof}

Finally, we establish a final lemma in this section, which shows the effect of the viscous wave operator on continuous functions $u(0, x) = g(x)$ and $\partial_{t}u(0, x) = h(x)$ that are both in $H^{2}(\mathbb{R}^{n})$. This will be useful when showing existence and uniqueness of a mild solution to the stochastic viscous wave equation with {{continuous initial data in $H^{2}(\mathbb{R}^{n})$}}, see Section \ref{WellPosed}. 

\begin{lemma}\label{boundedcontinuous}
Let $n = 1$ or $n = 2$, and let $g \in H^{2}(\mathbb{R}^{n})$ and $h \in H^{2}(\mathbb{R}^{n})$ be continuous functions on $\mathbb{R}^{n}$. Then, for any positive time $T > 0$, the solution to 
\begin{equation*}
u_{tt} + \sqrt{-\Delta} u_{t} - \Delta u = 0,
\end{equation*}
with initial data
\begin{equation*}
u(0, x) = g(x), \qquad \partial_{t}u(0, x) = h(x),
\end{equation*}
is a bounded, continuous function on $[0, T] \times \mathbb{R}^{n}$. Furthermore, the solution $u(t, x)$ has the following H\"{o}lder continuity properties depending on the dimension $n$:
\begin{itemize}
\item If $n = 1$, then for every $\rho \in [0, 1]$, there exists a constant $C_{\rho, T}$ depending only on $\rho$ and $T$ such that for all $t, t' \in [0, T]$, $x, x' \in \mathbb{R}$, 
\begin{equation*}
|u(t, x) - u(t, x')| \le C_{\rho, T} |x - x'|^{\rho}, \qquad |u(t, x) - u(t', x)| \le C_{\rho, T} |t - t'|^{\rho}.
\end{equation*}
\item If $n = 2$, then for every $\rho \in [0, 1)$, there exists a constant $C_{\rho, T}$ depending only on $\rho$ and $T$ such that for all $t, t' \in [0, T]$, $x, x' \in \mathbb{R}^{2}$, 
\begin{equation*}
|u(t, x) - u(t, x')| \le C_{\rho, T} |x - x'|^{\rho}, \qquad |u(t, x) - u(t', x)| \le C_{\rho, T} |t - t'|^{\rho/2}.
\end{equation*}
\end{itemize}
\end{lemma}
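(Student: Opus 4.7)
The plan is to exploit the explicit Fourier representation \eqref{FTv}. Write
$$\widehat u(t,\xi) = m_g(t,\xi)\,\widehat g(\xi) + m_h(t,\xi)\,\widehat h(\xi),$$
where
$$m_g(t,\xi) = e^{-|\xi|t/2}\Bigl(\cos\bigl(\tfrac{\sqrt 3}{2}|\xi|t\bigr) + \tfrac{1}{\sqrt 3}\sin\bigl(\tfrac{\sqrt 3}{2}|\xi|t\bigr)\Bigr), \qquad m_h(t,\xi) = e^{-|\xi|t/2}\frac{\sin\bigl(\tfrac{\sqrt 3}{2}|\xi|t\bigr)}{\tfrac{\sqrt 3}{2}|\xi|}.$$
Elementary trigonometric/exponential estimates yield, uniformly for $t\in[0,T]$ and $\xi\in\mathbb{R}^n$, the pointwise bounds $|m_g(t,\xi)|\le C$, $|m_h(t,\xi)|\le C\min(t,|\xi|^{-1})$, $|\partial_t m_g(t,\xi)|\le C|\xi|$, and $|\partial_t m_h(t,\xi)|\le C$, where the constants depend only on $T$.

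For boundedness and continuity, the key observation is that because $g,h\in H^2(\mathbb{R}^n)$ with $n\le 2$, Cauchy--Schwarz with the weight $(1+|\xi|^2)^{-1}$ forces $\widehat g,\widehat h\in L^1(\mathbb{R}^n)$, since $\int_{\mathbb{R}^n}(1+|\xi|^2)^{-2}\,d\xi<\infty$ in these dimensions. By Fourier inversion $|u(t,x)|\le (2\pi)^{-n}\int|\widehat u(t,\xi)|\,d\xi$ is then dominated uniformly on $[0,T]\times\mathbb{R}^n$ by a multiple of $\|g\|_{H^2}+T\|h\|_{H^2}$, and joint continuity in $(t,x)$ follows from dominated convergence against the $L^1$ dominator $|\widehat g|+T|\widehat h|$.

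For the H\"older estimates, I would split
$$u(t,x)-u(t',x') = \frac{1}{(2\pi)^n}\int e^{ix\cdot\xi}\bigl(\widehat u(t,\xi)-\widehat u(t',\xi)\bigr)d\xi + \frac{1}{(2\pi)^n}\int \bigl(e^{ix\cdot\xi}-e^{ix'\cdot\xi}\bigr)\widehat u(t',\xi)\,d\xi.$$
The spatial term is handled via the standard interpolation $|e^{ix\cdot\xi}-e^{ix'\cdot\xi}|\le 2(|\xi||x-x'|)^\rho$ for $\rho\in[0,1]$, reducing it to $|x-x'|^\rho\int(|\widehat g|+|\widehat h|)|\xi|^\rho\,d\xi$. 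The temporal term is handled by interpolating the uniform bounds on $m_g, m_h$ against the derivative bounds listed above to obtain $|m_{g,h}(t,\xi)-m_{g,h}(t',\xi)|\le C(1+|\xi|)^\rho |t-t'|^\rho$, producing an integral of the same form. In every case, Cauchy--Schwarz with $g,h\in H^2$ collapses the question to the finiteness of $\bigl\||\xi|^\rho(1+|\xi|^2)^{-1}\bigr\|_{L^2(\mathbb{R}^n)}$, and a polar-coordinate computation shows this norm is finite for $\rho\in[0,1]$ when $n=1$ and for $\rho\in[0,1)$ when $n=2$.

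The main obstacle is precisely this borderline integrability in two dimensions: $|\xi|^\rho(1+|\xi|^2)^{-1}$ sits in $L^2(\mathbb{R}^2)$ only for $\rho<1$, failing logarithmically at $\rho=1$, which pins down the sharp spatial exponent in the statement and anticipates the H\"older exponent $\alpha<1/2$ of the stochastic mild solution in dimension two. The time exponent $\rho/2$ claimed for $n=2$ is in fact a weaker corollary of the $\rho$-bound that the argument above already delivers for every $\rho\in[0,1)$; recording the temporal regularity in this parabolically scaled form is the one that will be convenient when this deterministic lemma is combined with the stochastic estimates in Section \ref{WellPosed}.
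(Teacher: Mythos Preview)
Your argument is correct and takes a genuinely different route from the paper's. The paper works in physical space: it invokes the kernel lemmas (Lemma~\ref{kernelLq} and Lemma~\ref{Jlemma}) to bound $\|J_t\|_{L^1}$ and $\|K_t\|_{L^1}$, transfers the Sobolev-embedding H\"older regularity of $g,h$ through the convolution for the spatial increment, and then, for the temporal increment in $n=2$, uses a semigroup restart at time $t'$, a near/far splitting of the $J_{t-t'}$ integral, the pointwise decay of $J$, and the embedding $H^1(\mathbb{R}^2)\hookrightarrow L^q$ applied to $\partial_t u$. Your approach bypasses all of this kernel machinery: you stay on the Fourier side, use only the elementary multiplier bounds $|m_g|\le C$, $|\partial_t m_g|\le C|\xi|$, $|m_h|\le C\min(t,|\xi|^{-1})$, $|\partial_t m_h|\le C$, and reduce every estimate to a single Cauchy--Schwarz computation of $\bigl\||\xi|^\rho(1+|\xi|^2)^{-1}\bigr\|_{L^2(\mathbb{R}^n)}$.

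What this buys you is a shorter, more uniform proof that treats both dimensions and both increments identically, and in fact yields the \emph{stronger} temporal bound $|u(t,x)-u(t',x)|\le C_{\rho,T}|t-t'|^{\rho}$ for all $\rho\in[0,1)$ in $n=2$ (you are right that the stated $\rho/2$ is then a weaker corollary on $[0,T]$). What the paper's approach buys is that it exercises the kernel estimates of Lemmas~\ref{kernelLq} and~\ref{Jlemma}, which are needed independently for the stochastic analysis in Section~\ref{WellPosed}, and it makes the role of the scaling relations \eqref{viscouskernelscale} and \eqref{Jscale} visible already at this deterministic stage.
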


\begin{proof}
First we show that $u(t, x)$ is bounded. By Lemma \ref{kernelLq} and Lemma \ref{Jlemma}, $J, K \in L^{1}(\mathbb{R}^{n})$. Therefore, by using the scaling relations \eqref{Jscale} and \eqref{viscouskernelscale}, we have that for $t \in [0, T]$,
\begin{equation}\label{JL1}
\int_{\mathbb{R}^{n}} |J_{t}(x)| dx = t^{-n}\int_{\mathbb{R}^{n}} \left|J\left(\frac{x}{t}\right)\right| dx = ||J||_{L^{1}} < \infty,
\end{equation}
\begin{equation}\label{KL1}
\int_{\mathbb{R}^{n}} |K_{t}(x)| dx = t^{1 - n} \int_{\mathbb{R}^{n}} \left|K\left(\frac{x}{t}\right)\right| dx = t ||K||_{L^{1}} \le T ||K||_{L^{1}} < \infty.
\end{equation}
Using these facts along with the fact that $g, h$ are bounded (by Sobolev embedding since they are in $H^{2}(\mathbb{R}^{n})$),
the explicit formula
\begin{equation}\label{lemmasol} 
u(t, x) = \int_{\mathbb{R}^{n}} J_{t}(y) g(x - y) dy + \int_{\mathbb{R}^{n}} K_{t}(y) h(x - y) dy 
\end{equation}
implies that $u(t, x)$ is bounded on $[0, T] \times \mathbb{R}^{n}$. 

Next, we establish continuity. First, we consider spatial increments. Since $g, h \in H^{2}(\mathbb{R}^{n})$ are continuous, they are $\rho$-H\"{o}lder continuous for $\rho \in [0, 1)$ by Sobolev embedding, and in fact also Lipschitz continuous in dimension one. Then, for $x, x' \in \mathbb{R}^{n}$ and $t > 0$,
\begin{align}\label{spatialestimate}
|u(t, x) - u(t, x')| &\le \int_{\mathbb{R}^{n}} |J_{t}(y)| \cdot |g(x - y) - g(x' - y)| dy + \int_{\mathbb{R}^{n}} |K_{t}(y)| \cdot |h(x - y) - h(x' - y)| dy \nonumber \\
&\le C_{\rho} |x - x'|^{\rho} \left(\int_{\mathbb{R}^{n}} |J_{t}(y)| dy + \int_{\mathbb{R}^{n}} |K_{t}(y)| dy\right) \le C_{\rho, T} |x - x'|^{\rho},
\end{align}
where $\rho = [0, 1]$ if $n = 1$, and $\rho \in [0, 1)$ if $n = 2$, with $C_{\rho}$ depending on $\rho$. In particular, the Lipschitz or H\"{o}lder continuity of the initial data is propagated in time on a finite time interval. We have also shown that $u(t, x)$ at each fixed time is a continuous function. 

Next, we consider time increments. Consider $0 \le t' < t \le T$. We want to estimate  the quantity $|u(t, x) - u(t', x)|$ for arbitrary $x \in \mathbb{R}^{n}$. We consider the two cases of $n = 1$ and $n = 2$ separately. 

\vspace{0.1in}

\noindent \textbf{Case 1:} If $n = 1$, then we have that $(u, u_{t}) \in C([0, T]; H^{2}(\mathbb{R})) \times C([0, T]; H^{1}(\mathbb{R}))$. Since $H^{1}(\mathbb{R})$ embeds continuously into the bounded continuous functions on $\mathbb{R}$, $u_{t}$ is bounded and continuous on $[0, T] \times \mathbb{R}$. Hence, $|u_{t}(t, x)| \le C_{T}$, $\forall x \in \mathbb{R}$ and $t \in [0, T]$. By the fundamental theorem of calculus,
\begin{equation}\label{dim1time}
|u(t, x) - u(t', x)| \le \int_{t'}^{t} |u_{t}(s, x)| ds \le C_{T} |t - t'|.
\end{equation}

%\vspace{0.1in}

\noindent  \textbf{Case 2:} If $n = 2$, by uniqueness of the solution in $C([0, T]; H^{2}(\mathbb{R}^{2})) \times C([0, T]; H^{1}(\mathbb{R}^{2}))$,
we can consider $u(t', x)$ and $\partial_{t}u(t', x)$ as initial data at time $t'$, to get
\begin{equation*}
|u(t, x) - u(t', x)| = \left|\int_{\mathbb{R}^{n}} J_{t - t'}(y) u(t', x - y) dy - u(t', x) + \int_{\mathbb{R}^{n}} K_{t - t'}(y) \partial_{t}u(t', x - y) dy\right|.
\end{equation*}
Since
$
\int_{\mathbb{R}^{n}} J_{t}(y) dy = \widehat{J_{t}}(\xi = 0) = 1,
$
the following estimate holds:
\begin{align}\label{timeestimate}
|u(t, x) - u(t', x)| &\le \int_{\mathbb{R}^{n}} |J_{t - t'}(y)| \cdot |u(t', x - y) - u(t', x)| dy + \int_{\mathbb{R}^{n}} |K_{t - t'}(y)| \cdot |\partial_{t}u(t', x - y)| dy \nonumber \\
&:= I_{1} + I_{2}.
\end{align}
To complete the estimate, we first consider integral $I_{1}$. {{We break up the integral $I_{1}$ into two parts,}}
\begin{align}
I_{1} &= \int_{|y| \le |t - t'|^{1/2}} |J_{t - t'}(y)| \cdot |u(t', x - y) - u(t', x)| dy + \int_{|y| > |t - t'|^{1/2}} |J_{t - t'}(y)| \cdot |u(t', x - y) - u(t', x)| dy 
\nonumber
\\
&= I_{1, 1} + I_{1, 2}.
\label{splitI1}
\end{align}
Using the H\"{o}lder continuity in space from \eqref{spatialestimate}, and using the estimate \eqref{JL1}, we get {{for $\rho \in [0, 1)$,}}
\begin{equation}\label{I11}
I_{1, 1} \le C_{\rho, T} \int_{|y| \le |t - t'|^{1/2}} |J_{t - t'}(y)| \cdot |y|^{\rho} dy \le C_{\rho, T} |t - t'|^{\rho/2} \int_{|y| \le |t - t'|^{1/2}} |J_{t - t'}(y)| dy \le C_{\rho, T} |t - t'|^{\rho/2}.
\end{equation}
To estimate $I_{1, 2}$, we recall that 
we already showed that $u(t, x)$ is bounded on $[0, T] \times \mathbb{R}^{n}$ by some constant $M_{T}$. 
Therefore, by the scaling relation \eqref{Jscale}, and by using a change of variables, we get
\begin{align}\label{I12first}
I_{1, 2} &\le 2M_{T} \int_{|y| > |t - t'|^{1/2}} |J_{t - t'}(y)| dy = 2M_{T} (t - t')^{-n} \int_{|y| > |t - t'|^{1/2}} \left|J\left(\frac{y}{t - t'}\right)\right| dy \nonumber \\
&= 2M_{T} \int_{|z| > |t - t'|^{-1/2}} |J(z)| dz.
\end{align}
To estimate the last integral, we recall the  estimate stated in Lemma \ref{Jlemma}
%\begin{equation*}
%|J(x)| \le C_{N} |x|^{-1 - n\left(\frac{N - 1}{N}\right)}, \qquad \text{ for } N \ge n + 1,
%\end{equation*}
and choose an $N$ in that estimate (which depends on $\rho$) $N_{\rho} \ge n + 1$, sufficiently large so that 
\begin{equation}\label{Nrho}
1 + n\left(\frac{N_{\rho} - 1}{N_{\rho}}\right) \ge n + \rho,\ \ {\rm or\ equvalently}\  \ \rho \le 1 - \frac{n}{N_{\rho}},
\end{equation}
for arbitrary $\rho \in [0, 1)$. 
%This condition  is equivalent to
%\begin{equation}\label{Nrhoequiv}
%\rho \le 1 - \frac{n}{N_{\rho}}.
%\end{equation}
Then, continuing from \eqref{I12first} and switching to polar coordinates,  the estimate from Lemma \ref{Jlemma}, together with the inequality  \eqref{Nrho}, imply
{{\begin{align}\label{I12}
I_{1, 2} \le 2M_{T}{{C_{N_\rho}}} \int_{|z| > |t - t'|^{-1/2}} \frac{1}{|z|^{1 + n\left(\frac{N_{\rho} - 1}{N_{\rho}}\right)}} dz = 2M_{T}C_{N_\rho}|\mathbb{S}^{n - 1}|  \int_{|t - t'|^{-1/2}}^{\infty} r^{-1 - n\left(\frac{N_{\rho} - 1}{N_{\rho}}\right)} r^{n - 1} dr \nonumber \\
= 2M_{T}C_{N_{\rho}} |\mathbb{S}^{n - 1}| \int_{|t - t'|^{-1/2}}^{\infty} r^{-2 + \frac{n}{N_{\rho}}} dr = C_{\rho, T} |t - t'|^{\frac{1}{2}\left(1 - \frac{n}{N_{\rho}}\right)} \le C_{\rho, T} |t - t'|^{\rho/2},
\end{align}
for $\rho \in [0, 1)$, where $C_{N_{\rho}}$ denotes the constant for $N = N_{\rho}$ in the inequality in Lemma \ref{Jlemma}.}} In the last inequality, we used the fact that $t', t$ belong to a bounded interval $[0, T]$,
 and in the last step, with a slight abuse of notation, we used the same notation
 for the constant $C_{T, \rho}$. 
 
 {{Finally, we estimate
 \begin{equation*}
 I_{2} := \int_{\mathbb{R}^{n}} |K_{t - t'}(y)| \cdot |\partial_{t}u(t', x - y)| dy.
 \end{equation*}
 Since $(u, \partial_{t}u) \in C([0, T]; H^{2}(\mathbb{R}^{n})) \times C([0, T]; H^{1}(\mathbb{R}^{n}))$, we have that $\partial_{t}u(t, \cdot)$ is uniformly bounded in $H^{1}(\mathbb{R}^{n})$ for $t \in [0, T]$. 
 We note that for $n = 2$, $H^{1}(\mathbb{R}^{n})$ embeds into $L^{q}(\mathbb{R}^{n})$ for all $2 \le q < \infty$. This is because for general dimension $n$, if a function $f \in H^{n/2}(\mathbb{R}^{n})$, we can show that for all $1 < p \le 2$, $\widehat{f} \in L^{p}(\mathbb{R}^{n})$, which implies the result by the Hausdorff-Young inequality. Using H\"{o}lder's inequality with the conjugate exponents $2/p$ and $2/(2 - p)$, one can compute:
\begin{multline*}
\int_{\mathbb{R}^{n}} |\widehat{f}(\xi)|^{p} d\xi = \int_{\mathbb{R}^{n}} (1 + |\xi|^{2})^{-np/4} (1 + |\xi|^{2})^{np/4} |\widehat{f}(\xi)|^{p} d\xi \\
\le \left(\int_{\mathbb{R}^{n}} (1 + |\xi|^{2})^{-\frac{np}{2(2 - p)}} d\xi\right)^{\frac{2 - p}{2}} \cdot ||f||_{H^{n/2}(\mathbb{R}^{n})}^{p} = C_{p} ||f||_{H^{n/2}(\mathbb{R}^{n})}^{p},
\end{multline*}
since $\frac{p}{2 - p} > 1$ for $1 < p \le 2$. Hence, for $q$ such that $2 \le q < \infty$,  by the Hausdorff-Young inequality,
we have  that for conjugate exponents $p$ and $q$, 
\begin{equation*}
||f||_{L^{q}(\mathbb{R}^{n})} \le ||\widehat{f}||_{L^{p}(\mathbb{R}^{n})} \le C_{p}||f||_{H^{n/2}(\mathbb{R}^{n})},
\end{equation*}
so that $H^{1}(\mathbb{R}^{2})$ embeds continuously into $L^{q}(\mathbb{R}^{2})$ for $2 \le q < \infty$. 

Hence, since $\partial_{t}u(t, \cdot)$ is uniformly bounded in $H^{1}(\mathbb{R}^{n})$ on $t \in [0, T]$, we have that
\begin{equation}\label{dtuLq}
\sup_{0 \le t \le T} ||\partial_{t}u(t, \cdot)||_{L^{q}(\mathbb{R}^{2})} \le C_{T, q}, \qquad \text{ for } 2 \le q < \infty.
\end{equation}
In addition, we use the scaling property for the kernel $K_{t}(x)$ given by \eqref{viscouskernelscale} to deduce that
\begin{align}
||K_{t}(x)||_{L^{p}(\mathbb{R}^{n})} &= t^{1 - n} \left(\int_{\mathbb{R}^{n}} \left|K\left(\frac{x}{t}\right)\right|^{p} dx\right)^{1/p} = t^{1 - n + \frac{n}{p}} \left(\int_{\mathbb{R}^{n}} |K(y)|^{p} dy\right)^{1/p}
\nonumber \\
&= t^{1 - \frac{n}{q}} ||K||_{L^{p}(\mathbb{R}^{n})} = C_{p} t^{1 - \frac{n}{q}},
\label{KtLp}
\end{align}
{{where $p$ and $q$ are conjugate exponents.}} Therefore,  by \eqref{dtuLq} and \eqref{KtLp}, 
\begin{equation}\label{finalI2}
I_{2} \le \tilde{C}_{T, q} |t - t'|^{1 - \frac{n}{q}}, 
\end{equation}
{{for $2 \le q < \infty$,}} where $n = 2$. By choosing $2 \le q < \infty$ appropriately, this implies the desired estimate
\begin{equation*}
I_{2} \le C_{T, \rho} |t - t'|^{\rho/2}
\end{equation*}
for $\rho \in [0, 1)$. 
 
 By combining \eqref{splitI1}, \eqref{I11}, \eqref{I12}, and \eqref{finalI2}, we get the desired result:
\begin{equation}\label{timeestimate}
|u(t, x) - u(t', x)| \le C_{\rho, T} |t - t'|^{\rho/2},
\end{equation}
for $\rho \in [0, 1)$ in the case $n = 2$.

\vspace{0.2in}

The spatial estimate \eqref{spatialestimate}, and the time estimates \eqref{dim1time} for $n = 1$ and \eqref{timeestimate} for $n = 2$, which are \textit{uniform on $[0, T] \times \mathbb{R}^{n}$}, establish the continuity of $u(t, x)$ on $[0, T] \times \mathbb{R}^{n}$. }}
\end{proof}

\subsection{White noise and stochastic integration}

In this section we review the concept of spacetime white noise on $\mathbb{R}^{+} \times \mathbb{R}^{n}$ and stochastic integration against white noise. This will be used throughout the rest of the manuscript. 
Note that we will use $\mathbb{R}^{+}$ to denote $[0, \infty)$, which represents the time variable. 
While we will be primarily concerned with dimensions $n = 1, 2$, we will define white noise in full generality, as the extension to higher dimensions is no more difficult. 

{{We follow the exposition that can be found in \cite{KMini} about martingale measures and refer the reader to the original reference by Walsh \cite{Walsh} for more details.}} We note that while the forthcoming analysis can be carried out more generally for martingale measures, we will restrict to the case of white noise for simplicity. The full martingale measure theory can be found in \cite{Walsh} and \cite{KMini}. 

Recall that a \textit{Gaussian process} is a process $\{G_{i}\}_{i \in I}$, such that the finite dimensional random vectors
\begin{equation*}
(G_{i_{1}}, G_{i_{2}}, ..., G_{i_{k}}), \ i_{1}, i_{2}, ..., i_{k} \in I
\end{equation*}
have distributions that are multivariable Gaussian, for any finite collection of $i_{1}, i_{2}, ..., i_{k} \in I$.
%
%distributions, which are the distributions of the random vector
%\begin{equation*}
%(G_{i_{1}}, G_{i_{2}}, ..., G_{i_{k}})
%\end{equation*}
%for any finite collection of $i_{1}, i_{2}, ..., i_{k} \in I$, are multivariate Gaussian. We will say that a Gaussian process is \textit{mean zero} if
%\begin{equation*}
%\mathbb{E}(G_{i}) = 0, \qquad \text{ for all } i \in I.
%\end{equation*}

We will define the \textit{covariance function} to be the symmetric function $C: I \times I \to \mathbb{R}$ that gives the covariance of any two Gaussians $G_{i_{1}}$ and $G_{i_{2}}$,
\begin{equation*}
C(i_{1}, i_{2}) = \mathbb{E}[(G_{i_{1}} - \mathbb{E}(G_{i_1}))(G_{i_{2}} - \mathbb{E}(G_{i_{2}}))].
\end{equation*}
For a mean zero Gaussian process, {{which is a Gaussian process $\{G_{i}\}_{i \in I}$ such that $\mathbb{E}[G_{i}] = 0$ for all $i \in I$,}} this reduces to the simpler formula
\begin{equation*}
C(i_{1}, i_{2}) = \mathbb{E}[G_{i_{1}}G_{i_{2}}].
\end{equation*}

We will now define white noise as a Gaussian process, taking for granted the existence of such a process. 
\begin{definition}[White noise on $\mathbb{R}^{+} \times \mathbb{R}^{n}$]
Let $\mathcal{B}(\mathbb{R}^{+} \times \mathbb{R}^{n})$ denote the collection of all Borel subsets of $\mathbb{R}^{+} \times \mathbb{R}^{n}$.
White noise on $\mathbb{R}^{+} \times \mathbb{R}^{n}$ is a \textit{mean zero} Gaussian process $\{\dot{W}(A)\}_{A \in \mathcal{B}(\mathbb{R}^{+} \times \mathbb{R}^{n})}$ indexed by the Borel subsets of $\mathbb{R}^{+} \times \mathbb{R}^{n}$, with the covariance function
\begin{equation}\label{cov}
C(A, B) := \mathbb{E}[\dot{W}(A)\dot{W}(B)] = \lambda(A \cap B), \qquad \text{ for } A, B \in \mathcal{B}(\mathbb{R}^{+} \times \mathbb{R}^{n}),
\end{equation}
where $\lambda$ is Lebesgue measure in $\mathbb{R}^{+} \times \mathbb{R}^{n}$. 
\end{definition}

Some basic facts about white noise that will be useful later are summarized in the following proposition.

\begin{proposition}\label{whitenoiseprop}
Let $\{\dot{W}(A)\}_{A \in \mathcal{B}(\mathbb{R}^{+} \times \mathbb{R}^{n})}$ denote white noise. Then, the following holds true:
\begin{itemize}
\item For each {{bounded set }}$A \in \mathcal{B}(\mathbb{R}^{+} \times \mathbb{R}^{n})$, $\dot{W}(A)$ is normally distributed with mean $0$ and variance $\lambda(A)$, namely $\dot{W}(A) \sim N(0, \lambda(A))$. So $\dot{W}(A) \in L^{2}(\Omega)$, where $\Omega$ is the probability space. 
\item If $A \cap B = \varnothing$, then $\dot{W}(A)$ and $\dot{W}(B)$ are independent.
\item Given $A, B \in \mathcal{B}(\mathbb{R}^{+} \times \mathbb{R}^{n})$, 
$
\dot{W}(A \cup B) = \dot{W}(A) + \dot{W}(B) - \dot{W}(A \cap B),
$
almost surely (a.s.), as random variables.
\item White noise is a signed measure taking values in $L^{2}(\Omega)$, namely $\dot{W}: \mathcal{B}(\mathbb{R}^{+} \times \mathbb{R}^{n}) \to L^{2}(\Omega)$. Furthermore, white noise considered as a measure is $\sigma$-finite. 
\end{itemize}
\end{proposition}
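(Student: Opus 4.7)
The first two bullets follow almost immediately from unpacking the definition of a mean-zero Gaussian process with the specified covariance. For each bounded $A$, the scalar variable $\dot{W}(A)$ is mean-zero Gaussian with variance $C(A,A) = \lambda(A \cap A) = \lambda(A) < \infty$, which simultaneously delivers the distributional claim $\dot{W}(A) \sim N(0,\lambda(A))$ and the fact that $\dot{W}(A) \in L^2(\Omega)$ with $\|\dot{W}(A)\|_{L^2(\Omega)}^2 = \lambda(A)$. For disjoint $A, B \in \mathcal{B}(\mathbb{R}^+ \times \mathbb{R}^n)$, the pair $(\dot{W}(A), \dot{W}(B))$ is jointly Gaussian by hypothesis, and the covariance is $C(A,B) = \lambda(A \cap B) = 0$; the standard fact that uncorrelated jointly Gaussian variables are independent closes the second bullet.

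For the third bullet, the plan is a one-shot $L^2$ calculation. Set
\[
X := \dot{W}(A \cup B) - \dot{W}(A) - \dot{W}(B) + \dot{W}(A \cap B)
\]
and expand $\mathbb{E}[X^2]$ using bilinearity together with $\mathbb{E}[\dot{W}(E)\dot{W}(F)] = \lambda(E \cap F)$. Each of the sixteen resulting terms reduces to a Lebesgue measure of an intersection involving $A \cup B$, $A$, $B$, or $A \cap B$. Combining the four variance terms via $\lambda(A \cup B) = \lambda(A) + \lambda(B) - \lambda(A \cap B)$ and collecting the cross-covariance terms (each of which reduces to $\lambda(A)$, $\lambda(B)$, or $\lambda(A \cap B)$) shows that everything cancels, so $\mathbb{E}[X^2] = 0$ and $X = 0$ almost surely.

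For the fourth bullet, the task splits into countable additivity as an $L^2(\Omega)$-valued set function and $\sigma$-finiteness. Given pairwise disjoint $A_1, A_2, \ldots$ with $\lambda\bigl(\bigcup_n A_n\bigr) < \infty$, iterate the third bullet (using that $\dot{W}(\varnothing) = 0$ a.s., since the first bullet forces $\mathbb{E}[\dot{W}(\varnothing)^2] = \lambda(\varnothing) = 0$) to conclude that the partial sum $S_N := \sum_{n=1}^N \dot{W}(A_n)$ equals $\dot{W}\bigl(\bigcup_{n \le N} A_n\bigr)$ a.s. Applying the third bullet once more to the disjoint decomposition $\bigcup_n A_n = \bigl(\bigcup_{n \le N} A_n\bigr) \cup \bigl(\bigcup_{n > N} A_n\bigr)$ yields $\dot{W}\bigl(\bigcup_n A_n\bigr) - S_N = \dot{W}\bigl(\bigcup_{n > N} A_n\bigr)$ a.s., whose $L^2(\Omega)$-norm equals $\bigl(\sum_{n>N} \lambda(A_n)\bigr)^{1/2} \to 0$ by countable additivity of Lebesgue measure. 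This establishes $L^2$-convergence of $S_N$ to $\dot{W}\bigl(\bigcup_n A_n\bigr)$ and hence the vector-valued countable additivity. For $\sigma$-finiteness, cover $\mathbb{R}^+ \times \mathbb{R}^n$ by the countable family of bounded boxes $[0,k] \times [-k,k]^n$, each of which has finite Lebesgue measure so that $\dot{W}$ is $L^2$-valued on each.

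The main bookkeeping step is the cancellation in bullet three, which is elementary inclusion-exclusion applied to sixteen covariance terms but must be executed carefully; once this is in hand, countable additivity in bullet four is essentially forced by combining finite additivity on disjoint unions with countable additivity of $\lambda$ on the variance side. All other assertions follow directly from the covariance formula \eqref{cov} and the characterization of independence for jointly Gaussian vectors.
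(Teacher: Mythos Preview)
Your proposal is correct and follows essentially the same approach as the paper: the first bullet via the variance computation $\mathbb{E}[\dot{W}(A)^2]=\lambda(A)$, the second via uncorrelated jointly Gaussian implies independent, and the third via expanding $\mathbb{E}[X^2]$ and applying \eqref{cov} repeatedly. For the fourth bullet the paper simply cites Proposition~5.1 in \cite{KMini}, whereas you supply an explicit argument (finite additivity from bullet three, then countable additivity via the tail estimate $\|\dot{W}(\bigcup_{n>N}A_n)\|_{L^2(\Omega)}^2=\sum_{n>N}\lambda(A_n)\to 0$); this is more detailed than what the paper does but entirely in the same spirit.
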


\begin{proof}
The first point follows from the fact that white noise is a mean zero Gaussian process, and
$
\mathbb{E}[(\dot{W}(A))^{2}] = \lambda(A \cap A) = \lambda (A)
$
by \eqref{cov}. The second and third points are from Exercise 3.15 in \cite{KMini}. The second point follows from the fact that $\dot{W}(A)$ and $\dot{W}(B)$ are mean zero Gaussians with zero covariance, by applying \eqref{cov}. The third fact follows from the computation of the expectation
$
\mathbb{E}[(\dot{W}(A \cup B) - \dot{W}(A) - \dot{W}(B) + \dot{W}(A \cap B))^{2}] = 0.
$
One can verify this by expanding the square and applying \eqref{cov} repeatedly. Note that the third property gives the finite additivity properties of a measure. For the final property, one must check that white noise has the remaining properties of a measure, and we refer the reader to the proof of Proposition 5.1 in \cite{KMini}.
\end{proof}

\begin{remark}
Heuristically, one thinks of white noise as random noise that is ``independent" at every point in time and space. One can then interpret $\dot{W}(A)$ heuristically as being the net contribution of the noise in $A$. With this heuristic interpretation, it is at least intuitively reasonable that white noise has the properties of a measure. The fact that the noise is independent at every point in time and space is in accordance with the second property in Proposition \ref{whitenoiseprop}.
\end{remark}

{\bf Stochastic integration against white noise.}  We will first define integration of simple functions against white noise, and then proceed to the most general case by an approximation argument. For this purpose, we introduce the following nomenclature (see Sec.~5 of \cite{KMini}):
\begin{itemize}
\item  For any $A \in \mathcal{B}(\mathbb{R}^{n})$ and $t \in \mathbb{R}^+$, we use $W_{t}(A)$ to denote
$
W_{t}(A) = \dot{W}([0, t] \times A), 
$
for $A \in \mathcal{B}(\mathbb{R}^{n})$,
so that $[0, t] \times A \in \mathcal{B}(\mathbb{R}^{+} \times \mathbb{R}^{n})$. 
\item For $t > 0$, we consider the {\emph{filtration}} $\mathcal{F}_{t}$ associated to white noise to be the $\sigma$-algebra generated by the collection of random variables 
$
\{W_{s}(A) : s \in [0, t], A \in \mathcal{B}(\mathbb{R}^{n})\}.
$ 
\item We use $\mathcal{S}$ to denote the space of simple functions, which are functions of the form
\begin{equation}\label{simpledef}
f(t, x, \omega) = \sum_{i = 1}^{n} X_{i}(\omega) 1_{(a_{i}, b_{i}]}(t) 1_{A_{i}}(x),
\end{equation}
where $X_{i}$ is a bounded, $\mathcal{F}_{a_{i}}$-measurable random variable with $0 \le a_{i} < b_{i}$, and $A_{i} \in \mathcal{B}(\mathbb{R}^{n})$ is bounded. 

\end{itemize}

\begin{definition}
Let $f \in \mathcal{S}$ be a simple function. 
We define 
\begin{align}\label{simpleint}
\int_{0}^t \int_{\mathbb{R}^{n}} f(s,x,\omega) W(dx,ds) & = \int_{0}^t \int_{\mathbb{R}^{n}} \sum_{i = 1}^{n} X_{i}(\omega) 1_{(a_{i}, b_{i}]}(s) 1_{A_{i}}(x) W(dx, ds) 
\nonumber \\
&:=  \sum_{i = 1}^{n} X_{i}(\omega) [W_{t \wedge b_{i}}(A_{i}) - W_{t \wedge a_{i}}(A_{i})],
\end{align}
where the ``wedge'' notation corresponds to $\alpha \wedge \beta = \min\{\alpha,\beta\}$.
\end{definition}
It is easy to check that the definition of the integral in \eqref{simpleint} is independent of the representation of the simple function as \eqref{simpledef}. 

We have the following crucial isometry property for the stochastic integral of simple functions against spacetime white noise. This is an extension of the Itô isometry to the stochastic integral against spacetime white noise. 

\begin{proposition}\label{isometry}
For $f \in \mathcal{S}$,
\begin{equation}\label{Ito}
\mathbb{E}\left[\left(\int_{0}^{\infty} \int_{\mathbb{R}^{n}} f(t, x, \omega) W(dx, dt)\right)^{2}\right] = \mathbb{E}\left(\int_{0}^{\infty} \int_{\mathbb{R}^{n}} |f(t, x, \omega)|^{2} dx dt\right).
\end{equation}
\end{proposition}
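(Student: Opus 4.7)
The strategy is the standard one for Itô-type isometries: reduce $f$ to a canonical form with disjoint supports, then expand the square and exploit the mean-zero, independent-increment structure of white noise collected in Proposition \ref{whitenoiseprop}. First, I would rewrite $f$ as $\sum_k Y_k \mathbf{1}_{(a_k,b_k]}(t)\mathbf{1}_{B_k}(x)$ so that the rectangles $R_k := (a_k,b_k]\times B_k$ are pairwise disjoint and, in addition, any two distinct rectangles either have disjoint time intervals or share a common time interval $(a,b]$ with disjoint spatial sets. This canonical form is produced by first refining the finitely many time endpoints $\{a_i,b_i\}$ appearing in the original representation \eqref{simpledef} into a single partition $t_0<t_1<\cdots<t_L$, and then, on each time slot $(t_{l-1},t_l]$, refining the spatial sets $\{A_i\}$ into disjoint Borel pieces. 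Each new coefficient $Y_k$ is a finite sum of those $X_i$'s whose original rectangle contains $R_k$, and therefore remains bounded and $\mathcal{F}_{a_k}$-measurable, since $a_i\le a_k$ whenever $X_i$ contributes. A direct check from \eqref{simpleint} shows that the integral is unchanged under this rewriting.

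With $f$ in canonical form, $\int_0^\infty\int_{\mathbb{R}^n} f\,W(dx,dt) = \sum_k Y_k\dot{W}(R_k)$, so squaring and taking expectation yields
\[
\mathbb{E}\Bigl[\bigl(\textstyle\sum_k Y_k\dot{W}(R_k)\bigr)^2\Bigr] = \sum_k \mathbb{E}\bigl[Y_k^2\dot{W}(R_k)^2\bigr] + 2\sum_{k<\ell}\mathbb{E}\bigl[Y_k Y_\ell\dot{W}(R_k)\dot{W}(R_\ell)\bigr],
\]
while the right-hand side of \eqref{Ito} reduces, by disjointness of the $R_k$, to $\sum_k \mathbb{E}[Y_k^2]\,\lambda(R_k)$. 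For the diagonal contributions, $\dot{W}(R_k)=W_{b_k}(B_k)-W_{a_k}(B_k)$ is a mean-zero Gaussian of variance $\lambda(R_k)$ by the covariance formula \eqref{cov}, and it is independent of $\mathcal{F}_{a_k}$ by the definition of the filtration; since $Y_k$ is bounded and $\mathcal{F}_{a_k}$-measurable, factoring produces $\mathbb{E}[Y_k^2\dot{W}(R_k)^2] = \mathbb{E}[Y_k^2]\lambda(R_k)$, which is precisely the $k$-th term on the right-hand side.

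The cross terms are the only place that needs genuine care. For $k\ne\ell$ with $a_k\le a_\ell$, the canonical form forces one of two alternatives. If the time intervals are disjoint, then $b_k\le a_\ell$, so $Y_k Y_\ell\dot{W}(R_k)$ is $\mathcal{F}_{a_\ell}$-measurable while $\dot{W}(R_\ell)$ is independent of $\mathcal{F}_{a_\ell}$ and mean zero, and conditioning on $\mathcal{F}_{a_\ell}$ kills the expectation. If instead the time intervals coincide as $(a,b]$, then $B_k\cap B_\ell = \emptyset$ by construction, so $\dot{W}(R_k)$ and $\dot{W}(R_\ell)$ are independent mean-zero Gaussians by the disjointness property of Proposition \ref{whitenoiseprop}, both independent of $\mathcal{F}_a$; since $Y_k Y_\ell$ is $\mathcal{F}_a$-measurable, the expectation factors into a product containing $\mathbb{E}[\dot{W}(R_k)]=0$. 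Summing then recovers \eqref{Ito}. The only genuine obstacle is the bookkeeping of the refinement step — in particular, verifying that the refined coefficients inherit the correct $\mathcal{F}_{a_k}$-measurability so that the conditioning arguments go through — after which the identity is just the covariance formula \eqref{cov} together with the independence properties of white noise.
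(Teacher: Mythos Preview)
Your proof is correct and follows essentially the same approach as the paper: reduce to a representation with disjoint space-time rectangles, handle the diagonal terms via the $\mathcal{F}_{a_k}$-measurability of the coefficient together with the variance formula \eqref{cov}, and kill the cross terms using the independence property in Proposition~\ref{whitenoiseprop}. The paper carries out the single-rectangle case in detail and then merely asserts that the general case follows by choosing a disjoint representation and invoking independence; you have filled in precisely the bookkeeping (the common time-partition refinement, the inherited $\mathcal{F}_{a_k}$-measurability of the refined coefficients, and the two-case analysis for the off-diagonal terms) that the paper leaves to the reader.
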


\begin{proof}
In the case where $f(t, x, \omega)$ is a simple function of the form
\begin{equation*}
f(t, x, \omega) = X(\omega)1_{(a, b]}(t)1_{A}(x),
\end{equation*}
one easily checks that
\begin{multline*}
\mathbb{E}\left[\left(\int_{0}^{\infty} \int_{\mathbb{R}^{n}} f(t, x, \omega) W(dx, dt)\right)^{2}\right] = \mathbb{E}\left(X^{2}(\omega) \left[W_{b}(A) - W_{a}(A)\right]^{2}\right) \\
= \mathbb{E}\left(\mathbb{E}\left[X^{2}(\omega) \left(W_{b}(A) - W_{a}(A)\right)^{2} | \mathcal{F}_{a}\right]\right) = \mathbb{E} \left(X^{2}(\omega) \mathbb{E}\left[\left(W_{b}(A) - W_{a}(A)\right)^{2} | \mathcal{F}_{a}\right]\right),
\end{multline*}
where we used the fact that $X \in \mathcal{F}_{a}$ to take it out of the conditional expectation. Using the third property in Proposition \ref{whitenoiseprop},
\begin{equation*}
\mathbb{E}\left[\left(\int_{0}^{\infty} \int_{\mathbb{R}^{n}} f(t, x, \omega) W(dx, dt)\right)^{2}\right] = \mathbb{E}\left(X^{2}(\omega) \mathbb{E}\left[\dot{W}^{2}((a, b] \times A) | \mathcal{F}_{a} \right]\right).
\end{equation*}
%If $0 \le t \le a$, this is zero. If $t > a$, t
%
Using the second property in Proposition \ref{whitenoiseprop} we deduce that this is equal to 
\begin{equation*}
= \mathbb{E}\left[ X^{2}(\omega)\right] \mathbb{E}\left[\dot{W}^{2}((a, b] \times A))\right] = \lambda(A) (b - a) \mathbb{E}\left[X^{2}(\omega)\right] = \mathbb{E}\left(\int_{0}^{\infty} \int_{\mathbb{R}^{n}} |f(t, x, \omega)|^{2} dx dt\right).
\end{equation*}
%Since the rightmost side is equal to $0$ if $0 \le t \le a$, we have the desired result. 

We note that \eqref{Ito} holds for general $f \in \mathcal{S}$, by choosing a representation \eqref{simpledef} of an arbitrary simple function where the sets {{$(a_{i} \times b_{i}] \times A_{i}$}} are disjoint, and then using the independence property in the second property listed in Proposition \ref{whitenoiseprop}. 
\end{proof}

Next, we want to extend the definition of the stochastic integral to more general integrands. 
For this purpose we recall the following definitions.
\begin{definition}
Let $f(t, x, \omega)$ be a real valued function $f: \mathbb{R}^{+} \times \mathbb{R}^{n} \times \Omega \to \mathbb{R}$.
\begin{enumerate}
\item We say that $f(t, x, \omega)$ is {\emph{adapted to the filtration}} $\{\mathcal{F}_{t}\}_{t \ge 0}$ if the map $\omega \to f(t, x, \omega)$ is $\mathcal{F}_{t}$ measurable for each $x \in \mathbb{R}^{n}$ and $t \ge 0$. 
\item We say that $f(t, x, \omega)$ is {\emph{jointly measurable}} if it is measurable as a function in time, space, and the probability space, $f: \mathbb{R}^{+} \times \mathbb{R}^{n} \times \Omega \to \mathbb{R}$.
\end{enumerate}
\end{definition}

To define the stochastic integral, we must identify the class of admissible integrands, which will be called predictable processes  \cite{DF}.
To do that, we denote by  $\mathcal{H}$ the set of all jointly measurable $f(t, x, \omega)$ such that 
\begin{equation*}
\mathbb{E}\left(\int_{0}^{\infty} \int_{\mathbb{R}^{n}} f(t, x, \omega)^{2} dx dt\right) < \infty.
\end{equation*}
Note that $\mathcal{S} \subset \mathcal{H}$. 

\begin{definition}
Define $\mathcal{P}_{W}$ to be the the closure of $\mathcal{S} \subset \mathcal{H}$ under the norm 
\begin{equation}\label{predictablenorm}
||f||_{\mathcal{P}_{W}}^{2} := \mathbb{E}\left(\int_{0}^{\infty} \int_{\mathbb{R}^{n}} f(t, x, \omega)^{2} dx dt\right) < \infty.
\end{equation}
The elements of $\mathcal{P}_{W}$ are called {\emph{predictable processes}}.
\end{definition}

Finally, we define the stochastic integral for predicable processes, namely 
\begin{equation}\label{generalstochastic}
\int_{0}^{\infty} \int_{\mathbb{R}^{n}} f(t, x, \omega) W(dx, dt), \qquad \text{ for } f \in \mathcal{P}_W,
\end{equation}
 by utilizing a density argument that uses the Itô isometry. In particular, we use the fact that functions $\mathcal{S}$ are dense in $\mathcal{P}_{W}$ (see Proposition 2.3 in \cite{Walsh}). Hence, given $f \in \mathcal{P}_W$, there is a sequence $f_{k} \in \mathcal{S}$ such that $f_{k} \to f$ in $\mathcal{P}_{W}$, as $k \to \infty$. 
 Using the isometry relation in Proposition \ref{isometry}, one can show that the sequence
\begin{equation}\label{int_seq}
\left\{ \int_{0}^{\infty} \int_{\mathbb{R}^{n}} f_{k}(t, x, \omega) W(dx, dt)\right\}_{k = 1}^\infty
\end{equation}
is a Cauchy sequence in $L^{2}(\Omega)$.
\begin{definition}
The random variable obtained in the limit of integrals \eqref{int_seq} is the {\emph{stochastic integral}} \eqref{generalstochastic}.
\end{definition}

We can also define the integral on bounded time intervals, by noting that
\begin{equation*}
\int_{0}^{T} \int_{\mathbb{R}^{n}} f(t, x, \omega) W(dx, dt) = \int_{0}^{\infty} \int_{\mathbb{R}^{n}} 1_{(0, T]}(t) f(t, x, \omega) W(dx, dt).
\end{equation*}

Since the definition of the admissible integrands $\mathcal{P}_{W}$ is abstract, we list a set of criteria that will help us determine whether a given integrand is in $\mathcal{P}_{W}$ or not. Hence, we use the following proposition, which follows directly
from Proposition 2 in \cite{DF}.

{{
\begin{proposition}\label{PW}
Let $\{u(t, x)\}_{t \in [0, T], x \in \mathbb{R}^{n}}$ be a stochastic process adapted to the filtration $\{\mathcal{F}_{t}\}_{t \ge 0}$ such that the following conditions hold.
\begin{enumerate}
\item Joint measurability: $(t, x, \omega) \to u(t, x, \omega)$ is $\mathcal{B}([0, T] \times \mathbb{R}^{n}) \times \mathcal{F}_{T}$ measurable.
\item Finite second moments: $\mathbb{E}\left(|u(t, x)|^{2}\right) < \infty$ for all $t \in [0, T]$, $x \in \mathbb{R}^{n}$.
\item Continuity in $L^{2}(\Omega)$: The process $u$ considered as a map $(t, x) \in [0, T] \times \mathbb{R}^{n} \to L^{2}(\Omega)$ is continuous in $L^{2}(\Omega)$.
\item Square integrability:
$
\displaystyle{\mathbb{E}\left(\int_{0}^{T} \int_{\mathbb{R}^{n}} |u(s, y)|^{2} dy ds\right) < \infty.}
$
\end{enumerate}
Then, the stochastic integral
\begin{equation*}
\int_{0}^{t} \int_{\mathbb{R}^{n}} u(s, y) W(ds, dy)
\end{equation*}
is defined for all $t \in [0, T]$. 
\end{proposition}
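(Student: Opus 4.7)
The plan is to show that any process $u$ satisfying conditions (1)-(4) lies in the space $\mathcal{P}_W$, so that the stochastic integral is then defined by the density/isometry construction already established in the preliminaries. Concretely, I will build a sequence $\{f_k\}\subset\mathcal{S}$ of simple functions with $\|f_k - u\|_{\mathcal{P}_W} \to 0$; this is done in three successive approximations: spatial truncation, time discretization, and spatial discretization.

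First, I would extend $u$ by zero outside $[0,T]$ in time. For a spatial cutoff, set $u^R(t,x,\omega) = u(t,x,\omega)\mathbf{1}_{|x|\le R}$. By the square integrability hypothesis (4) and dominated convergence, $\|u^R - u\|_{\mathcal{P}_W}\to 0$ as $R\to\infty$, so it suffices to approximate each bounded-support $u^R$. Next, for each $n\in\mathbb{N}$, partition $[0,T]$ into intervals $(t_{i-1},t_i]$ of length $T/n$ with $t_i = iT/n$, and define the time-step approximation
\begin{equation*}
u_n(t,x,\omega) = \sum_{i=1}^{n} u(t_{i-1},x,\omega)\,\mathbf{1}_{(t_{i-1},t_i]}(t)\,\mathbf{1}_{|x|\le R}.
\end{equation*}
This is adapted since, by hypothesis (1), $u(t_{i-1},x,\cdot)$ is $\mathcal{F}_{t_{i-1}}$-measurable. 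By the $L^2(\Omega)$-continuity assumption (3) together with the finite second moments (2), we have $u_n(t,x)\to u^R(t,x)$ in $L^2(\Omega)$ for each fixed $(t,x)\in [0,T]\times\mathbb{R}^n$, and a dominated convergence argument using (4) promotes this pointwise convergence to convergence in the $\mathcal{P}_W$ norm.

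Finally, I would reduce $u_n$ to a genuinely simple function. For each $i$, the $\mathcal{F}_{t_{i-1}}$-measurable random field $(x,\omega)\mapsto u(t_{i-1},x,\omega)\mathbf{1}_{|x|\le R}$ lies in $L^2(\mathbb{R}^n\times\Omega,\mathcal{B}(\mathbb{R}^n)\otimes\mathcal{F}_{t_{i-1}})$, which by standard measure theory is the closure of linear combinations of products $Y(\omega)\mathbf{1}_A(x)$ with $Y$ bounded and $\mathcal{F}_{t_{i-1}}$-measurable and $A\in\mathcal{B}(\mathbb{R}^n)$ bounded. Truncating the random coefficients and taking a diagonal sequence in the time discretization and the spatial simple-function expansion, I obtain a sequence $f_k\in\mathcal{S}$ with $\|f_k-u\|_{\mathcal{P}_W}\to 0$, which places $u\in\mathcal{P}_W$ and gives the stochastic integral by the definition in the preliminaries. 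The main obstacle is the dominated convergence step in the time discretization: one must produce an integrable-in-$(t,x)$ majorant for $\mathbb{E}[|u_n(t,x)|^2]$ uniformly in $n$; this is handled by combining the $L^2(\Omega)$-continuity on the compact set $[0,T]\times\overline{B_R}$ (which yields uniform boundedness of second moments) with the square integrability bound (4), after which the remainder of the argument is routine.
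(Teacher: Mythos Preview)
Your approach is correct in outline but differs substantially from the paper's. The paper does not build simple-function approximations from scratch; instead it invokes Proposition~2 of Dalang--Frangos, which gives $1_{[0,t_0]\times K}\,u\in\mathcal{P}_W$ for any compact $K\subset\mathbb{R}^n$, and then takes an exhaustion $K_i\uparrow\mathbb{R}^n$, using hypothesis~(4) to show $1_{[0,T]\times K_i}\,u \to 1_{[0,T]\times\mathbb{R}^n}\,u$ in the $\mathcal{P}_W$-norm. Your argument is more self-contained: you redo the approximation by simple functions directly via spatial truncation, time discretization, and spatial discretization. This buys independence from the external reference at the cost of reproducing machinery that \cite{DF} already packages.

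One point deserves more care. In the spatial-discretization step you assert that $(x,\omega)\mapsto u(t_{i-1},x,\omega)1_{|x|\le R}$ lies in $L^2(\mathbb{R}^n\times\Omega,\mathcal{B}(\mathbb{R}^n)\otimes\mathcal{F}_{t_{i-1}})$. Joint measurability (hypothesis~1) is only with respect to $\mathcal{F}_T$, and adaptedness is stated pointwise in $(t,x)$; neither immediately yields $\mathcal{B}(\mathbb{R}^n)\otimes\mathcal{F}_{t_{i-1}}$-measurability of the slice. The clean fix is to bypass the abstract density claim and argue constructively: partition $\overline{B_R}$ into small cubes $Q_j$ with centers $x_j$ and set $v=\sum_j u(t_{i-1},x_j)\,1_{Q_j}$. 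Each coefficient $u(t_{i-1},x_j)$ is $\mathcal{F}_{t_{i-1}}$-measurable by adaptedness, and uniform $L^2(\Omega)$-continuity on the compact set $[0,T]\times\overline{B_R}$ makes $\|v-u(t_{i-1},\cdot)1_{\overline{B_R}}\|_{L^2(\mathbb{R}^n\times\Omega)}$ small. Truncating the coefficients then lands you in $\mathcal{S}$. With this adjustment the argument goes through.
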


\begin{proof}
This proposition follows from Proposition 2 of \cite{DF}, and is Proposition 2 of \cite{DF} adapted to the current context. Though Proposition 2 of \cite{DF} is stated for the more general case of spatially homogeneous Gaussian noise, the statement of Proposition 2 of \cite{DF} specialized to the case of white noise reads as follows:

Let $\{u(t, x)\}_{t \in \mathbb{R}^{+}, x \in \mathbb{R}^{n}}$ be a stochastic process adapted to the filtration $\{\mathcal{F}_{t}\}_{t \ge 0}$ and define $\mathcal{F} = \bigcup_{t \ge 0} \mathcal{F}_{t}$. Suppose the following conditions hold:
\begin{enumerate}
\item Joint measurability: $(t, x, \omega) \to u(t, x, \omega)$ is $\mathcal{B}(\mathbb{R}_{+} \times \mathbb{R}^{n}) \times \mathcal{F}$ measurable.
\item Finite second moments: $\mathbb{E}\left(|u(t, x)|^{2}\right) < \infty$ for all $t \in \mathbb{R}^{+}$, $x \in \mathbb{R}^{n}$.
\item Continuity in $L^{2}(\Omega)$: The process $u$ considered as a map $(t, x) \in \mathbb{R}^{+} \times \mathbb{R}^{n} \to L^{2}(\Omega)$ is continuous in $L^{2}(\Omega)$.
\item Square integrability on a compact set and finite time: There exists a compact set $K \subset \mathbb{R}^{n}$ and $t_{0} > 0$ such that 
\begin{equation*}
\mathbb{E}\left(\int_{0}^{t_{0}} \int_{K} |u(s, y)|^{2} dy ds\right) < \infty.
\end{equation*}
\end{enumerate}
Then, $1_{[0, t_{0}] \times K}(t, x) u(t, x) \in \mathcal{P}_{W}$.

\vspace{0.1in}

While the result in Proposition 2 of \cite{DF} is stated specifically for spatial dimension two, one can verify that it holds for arbitrary dimension.

To see that the statement of Proposition 2 of \cite{DF} implies the result in Proposition \ref{PW}, let $K_{i}$ be a sequence of compact sets that increase to $\mathbb{R}^{n}$, and consider $\{u(t, x)\}_{t \in [0, T], x \in \mathbb{R}^{n}}$ satisfying the four conditions in Proposition \ref{PW}. We extend $\{u(x, t)\}_{t \in [0, T], x \in \mathbb{R}^{n}}$ to be defined on all of time $t \ge 0$ by defining
\begin{equation*}
\tilde{u}(x, t) = u(x, t) \quad \text{ if } t \in [0, T], \qquad \tilde{u}(x, t) = u(x, T) \quad \text{ if } t \ge T.
\end{equation*}
Then, $\{\tilde{u}(t, x)\}_{t \in \mathbb{R}^{+}, x \in \mathbb{R}^{n}}$ along with $t_{0} = T$ and each $K_{i}$ satisfies the conditions in Proposition 2 in \cite{DF}. Therefore, $1_{[0, T] \times K_{i}}(t, x) u(t, x) = 1_{[0, T] \times K_{i}}(t, x) \tilde{u}(t, x) \in \mathcal{P}_{W}$. 

Since the fourth condition of Proposition \ref{PW} states that
\begin{equation*}
\mathbb{E}\left(\int_{0}^{T} \int_{\mathbb{R}^{n}} |u(s, y)|^{2} dy ds\right) < \infty,
\end{equation*}
we have that $1_{[0, T] \times K_{i}}(t, x) u(t, x) \to 1_{[0, T] \times \mathbb{R}^{n}}(t, x) u(t, x)$ in the norm of $\mathcal{P}_{W}$, since $K_{i}$ is a sequence of compact sets in $\mathbb{R}^{n}$ increasing to all of $\mathbb{R}^{n}$. Hence, $1_{[0, T] \times \mathbb{R}^{n}}(t, x) u(t, x) \in \mathcal{P}_{W}$ since $\mathcal{P}_{W}$ is complete with respect to its norm.
\end{proof}
}}

A couple of remarks are in order. The first one uses the concept of {\emph{modification}}, which we now recall.
\begin{definition}\label{def:modification}
Let $\{u(t, x)\}_{t \in [0, T], x \in \mathbb{R}^{n}}$ be a stochastic process. We say that
$\{\tilde{u}(t, x)\}_{t \in [0, T], x \in \mathbb{R}^{n}}$ is a \textit{modification} of $\{u(t, x)\}_{t \in [0, T], x \in \mathbb{R}^{n}}$ if 
\begin{equation*}
\mathbb{P}(u(t, x) = \tilde{u}(t, x)) = 1, \qquad \text{ for all } t \in [0, T], x \in \mathbb{R}^{n}.
\end{equation*}
\end{definition} 

\begin{remark}\label{checkless}
The third condition in Proposition~\ref{PW}  implies that there is a jointly measurable modification ({{see the discussion
on pg. 201 of \cite{DF}}}, and the proof of Theorem 13 in \cite{Dalang}). Thus, in practice, one does not need to check the first condition, as by taking a modification, the third condition implies the first. 
\end{remark}

Finally, we recall the following useful inequality, {{which is a direct consequence of a classical result}} known as the BDG (Burkholder-Davis-Gundy) inequality, which will be used frequently  \cite{KMini}.

\begin{theorem}\label{BDG}
For each $p \ge 2$, there exists a positive constant $c_{p}$ depending only on $p$ (and not on $T$) such that
\begin{equation*}
\mathbb{E}\left(\left|\int_{0}^{T} \int_{\mathbb{R}^{n}} f(t, x, \omega) W(dx, dt)\right|^{p}\right) \le c_{p} \mathbb{E}\left(\left(\int_{0}^{T} \int_{\mathbb{R}^{n}} |f(t, x, \omega)|^{2} dx dt\right)^{p/2}\right),
\end{equation*}
for all $f \in \mathcal{P}_{W}$. 
\end{theorem}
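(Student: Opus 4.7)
The plan is to reduce the statement to the classical BDG inequality for continuous real-valued martingales. First, I would establish the estimate for simple integrands $f \in \mathcal{S}$, for which the stochastic integral is defined explicitly by \eqref{simpleint}. Fix such an $f$ and define
\begin{equation*}
M_t := \int_0^t \int_{\mathbb{R}^n} f(s,x,\omega)\, W(dx,ds), \qquad t \in [0,T].
\end{equation*}
Using the finite-additivity and independence properties of white noise collected in Proposition \ref{whitenoiseprop}, together with the measurability of each $X_i$ with respect to $\mathcal{F}_{a_i}$, I would verify directly from \eqref{simpleint} that $\{M_t\}_{t \in [0,T]}$ is a continuous, square-integrable $(\mathcal{F}_t)$-martingale. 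The Itô isometry established in Proposition \ref{isometry}, applied to indicator functions $\mathbf{1}_{(0,t]}(s) f(s,x,\omega)$, then identifies the quadratic variation as
\begin{equation*}
\langle M \rangle_t = \int_0^t \int_{\mathbb{R}^n} |f(s,x,\omega)|^2\, dx\, ds.
\end{equation*}

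At this point, the classical BDG inequality for continuous martingales (in the standard form $\mathbb{E}[\sup_{t \le T} |M_t|^p] \le c_p\, \mathbb{E}[\langle M\rangle_T^{p/2}]$, valid for $p \ge 2$) applies verbatim to $M$ and immediately yields the desired inequality for every simple $f \in \mathcal{S}$, with a constant $c_p$ independent of $T$ and of $f$.

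The remaining step is a density argument to extend the estimate from $\mathcal{S}$ to $\mathcal{P}_W$. If the right-hand side is infinite, there is nothing to prove, so I would assume that $\mathbb{E}\bigl[\bigl(\int_0^T \int_{\mathbb{R}^n} |f|^2\, dx\, ds\bigr)^{p/2}\bigr] < \infty$. By definition of $\mathcal{P}_W$ there exists a sequence $f_k \in \mathcal{S}$ with $\|f_k - f\|_{\mathcal{P}_W} \to 0$; after a truncation (e.g., replacing $f_k$ by its pointwise truncation at level $k$ and re-approximating by simple functions) one may additionally arrange that the scalar quantities $\int_0^T\!\int_{\mathbb{R}^n} |f_k|^2\, dx\, ds$ are dominated, up to a factor, by $\int_0^T\!\int_{\mathbb{R}^n} |f|^2\, dx\, ds$ and converge to it almost surely. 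The Itô isometry from Proposition \ref{isometry} then gives $L^2(\Omega)$-convergence of the stochastic integrals, so along a subsequence the integrals converge almost surely to $\int_0^T \!\int f\, dW$.

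Finally, I would pass to the limit: Fatou's lemma on the left-hand side and dominated convergence on the right-hand side (using the scalar domination constructed above) upgrade the inequality from the simple functions $f_k$ to the general integrand $f$, with the same constant $c_p$. The main obstacle I anticipate is precisely the last approximation step, since $\mathcal{S}$ is dense in $\mathcal{P}_W$ only under the $L^2(\Omega; L^2([0,T]\times \mathbb{R}^n))$ norm, whereas the BDG estimate is formulated in $L^p$; the truncation-and-domination device above is what bridges this gap.
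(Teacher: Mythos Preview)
The paper does not prove this theorem; it is stated as a direct consequence of the classical Burkholder--Davis--Gundy inequality and cited to \cite{KMini} without further argument. Your proposal supplies the standard derivation and is correct in outline.

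One simplification worth noting: rather than carrying out the $L^{p}$ density step via the truncation-and-domination device you describe at the end, you can observe that once the stochastic integral has been constructed for a general $f \in \mathcal{P}_{W}$, the process $M_{t} = \int_{0}^{t}\!\int_{\mathbb{R}^{n}} f\, W(dx,ds)$ is itself a continuous square-integrable $(\mathcal{F}_{t})$-martingale whose quadratic variation is $\langle M\rangle_{T} = \int_{0}^{T}\!\int_{\mathbb{R}^{n}} |f|^{2}\,dx\,ds$; both facts are inherited from the $L^{2}(\Omega)$ limit of simple integrands. The classical BDG inequality for continuous martingales then applies to $M$ directly, which bypasses entirely the approximation obstacle you flag.
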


\section{The stochastic viscous wave equation in dimensions $n = 1, 2$}\label{whitenoise}

We are now in the position to study the stochastic viscous wave equation:
\begin{equation}\label{sec3eqn}
u_{tt} + \sqrt{-\Delta}u_{t} - \Delta u = f(u) W(dx, dt), \qquad \text{ in } \mathbb{R}^{n},
\end{equation}
with initial data:
\begin{equation}\label{initial}
u(0, x) = g(x), \qquad \partial_{t}u(0, x) = h(x).
\end{equation}
For simplicity, we assume that $g$ and $h$ are continuous functions in $H^{2}(\mathbb{R}^{n})$, $f: \mathbb{R} \to \mathbb{R}$ is Lipschitz continuous, and $W(dx, dt)$ is spacetime white noise. In particular, since $f$ is Lipschitz continuous, there exists a constant $L > 0$ such that
\begin{equation*}
|f(x) - f(y)| \le L|x - y|, \qquad \text{ for all } x, y \in \mathbb{R},
\end{equation*}
\begin{equation}\label{growth}
|f(x)| \le L(1 + |x|), \qquad \text{ for all } x \in \mathbb{R}.
\end{equation}
We will show that the Cauchy problem \eqref{sec3eqn}, \eqref{initial} has a \textit{mild solution} in the sense of a stochastic process satisfying a stochastic {{integral equation}} (see Definition~\ref{milddef} below), which is function-valued in dimensions $n = 1, 2$. This is in contrast to the corresponding stochastic heat and wave equations,
\begin{equation}\label{stheat}
u_{t} - \Delta u = f(u) W(dx, dt), \qquad \text{ in } \mathbb{R}^{n},
\end{equation}
\begin{equation}\label{stwave}
u_{tt} - \Delta u = f(u) W(dx, dt), \qquad \text{ in } \mathbb{R}^{n},
\end{equation}
which have function-valued mild solutions only in dimension $n = 1$. 
%%%%%%%%%%%%
\if 1 = 0
Therefore, since the viscous wave equation has behavior that is intuitively ``between" the heat and wave equations, it is surprising that the stochastic equation \eqref{sec3eqn} allows function-valued mild solutions in dimension two. {{TODO DISCUSS OTHER POSSIBILITIES FOR THE VISCOUS OPERATOR}}. This is of particular interest in the context of fluid-structure interaction, because dimension two is the physical dimension. 
\fi
%%%%%%%%%%%%%

In the next section we review the concept of mild solution for the stochastic heat and wave equations, and demonstrate the well-known fact that there are function-valued mild solutions only in dimension one. We then consider the concept of mild solutions for the stochastic viscous wave equation, showing heuristically why we will be able to consider such solutions in dimension two. In Section \ref{WellPosed} we rigorously prove existence and uniqueness of a mild solution to \eqref{sec3eqn}, \eqref{initial} using a Picard iteration argument to deal with the nonlinearity $f(u)$.

\subsection{The concept of mild solution}

To define the concept of mild solution for the {\emph{stochastic}} viscous wave equation \eqref{sec3eqn}, we first recall the solution 
for the {\emph{deterministic}} inhomogeneous problem \eqref{inhomlin}.
Namely, as shown earlier, the solution to the {\emph{deterministic}} inhomogeneous problem \eqref{inhomlin}
%\begin{equation*}
%u_{tt} + \sqrt{-\Delta} u_{t} - \Delta u = F,
%\end{equation*}
with initial data $u(0, x) = g(x)$ and $\partial_{t}u(0, x) = h(x)$, is given by the formula
\begin{equation}\label{detviscoussol}
u(t, x) = \int_{\mathbb{R}^{n}} J_{t}(x - y) g(y) dy + \int_{\mathbb{R}^{n}} K_{t}(x - y) h(y) dy + \int_{0}^{t} \int_{\mathbb{R}^{n}} K_{t - s}(x - y) F(s, y) dy ds,
\end{equation}
where $J_{t}(x)$ is defined by \eqref{Jkernel}, and $K_{t}(x)$  by \eqref{kernel}. 
For the general {\emph{stochastic}} case \eqref{sec3eqn} with initial data \eqref{initial}, 
we can formally regard the stochastic forcing $f(u) W(dx, dt)$ as the forcing term $F$ in the deterministic equation, and
formally require that the solution $u$ to the stochastic viscous wave equation satisfy the  {\emph{stochastic integral equation}} alla \eqref{detviscoussol}:
\begin{equation*}
u(t, x, \omega) = \int_{\mathbb{R}^{n}} J_{t}(x - y) g(y) dy + \int_{\mathbb{R}^{n}} K_{t}(x - y) h(y) dy + \int_{0}^{t} \int_{\mathbb{R}^{n}} K_{t - s}(x - y) f(u(s, y, \omega)) W(dy, ds).
\end{equation*}
The result of this formal argument gives rise to the concept of a mild solution.

\begin{definition}\label{milddef}
A stochastic process $u(t, x)$ is a \textit{mild solution} to the stochastic viscous wave equation \eqref{sec3eqn} with initial data \eqref{initial} if $u(t, x)$ is  jointly measurable and adapted to the filtration ${\cal{F}}_t$ with
\begin{equation}\label{mildsoln}
u(t, x, \omega) = \int_{\mathbb{R}^{n}} J_{t}(x - y) g(y) dy + \int_{\mathbb{R}^{n}} K_{t}(x - y) h(y) dy + \int_{0}^{t} \int_{\mathbb{R}^{n}} K_{t - s}(x - y) f(u(s, y, \omega)) W(dy, ds),
\end{equation}
and the stochastic integral on the right hand side of \eqref{mildsoln} is defined. 
\end{definition}

\begin{remark}[Probabilistic notation]
{{In the remainder of this manuscript, we will generally follow the probabilistic convention of not writing the explicit $\omega$ dependence of random variables and stochastic processes. In particular, while we wrote out the explicit $\omega$ dependence in the stochastic process $u(t, x, \omega)$ in \eqref{mildsoln}, we will henceforth omit the explicit $\omega$ dependence when it is clear from context that the mathematical quantity involved is a random variable. For example, we would write 
\begin{equation*}
u(t, x) = \int_{\mathbb{R}^{n}} J_{t}(x - y) g(y) dy + \int_{\mathbb{R}^{n}} K_{t}(x - y) h(y) dy + \int_{0}^{t} \int_{\mathbb{R}^{n}} K_{t - s}(x - y) f(u(s, y)) W(dy, ds),
\end{equation*}
for the full expression in \eqref{mildsoln}.}}
\end{remark}

We can define the concept of a mild solution to the stochastic heat and the stochastic wave equations \eqref{stheat} and \eqref{stwave} in the same way using the deterministic heat and wave equation representation formulas for the solutions
of the corresponding inhomogeneous equation, 
given in \eqref{heatformula} and \eqref{waveformula}.

As mentioned earlier, 
the existence of a function-valued mild solution to the stochastic heat and wave equations \eqref{stheat} and \eqref{stwave},
defined this way, can be obtained only in dimension $n = 1$, as was discussed in \cite{DMini} and \cite{KMini}.
However, we will be able to prove the existence of a function-valued mild solution to the stochastic viscous wave equation \eqref{sec3eqn} in both dimensions $n = 1, 2$. To give an idea of why we might expect this to be true, we present the following heuristic argument.

{\bf{A heuristic argument for the existence of a mild solution to \eqref{sec3eqn}, \eqref{initial} in $n=2$.}}
For simplicity, let $f: \mathbb{R} \to \mathbb{R}$ on the right hand-side in \eqref{sec3eqn}, \eqref{stheat}, and \eqref{stwave} be identically equal to 1,
so that we can just consider the case of additive noise. Therefore, we consider the equations
\begin{equation}\label{easystheat}
u_{t} - \Delta u = W(dx, dt), \qquad \text{ on } \mathbb{R}^{n},
\end{equation}
\begin{equation}\label{easystwave}
u_{tt} - \Delta u = W(dx, dt), \qquad \text{ on } \mathbb{R}^{n},
\end{equation}
\begin{equation}\label{easystviscous}
u_{tt} + \sqrt{-\Delta} u_{t} - \Delta u = W(dx, dt), \qquad \text{ on } \mathbb{R}^{n}.
\end{equation}
Furthermore, for simplicity, we consider \textit{zero initial data} for the purposes of this heuristic argument. 

%We first consider the stochastic heat and wave equations \eqref{easystheat} and \eqref{easystwave}. 
For the stochastic heat equation with additive noise \eqref{easystheat}, we have an explicit formula for the solution as a stochastic integral,
\begin{equation*}
u(t, x) = \int_{0}^{t} \int_{\mathbb{R}^{n}} K^{H}_{t - s}(x - y) W(ds, dy),
\end{equation*}
where the kernel $K^{H}_{t}$ is defined by \eqref{kernelheat}. For this stochastic integral to make sense, we must have 
\begin{equation}\label{heatcondition}
\int_{0}^{t} \int_{\mathbb{R}^{n}} |K^{H}_{t - s}(x - y)|^{2} dy ds < \infty.
\end{equation}
Using the scaling relation \eqref{scalingheat}, we can rewrite condition \eqref{heatcondition} in terms of the unit kernel as
\begin{align}\label{heatcalc}
\int_{0}^{t} \int_{\mathbb{R}^{n}} & |K^{H}_{t - s}(x - y)|^{2} dy ds = \int_{0}^{t} \int_{\mathbb{R}^{n}} (t - s)^{-n} \left|K^{H}\left(\frac{x - y}{(t - s)^{1/2}}\right)\right|^{2} dy ds \nonumber \\
&= \int_{0}^{t} \int_{\mathbb{R}^{n}} (t - s)^{-n/2} |K^{H}(y)|^{2} dy ds = \left(\int_{0}^{t} (t - s)^{-n/2} ds\right) ||K^{H}||^{2}_{L^{2}(\mathbb{R}^{n})} < \infty.
\end{align}
Because $K^{H}$ is a Gaussian in all dimensions $n$, we have that $||K^{H}||^{2}_{L^{2}(\mathbb{R}^{n})} < \infty$ for all $n$. However, the time integral $\int_{0}^{t} (t - s)^{-n/2} ds$ only converges in dimension $n = 1$. This is the  reason why a function-valued mild solution to the stochastic heat equation with additive noise \eqref{easystheat} exists only in dimension $1$.  

Let us carry out a similar analysis for the stochastic wave equation with additive noise \eqref{easystwave}. A mild solution, if it exists, must be given by the stochastic integral
\begin{equation*}
u(t, x) = \int_{0}^{t} \int_{\mathbb{R}^{n}} K^{W}_{t - s}(x - y) W(ds, dy),
\end{equation*}
where $K^{W}_{t}$ is defined by \eqref{kernelwave}. This stochastic integral exists only if the following integrability condition is satisfied:
\begin{equation}\label{wavecondition}
\int_{0}^{t} \int_{\mathbb{R}^{n}} |K^{W}_{t - s}(x - y)|^{2} dy ds < \infty.
\end{equation}
By using the scaling relation \eqref{scalingwave}, this condition can be rewritten as
\begin{align}\label{wavecalc}
\int_{0}^{t} \int_{\mathbb{R}^{n}} & |K^{W}_{t - s}(x - y)|^{2} dy ds = \int_{0}^{t} \int_{\mathbb{R}^{n}} (t - s)^{2 - 2n} \left|K^{W}\left(\frac{x - y}{t - s}\right)\right|^{2} dy ds \nonumber \\
&= \int_{0}^{t} \int_{\mathbb{R}^{n}} (t - s)^{2 - n} |K^{W}(y)|^{2} dy ds = \left(\int_{0}^{t} (t - s)^{2 - n} ds\right) ||K^{W}||^{2}_{L^{2}(\mathbb{R}^{n})} < \infty.
\end{align}
Note here that the time integral $\int_{0}^{t} (t - s)^{2 - n}$ converges for $n = 1, 2$. However, it is easy to check from the explicit form of the fundamental solution in \eqref{wavefundamental} that $K^{W}$ is in $L^{2}(\mathbb{R}^{n})$ only for dimension $n = 1$. This is the reason why a function-valued mild solution to the stochastic wave equation with additive noise \eqref{easystwave} 
exists only in dimension one.

The stochastic viscous wave equation with additive noise \eqref{easystviscous} is exactly in between these two cases,
with both factors of the unit kernel integrable in both $n = 1$ and $n = 2$. 
Namely, a function-valued mild solution to \eqref{easystviscous} would be defined by
\begin{equation*}
u(t, x) = \int_{0}^{t} \int_{\mathbb{R}^{n}} K_{t - s}(x - y) W(ds, dy),
\end{equation*}
where $K_{t}(x)$ is the kernel given by \eqref{kernel}. Using the scaling relation \eqref{viscouskernelscale} involving the unit kernel \eqref{unitkernel}, we compute, similarly as in the previous examples, that this integral exists only if the following integrability condition is satisfied:
\begin{equation}\label{viscouscalc}
\int_{0}^{t} \int_{\mathbb{R}^{n}} |K_{t - s}(x - y)|^{2} dy ds = \left(\int_{0}^{t} (t - s)^{2 - n} ds\right) ||K||^{2}_{L^{2}(\mathbb{R}^{n})} < \infty.
\end{equation}
However, by Lemma \ref{kernelLq}, $K \in L^{2}(\mathbb{R}^{n})$ for all $n$. Therefore, this integrability condition is satisfied in both dimensions one and two. Thus, the stochastic viscous wave equation with additive noise \eqref{easystviscous} has a function-valued mild solution \textit{both in dimensions one and two}. 

\if 1 = 0 %%%%%%%%%%%%%%%%%%%%
While initially surprising, the above heuristic argument shows why we get a function-valued mild solution in dimension two for \eqref{easystviscous}. 

The heat equation does not have a function-valued mild solution in dimension two because, as one can see in \eqref{heatcalc}, the time integration does not converge, due to the spacetime scaling of the heat equation of one time derivative against two spatial derivatives. However, the heat equation has the desirable property that the kernel is in $L^{2}$. 

Meanwhile, as seen in \eqref{wavecalc}, the wave equation does not have a function-valued mild solution in dimension two because its corresponding kernel is not in $L^{2}$ in dimension two, even though the time integral now does converge (due to the spacetime scaling of only one time derivative against one spatial derivative). 
\fi %%%%%%%%%%%%%%%

%Therefore, the stochastic viscous wave equation with additive noise has a function-valued mild solution in dimension two because the equation has the same desirable spacetime scaling as the wave equation, but the addition of the dissipative viscous term allows the kernel $K(x) \in L^{2}(\mathbb{R}^{2})$, which in contrast to the wave equation. 
%This allows us to get better behavior for the stochastic viscous wave equation. 

{{Finally, we note that the nature of the parabolic damping is essential for the stochastic viscous wave equation to have a function-valued mild solution in dimension two also. In particular, the stochastic damped wave equation
\begin{equation}\label{sdamped}
u_{tt} + cu_{t} - \Delta u = W(dx, dt), \qquad \text{ on } \mathbb{R}^{n}
\end{equation}
with $c > 0$ and zero initial data, has a function-valued mild solution only in dimension one. To see this, we use the explicit formula for the fundamental solution from \cite{Dalang} in frequency space,
\begin{equation}\label{Fourierdamped}
\widehat{K}^{DW}_{t}(\xi) = (c^{2} - |\xi|^{2})^{-1/2} e^{-ct} \sinh\left(t\sqrt{c^{2} - |\xi|^{2}}\right),
\end{equation}
so that the mild solution if it exists for \eqref{sdamped} must be given by
\begin{equation*}
u(t, x) = \int_{0}^{t} \int_{\mathbb{R}^{n}} K^{DW}_{t - s}(x - y) W(ds, dy).
\end{equation*}
{{Here, the superscript $DW$ indicates that we are considering the fundamental solution for the damped wave equation.}} Thus, the integrability condition for the mild solution to exist is that
\begin{equation*}
\int_{0}^{t} \int_{\mathbb{R}^{n}} |K^{DW}_{t - s}(x - y)|^{2} dy ds < \infty.
\end{equation*}
This is equivalent, by Plancherel's theorem, to
\begin{equation}\label{dampedint}
\int_{0}^{t} \int_{\mathbb{R}^{n}} |K^{DW}_{t - s}(x - y)|^{2} dy ds = \int_{0}^{t} \int_{\mathbb{R}^{n}} |K^{DW}_{t - s}(y)|^{2} dy ds = \int_{0}^{t} \int_{\mathbb{R}^{n}} |\widehat{K}^{DW}_{t - s}(\xi)|^{2} d\xi ds < \infty.
\end{equation}

However, the condition \eqref{dampedint} holds only in dimension $n = 1$. This is because $\widehat{K}_{t}(\xi) \notin L^{2}(\mathbb{R}^{n})$ for $n \ge 2$ for all $t > 0$. To see this, note that for $|\xi| > c$, the explicit formula \eqref{Fourierdamped} gives that
\begin{equation*}
\widehat{K}^{DW}_{t}(\xi) = (|\xi|^{2} - c^{2})^{-1/2} e^{-ct} \sin\left(t\sqrt{|\xi|^{2} - c^{2}}\right), \qquad \text{ for } |\xi| > c.
\end{equation*}
We then compute that 
\begin{align*}
||\widehat{K}^{DW}_{t}||_{L^{2}(\mathbb{R}^{n})}^{2} &\ge e^{-2ct} \int_{|\xi| > c} (|\xi|^{2} - c^{2})^{-1} \sin^{2}\left(t\sqrt{|\xi|^{2} - c^{2}}\right) d\xi \\
&= \alpha_{n - 1} e^{-2ct} \int_{c}^{\infty} \frac{r^{n - 1}}{r^{2} - c^{2}} \sin^{2}\left(t\sqrt{r^{2} - c^{2}}\right) dr,
\end{align*}
where $\alpha_{n - 1}$ denotes the surface area of the sphere $S^{n - 1} \subset \mathbb{R}^{n}$. We use a change of variables,
\begin{equation*}
\rho = \sqrt{r^{2} - c^{2}} \ \Longleftrightarrow \ r = \sqrt{\rho^{2} + c^{2}} \qquad d\rho = \frac{r}{\sqrt{r^{2} - c^{2}}} dr.
\end{equation*}
Then,
\begin{equation*}
||\widehat{K}^{DW}_{t}||_{L^{2}(\mathbb{R}^{n})}^{2} \ge \alpha_{n - 1}e^{-2ct} \int_{0}^{\infty} \frac{\sin^{2}(t\rho)}{\rho} (\rho^{2} + c^{2})^{\frac{n}{2} - 1} d\rho.
\end{equation*}
So for $n \ge 2$, we have that $\frac{n}{2} - 1 \ge 0$ and hence for any $t > 0$,
\begin{equation*}
||\widehat{K}^{DW}_{t}||_{L^{2}(\mathbb{R}^{n})}^{2} \ge \alpha_{n - 1}e^{-2ct}c^{n - 2} \int_{0}^{\infty} \frac{\sin^{2}(t\rho)}{\rho} d\rho = \infty \qquad \text{ for } n \ge 2,
\end{equation*}
since this integral diverges. Thus, the integrability condition \eqref{dampedint} does not hold in dimensions two and higher and holds only in dimension one. So the stochastic damped wave equation has a function-valued mild solution only in dimension one. 
}}
%{{TODO: Do we want to say something about the results of the deterministic nonlinear wave equation?
%See Remark 2.2}}

\subsection{Existence and uniqueness for the stochastic viscous wave equation}\label{WellPosed}

While the heuristic argument above was done for a simpler case of additive white noise when $f(u)=1$, we can get an existence and uniqueness result for the more general equation \eqref{sec3eqn} with a general, Lipschitz $f(u)$ in dimensions one and two, by a standard Picard iteration procedure, and by estimates of the kernel. {{We then obtain estimates on the higher moments of the solution for later use in Section \ref{Holder}. Such a Picard iteration and higher moment bound procedure are standard in the stochastic PDE literature \cite{DMini, KMini, Walsh, Dalang}}}. More precisely, we have the following main result.

\begin{theorem}[{\bf{Existence and uniqueness}}]\label{existence}
Let $n = 1$ or $n = 2$, and let $g$ and $h$ be continuous functions in $H^{2}(\mathbb{R}^{n})$. Suppose $f: \mathbb{R} \to \mathbb{R}$ is a Lipschitz continuous function. Then, there exists a  function-valued mild solution to the equation
\begin{equation}\label{ModelEquation}
u_{tt} + \sqrt{-\Delta} u_{t} - \Delta u = f(u) W(dx, dt) \qquad \text{ on } \mathbb{R}^{n}
\end{equation}
with initial data $u(0, x) = g(x)$, $\partial_{t}u(0, x) = h(x)$, which is unique up to stochastic modification.
\end{theorem}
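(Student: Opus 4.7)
The plan is a standard Picard iteration argument tailored to the stochastic viscous wave equation, whose success in both $n=1$ and $n=2$ hinges on the key integrability
\begin{equation*}
\int_0^t \int_{\mathbb{R}^n} K_{t-s}(x-y)^2 \, dy \, ds \;=\; \|K\|_{L^2(\mathbb{R}^n)}^2 \int_0^t (t-s)^{2-n} ds \;<\; \infty,
\end{equation*}
obtained from the scaling relation \eqref{viscouskernelscale} together with Lemma~\ref{kernelLq}. First I would set
\begin{equation*}
u_0(t,x) := \int_{\mathbb{R}^n} J_t(x-y) g(y) \, dy + \int_{\mathbb{R}^n} K_t(x-y) h(y) \, dy,
\end{equation*}
which is a bounded continuous deterministic function on $[0,T] \times \mathbb{R}^n$ by Lemma~\ref{boundedcontinuous}, and then inductively define
\begin{equation*}
u_{k+1}(t,x) \;=\; u_0(t,x) + \int_0^t \int_{\mathbb{R}^n} K_{t-s}(x-y) f(u_k(s,y)) \, W(dy,ds).
\end{equation*}

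At each step I need to verify that the integrand $(s,y,\omega) \mapsto K_{t-s}(x-y) f(u_k(s,y,\omega))$ belongs to the class $\mathcal{P}_W$. I would do this inductively via Proposition~\ref{PW}: using the linear growth \eqref{growth} of $f$ together with the above kernel integrability and the It\^o isometry, one propagates a uniform moment bound $\sup_{(t,x)\in[0,T]\times\mathbb{R}^n} \mathbb{E}[|u_k(t,x)|^2] \le C_T$, which delivers the finite second-moment and square-integrability hypotheses. The continuity in $L^2(\Omega)$ hypothesis reduces, via It\^o's isometry and Lipschitz-ness of $f$, to showing continuity of $(t,x) \mapsto K_{t-\cdot}(x-\cdot) \mathbf{1}_{[0,t]}$ in $L^2([0,T]\times\mathbb{R}^n)$, which follows from the scaling relation and the $L^2$-integrability of $K$ via dominated convergence. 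Joint measurability is then automatic up to modification by Remark~\ref{checkless}.

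For convergence I would estimate $N_k(t) := \sup_{x \in \mathbb{R}^n} \mathbb{E}[|u_{k+1}(t,x) - u_k(t,x)|^2]$. The It\^o isometry together with the Lipschitz constant $L$ of $f$ and the kernel bound yield the recursion
\begin{equation*}
N_k(t) \;\le\; L^2 \|K\|_{L^2}^2 \int_0^t (t-s)^{2-n} N_{k-1}(s) \, ds,
\end{equation*}
and iterating this recursion gives $N_k(t) \le (C_T t)^k / k!$, so that $\{u_k\}$ is Cauchy in $L^2(\Omega)$ uniformly on $[0,T]\times\mathbb{R}^n$. The limit $u$ satisfies \eqref{mildsoln}, and uniqueness up to modification follows from the same Gronwall-type estimate applied to the difference of two hypothetical mild solutions. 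Finally, by replacing the It\^o isometry with the BDG inequality (Theorem~\ref{BDG}) and Minkowski's integral inequality in the above recursion, one upgrades the $L^2$-moment control to an $L^p$-moment bound $\sup_{(t,x)\in[0,T]\times\mathbb{R}^n} \mathbb{E}[|u(t,x)|^p] \le C_{T,p}$ for every $p \ge 2$, which will be indispensable in Section~\ref{Holder} for the Kolmogorov-type H\"older continuity argument. The main obstacle I anticipate is not the Picard step itself but rather the verification of the $L^2(\Omega)$-continuity condition of Proposition~\ref{PW} at each iterate, since this requires delicate bookkeeping of how the time increment $t-s$ interacts with the $x$-translation of $K_{t-s}$ near $s=t$, where the kernel concentrates.
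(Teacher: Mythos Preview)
Your proposal is correct and follows essentially the same route as the paper: Picard iteration starting from the deterministic free solution $u_0$, well-definedness of each iterate via Proposition~\ref{PW}, a Gronwall-type recursion on the $L^2(\Omega)$-increments driven by the kernel bound $\int_{\mathbb{R}^n}K_{t-s}^2(x-y)\,dy=(t-s)^{2-n}\|K\|_{L^2}^2$, and uniqueness by the same estimate; the $L^p$-moment upgrade is also in the paper (stated separately as Theorem~\ref{boundedmoments}), where the paper uses a H\"older splitting $K^2=K^{(2p-4)/p}K^{4/p}$ in place of your Minkowski step, but the effect is the same.

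One small remark on the point you flag as the main obstacle: your reduction of the $L^2(\Omega)$-continuity of $u_{k+1}$ to $L^2$-continuity of $(t,x)\mapsto K_{t-\cdot}(x-\cdot)\mathbf{1}_{[0,t]}$ is valid, but ``dominated convergence'' alone is a bit optimistic in $n=2$ because the singularity of $K_{t-s}$ at $s=t$ moves with $t$; the paper handles this by passing to Fourier variables via Plancherel and splitting the frequency integral (see the Appendix treatment of $J_3,J_4$ for space increments and the change-of-variables argument for time increments), which is exactly the ``delicate bookkeeping'' you anticipate.
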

\begin{proof}
To establish existence, we use  Picard iterations. We begin by setting the first iterate $u_{0}$ to be the deterministic function
\begin{equation}\label{initialiterate}
u_{0}(t, x) = \int_{\mathbb{R}^{n}} J_{t}(x - y) g(y) dy + \int_{\mathbb{R}^{n}} K_{t}(x - y) h(y)dy,
\end{equation}
which is the solution to the deterministic linear {{homogeneous}} viscous wave equation with initial data given by $g$ and $h$. By Lemma \ref{boundedcontinuous}, $u_{0}(t, x)$ is a bounded, continuous function on $[0, T] \times \mathbb{R}^{n}$. 

Then, define the Picard iterates $u_{k}$ for $k \ge 1$ inductively by
\begin{equation}\label{Picarddef}
u_{k}(t, x) = u_{0}(t, x) + \int_{0}^{t} \int_{\mathbb{R}^{n}} K_{t - s}(x - y) f(u_{k - 1}(s, y)) W(dy, ds),
\end{equation}
where $u_{0}$ captures the deterministic evolution of the initial data $g$ and $h$. However, we must check that the stochastic integral on the right hand side makes sense. This is the content of the following lemma.

\begin{lemma}\label{Picardwell}
The Picard iteration procedure \eqref{Picarddef} is well-defined at each step. Furthermore,
\begin{equation*}
\sup_{t \in [0, T]} \sup_{x \in \mathbb{R}^{n}} \mathbb{E}\left(|u_{k}(t, x)|^{2}\right) < \infty, \qquad \text{ for all } k \ge 0, T \ge 0.
\end{equation*}
\end{lemma}

The proof of Lemma~\ref{Picardwell} is given in the Appendix.

\begin{remark}
{{Note that because random variables are only defined up to a measure zero set, the $k$th Picard iterate $\{u_{k}(t, x)\}_{t \in [0, T], x \in \mathbb{R}^{n}}$ is defined only up to stochastic modification. However, as we show in the proof of Lemma \ref{Picardwell}, there exists a modification of $\{u_{k}(t, x)\}_{t \in [0, T], x \in \mathbb{R}^{n}}$ for which the stochastic integral 
\begin{equation*}
\int_{0}^{t} \int_{\mathbb{R}^{n}} K_{t - s}(x - y) f(u_{k}(s, y)) W(dy, ds)
\end{equation*}
is defined, where this stochastic integral is needed to obtain the next iterate $u_{k + 1}$. Hence, when defining $u_{k}$ at each point $(t, x) \in [0, T] \times \mathbb{R}^{n}$ in \eqref{Picarddef}, we choose the modification that allows the stochastic integral needed for the next step of the Picard iteration to be defined. Note that all of the arguments that follow are well suited to the fact that the Picard iterates $u_{k}$ are defined only up to stochastic modification. For example, we will later consider the quantity for each $k$ and $t \in [0, T]$,
\begin{equation*}
\sup_{0 \le s \le t, x \in \mathbb{R}^{n}} \mathbb{E}[(u_{k} - u_{k - 1})^{2}(s, x)],
\end{equation*}
when studying convergence of the iterates, and this quantity is unchanged by stochastic modification of any of the individual iterates.}}
\end{remark}

The next step is to show that $u_{k}(t, x)$ converge in an appropriate sense as $k \to \infty$,
and that the limit is a unique mild solution to the stochastic viscous wave equation \eqref{ModelEquation}. 

{\bf Convergence.} We start by considering the difference between consecutive iterates:
\begin{equation}\label{diffeqn}
u_{k}(t, x) - u_{k - 1}(t, x) = \int_{0}^{t}\int_{\mathbb{R}^{n}} K_{t - s}(x - y) [f(u_{k - 1}(s, y)) - f(u_{k - 2}(s, y))] W(dy, ds),
\ k\ge 2.
\end{equation}
Using the Itô isometry \eqref{Ito} and the fact that $f$ is Lipschitz continuous with a global Lipschitz constant $L$, 
we obtain
\begin{align*}
\mathbb{E}[(u_{k} - u_{k - 1})^{2}(t, x)] &\le L^{2} \mathbb{E}\left(\int_{0}^{t} \int_{\mathbb{R}^{n}} K_{t - s}^{2}(x - y) |u_{k - 1}(s, y) - u_{k - 2}(s, y)|^{2} dy ds\right) \\
&= L^{2} \int_{0}^{t} \int_{\mathbb{R}^{n}} K^{2}_{t - s}(x - y) \mathbb{E}[(u_{k - 1} - u_{k - 2})^{2}(s, y)] dy ds,
\end{align*}
where we used Fubini's theorem in the last step. 

Let
\begin{equation}\label{Hndef}
H_{k}^{2}(t) := \sup_{0 \le s \le t, x \in \mathbb{R}^{n}} \mathbb{E}[(u_{k} - u_{k - 1})^{2}(s, x)].
\end{equation}
We want to show that for every {{$t \ge 0$, $\sum_{k = 1}^{\infty} H_k(t) < \infty$, as this would imply that $\{u_{k}(t, x)\}_{k = 0}^{\infty}$ is a Cauchy sequence in $L^{2}(\Omega)$ for each $t \ge 0, x \in \mathbb{R}^{n}$}}.
Indeed, first notice that the following inequality holds:
\begin{equation}\label{Picardineq}
H_{k}^{2}(t) \le L^{2} \int_{0}^{t} \int_{\mathbb{R}^{n}} K_{t - s}^{2}(x - y) H_{k - 1}^{2}(s) dy ds.
\end{equation}
To further estimate the right hand side, we
estimate the kernel using a calculation as in \eqref{viscouscalc} for dimensions $n = 1$ and $2$ to obtain 
\begin{equation}\label{Ktminuss2}
\int_{\mathbb{R}^{n}} K^{2}_{t - s}(x - y) dy = (t - s)^{2 - n} ||K||^{2}_{L^{2}(\mathbb{R}^{n})} = c_{n} (t - s)^{2 - n} \le c_{n} t^{2 - n},
\end{equation}
for some constant $c_{n}$ depending only on $n$, for $s \in [0, t]$. 
Combining this estimate with \eqref{Picardineq}, one obtains
\begin{equation}\label{inductiveineq}
H_{k}^{2}(t) \le c_{n, t}\int_{0}^{t} H_{k - 1}^{2}(s) ds,\ k = 2,3,\dots
\end{equation}
for a finite constant $c_{n, t}$ that depends only on $t$ and the dimension $n = 1, 2$. 
We will use this inequality inductively, for $k = 2,3,\dots$ to obtain the desired result.
For this purpose, we must first show that $H_{1}(t)$ is finite. In particular, recalling \eqref{Hndef} and using the result in Lemma \ref{Picardwell}, we have
\begin{equation*}
H_{1}^{2}(t) \le 2\left(\sup_{0 \le s \le t, x \in \mathbb{R}^{n}} \mathbb{E}\left(|u_{1}(s, x)|^{2}\right) + \sup_{0 \le s \le t, x \in \mathbb{R}^{n}} \mathbb{E}\left(|u_{0}(s, x)|^{2}\right)\right) = A_{t} < \infty,
\end{equation*}
where $A_{t}$ is a constant depending only on $t$. {{Hence, by inductively using \eqref{inductiveineq}, we have that
\begin{equation}\label{Hk2bound}
H_{k}^{2}(t) \le \frac{A_{t} \cdot (c_{n, t})^{k}t^{k}}{k!}.
\end{equation}
Thus,
\begin{equation}\label{series}
\sum_{k = 1}^{\infty} H_{k}(t) \le A_{t}^{1/2} \sum_{k = 0}^{\infty} \frac{(c_{n, t})^{k/2} t^{k/2}}{(k!)^{1/2}} < \infty,
\end{equation}
as this series converges. Recalling the definition of $H_{k}^{2}(t)$ in \eqref{Hndef}, we conclude that $\{u_{k}(t, x)\}_{k = 0}^{\infty}$ for each $t \ge 0, x \in \mathbb{R}^{n}$ is a Cauchy sequence in $L^{2}(\Omega)$. Hence, $u_{k}(t, x)$ converges in $L^{2}(\Omega)$ 
to some $u(t, x)$ for each $t \ge 0, x \in \mathbb{R}^{n}$.}} 

{\bf Existence of a mild solution.} We now  show that the limit $u(t, x)$ is a mild solution to \eqref{ModelEquation}.
Indeed, after passing to the limit on both sides of \eqref{Picarddef} we immediately see that the left hand side of \eqref{Picarddef} converges to $u(t, x)$ in $L^{2}(\Omega)$.  
To deal with the limit on the right hand side of \eqref{Picarddef}, we first calculate the following estimate:
by the Lipschitz property of $f$ 
and the Itô isometry \eqref{Ito} we have
\begin{multline}\label{mildconv}
\left|\left|\int_{0}^{t} \int_{\mathbb{R}^{n}} K_{t - s}(x - y) [f(u_{k - 1}(s, y)) - f(u(s, y))] W(dy, ds)\right|\right|_{L^{2}(\Omega)} \\
\le L^{2} \int_{0}^{t} \int_{\mathbb{R}^{n}} K_{t - s}^{2}(x - y) \mathbb{E}\left(|u_{k - 1}(y, s) - u(y, s)|^{2}\right) dy ds.
\end{multline}
To further estimate the right hand side, we recall the convergence of the series \eqref{series} and the definition \eqref{Hndef} of $H_{k}^{2}(t)$, to conclude:
\begin{equation}\label{uniformconv}
\mathbb{E}(|u_{k - 1}(s, y) - u(s, y)|^{2}) \to 0 \qquad \text{ as } k \to \infty, \qquad \text{uniformly for $0 \le s \le t$ and $y \in \mathbb{R}^{n}$}.
\end{equation}
Additionally, by recalling \eqref{Ktminuss2}, we get:
\begin{equation}\label{sqestimatekernel}
\int_{0}^{t}\int_{\mathbb{R}^{n}} K^{2}_{t - s}(x - y) dy ds = \frac{c_{n}}{3 - n} t^{3 - n}.
\end{equation}
Therefore, combining this equality 
 with \eqref{uniformconv}, and using it in the right hand side of \eqref{mildconv}, we obtain that for every fixed $t \ge 0$, the following
 convergence result holds:
\begin{equation*}
\int_{0}^{t} \int_{\mathbb{R}^{n}} K_{t - s}(x, y) f(u_{k - 1}(s, y)) W(dy, ds) \to \int_{0}^{t} \int_{\mathbb{R}^{n}} K_{t - s}(x, y) f(u(s, y)) W(dy, ds) \qquad \text{ in } L^{2}(\Omega).
\end{equation*}
This shows that $u$ satisfies \eqref{mildsoln}. 

{{To complete the proof that $u$ is a mild solution, we must show according to Definition \ref{milddef} that $\{u(t, x)\}_{t \in \mathbb{R}^{+} \times \mathbb{R}^{n}}$ is jointly measurable and adapted to $\mathcal{F}_{t}$. Since each $u_{k}$ is adapted to $\mathcal{F}_{t}$, so is the limit $u$. In addition, by the uniform convergence \eqref{uniformconv}, $u$ is continuous in $L^{2}(\Omega)$ on $\mathbb{R}^{+} \times \mathbb{R}^{n}$ since each $u_{k}$ has this property, by the proof of Lemma \ref{Picardwell} in the Appendix. Hence, by Remark \ref{checkless}, $u$ has a stochastic modification that is jointly measurable. This completes the proof that $u$ is a mild solution.}}

{\bf Uniqueness.} Uniqueness follows from Gronwall's inequality. More precisely,
 suppose that $u$ and $v$ are both mild solutions with the same initial data \eqref{initial}. Then, their difference $w := u - v$ satisfies the following stochastic integral equation:
\begin{equation*}
w(t, x) = \int_{0}^{t} \int_{\mathbb{R}^{n}} K_{t - s}(x - y) [f(u(s, y)) - f(v(s, y))] W(dy, ds).
\end{equation*}
Taking the $L^{2}(\Omega)$ norm of both sides, we get that
\begin{equation*}
\mathbb{E}[w^{2}(t, x)] \le L^{2}\int_{0}^{t}\int_{\mathbb{R}^{n}} K_{t - s}^{2}(x - y) \mathbb{E}\left(|u(s, y) - v(s, y)|^{2}\right) dy ds.
\end{equation*}
So defining
\begin{equation*}
H(t) := \sup_{0 \le s \le t, x \in \mathbb{R}^{n}} \mathbb{E}[w^{2}(s, x)],
\end{equation*}
we get after using \eqref{Ktminuss2}, the following inequality:
\begin{equation*}
H(t) \le L^{2}\int_{0}^{t} H(s) \left(\int_{\mathbb{R}^{n}} K^{2}_{t - s}(x - y) dy\right) ds = c_{n, t} L^{2} \int_{0}^{t} H(s) ds.
\end{equation*}
Since $H(0) = 0$, using Gronwall's inequality then implies that $H(t)$ is identically zero for all $t$. 
In particular, $u$ is unique up to stochastic modification since the expectation
 $\mathbb{E}[w^2(t,x)] = 0$ for all $t \ge 0$ and $x\in \mathbb{R}^n$. 
This completes the uniqueness proof, and the proof of Theorem~\ref{existence}. 
\end{proof}

Now that we have shown an appropriate notion of existence and uniqueness of a mild solution for \eqref{sec3eqn} in dimensions one and two, we would like to understand more details of the solution behavior.  In particular, we would like to study the {\emph{H\"{o}lder  continuity of the sample paths}}, defined below in Section~\ref{Holder}. 
In order to do that, it is useful to  obtain uniform boundedness of 
 $L^{p}$ moments of the unique mild solution, for $p \ge 2$, uniformly in space and time on a bounded time interval. The proof of this result will rely on the BDG inequality, stated in Theorem \ref{BDG}. 

\begin{theorem}\label{boundedmoments}
Let $n = 1$ or $2$, and let $g$ and $h$ be continuous functions in $H^{2}(\mathbb{R}^{n})$. Let $u(t, x)$ be the unique function-valued mild solution to \eqref{sec3eqn} with initial data \eqref{initial}. Then, for each $T > 0$ and $p \ge 2$, 
\begin{equation}\label{highermoment}
\sup_{0 \le t \le T} \sup_{x \in \mathbb{R}^{n}} \mathbb{E}(|u(t, x)|^{p}) < \infty.
\end{equation}
\end{theorem}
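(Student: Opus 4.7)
My plan is to follow the standard template for moment estimates of mild solutions of SPDEs, combining the Burkholder--Davis--Gundy inequality from Theorem~\ref{BDG} with the square-integrability of the kernel $K_{t}$ established in \eqref{Ktminuss2} and \eqref{sqestimatekernel}, and concluding with Gronwall's inequality. Starting from the mild solution representation \eqref{mildsoln}, I would write $u(t,x) = u_{0}(t,x) + I(t,x)$, where $u_{0}$ is the deterministic evolution of the initial data and $I(t,x)$ is the stochastic convolution. By Lemma~\ref{boundedcontinuous}, $u_{0}$ is uniformly bounded on $[0,T]\times\mathbb{R}^{n}$, so $|u_{0}(t,x)|^{p}$ contributes a finite constant. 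Applying $|a+b|^{p} \le 2^{p-1}(|a|^{p}+|b|^{p})$ and then Theorem~\ref{BDG} to $I(t,x)$ gives
\begin{equation*}
\mathbb{E}|u(t,x)|^{p} \le C_{p}\|u_{0}\|_{\infty}^{p} + C_{p}\,\mathbb{E}\left(\int_{0}^{t}\int_{\mathbb{R}^{n}} K_{t-s}^{2}(x-y)\,|f(u(s,y))|^{2}\,dy\,ds\right)^{p/2}.
\end{equation*}

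To deal with the $p/2$ power, I would apply Jensen's inequality with respect to the finite measure $K_{t-s}^{2}(x-y)\,dy\,ds$ on $[0,t]\times\mathbb{R}^{n}$, whose total mass $M_{t}:=\frac{c_{n}}{3-n}t^{3-n}$ is finite and bounded by $M_{T}$ by \eqref{sqestimatekernel} (this is where $n=1,2$ is essential). This yields
\begin{equation*}
\mathbb{E}\left(\int_{0}^{t}\!\!\int_{\mathbb{R}^{n}}\! K_{t-s}^{2}(x-y)|f(u(s,y))|^{2}\,dy\,ds\right)^{p/2} \le M_{T}^{p/2-1}\int_{0}^{t}\!\!\int_{\mathbb{R}^{n}}\! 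K_{t-s}^{2}(x-y)\,\mathbb{E}|f(u(s,y))|^{p}\,dy\,ds.
\end{equation*}
Using the Lipschitz growth \eqref{growth} to bound $\mathbb{E}|f(u(s,y))|^{p} \le C_{p,L}(1+\mathbb{E}|u(s,y)|^{p})$, and then using $\int_{\mathbb{R}^{n}}K_{t-s}^{2}(x-y)\,dy = c_{n}(t-s)^{2-n} \le c_{n,T}$ uniformly in $s\in[0,t]$ for $n=1,2$ by \eqref{Ktminuss2}, I obtain the integral inequality
\begin{equation*}
\Phi(t) \le A_{p,T,n} + B_{p,T,n}\int_{0}^{t}\Phi(s)\,ds, \qquad \Phi(t):=\sup_{0\le s\le t}\sup_{x\in\mathbb{R}^{n}}\mathbb{E}|u(s,x)|^{p}.
\end{equation*}

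The main obstacle is that Gronwall's inequality requires \emph{a priori} finiteness of $\Phi(t)$; merely deriving the integral inequality for a possibly infinite quantity is not enough. I would resolve this by repeating the estimate above for the Picard iterates $u_{k}$ from \eqref{Picarddef}, showing inductively that $\Phi_{k}(t):=\sup_{s\le t,x}\mathbb{E}|u_{k}(s,x)|^{p}$ is finite for every $k$ and satisfies $\Phi_{k}(t) \le A + B\int_{0}^{t}\Phi_{k-1}(s)\,ds$ (the base case $k=0$ being covered by Lemma~\ref{boundedcontinuous}). Iterating this recursion produces the uniform bound $\sup_{k}\Phi_{k}(T) \le (A + \Phi_{0}(T))e^{BT} < \infty$. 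Since the proof of Theorem~\ref{existence} gives $u_{k}(t,x) \to u(t,x)$ in $L^{2}(\Omega)$ and hence along a subsequence almost surely, Fatou's lemma yields $\mathbb{E}|u(t,x)|^{p} \le \liminf_{k}\mathbb{E}|u_{k}(t,x)|^{p}$, so $\Phi(t) \le \sup_{k}\Phi_{k}(T) < \infty$. With finiteness secured, Gronwall's inequality applied to the displayed integral inequality yields the conclusion \eqref{highermoment}.
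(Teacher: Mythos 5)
Your proposal is correct and follows essentially the same route as the paper: the BDG inequality of Theorem~\ref{BDG} plus the kernel bounds \eqref{Ktminuss2}--\eqref{sqestimatekernel}, a H\"{o}lder/Jensen step to move the expectation inside the $p/2$ power, and the Picard iterates \eqref{Picarddef} to supply the a priori finiteness that the integral inequality alone cannot. The only difference is the endgame: the paper bounds the increments $\mathbb{E}(|u_{k}-u_{k-1}|^{p})$, shows the resulting series converges factorially, and concludes that $u_{k}$ is Cauchy in the sup-norm space of bounded $L^{p}(\Omega)$-valued functions, whereas you bound $\mathbb{E}(|u_{k}|^{p})$ uniformly in $k$ via the recursion and pass to the limit by Fatou along an a.s.\ convergent subsequence --- both are valid, and your Fatou step correctly sidesteps the need to upgrade the $L^{2}(\Omega)$ convergence of the iterates to $L^{p}(\Omega)$.
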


\begin{proof}
Note that we have already established this result for $p = 2$ {{by using Lemma \ref{Picardwell} and the uniform convergence in $L^{2}(\Omega)$ given by \eqref{uniformconv}}}. 
To prove the higher moment bound \eqref{highermoment}, we reexamine our Picard iterates \eqref{Picarddef}:
\begin{equation*}
u_{k}(t, x) - u_{k - 1}(t, x) = \int_{0}^{t} \int_{\mathbb{R}^{n}} K_{t - s}(x - y) [f(u_{k - 1}(s, y)) - f(u_{k - 2}(s, y))] W(dy, ds).
\end{equation*}
Using the BDG inequality stated in Theorem \ref{BDG}, for $k \ge 2$ we get
\begin{align*}
\mathbb{E}(|u_{k}(t, x) - u_{k - 1}(t, x)|^{p}) &\le c_{p}\mathbb{E}\left[\left(\int_{0}^{t} \int_{\mathbb{R}^{n}} K_{t - s}^{2}(x - y) |f(u_{k - 1}(s, y)) - f(u_{k - 2}(s, y))|^{2} dy ds\right)^{p/2}\right].
\end{align*}
Since $f$ is Lipschitz, we can further estimate the right hand side to obtain:
\begin{align}\label{highermoment1}
\mathbb{E}(|u_{k}(t, x) - u_{k - 1}(t, x)|^{p}) &\le c_{p}L^{p} \cdot \mathbb{E}\left[\left(\int_{0}^{t} \int_{\mathbb{R}^{n}} K_{t - s}^{2}(x - y) |u_{k - 1}(s, y) - u_{k - 2}(s, y)|^{2} dy ds\right)^{p/2}\right].
\end{align}
We would like to move the expectation inside the integral sign on the right hand side, but we cannot do this yet because of the exponent of $p/2$. To handle this, we will separate $K_{t - s}^{2}(x - y)$ into \begin{equation}\label{splitKp}
K_{t - s}^{2}(x - y) = |K_{t - s}(x - y)|^{\frac{2p - 4}{p}} \cdot |K_{t - s}(x - y)|^{\frac{4}{p}}.
\end{equation}
We then apply H\"{o}lder's inequality with the conjugate exponents $p/2$ and $p/(p - 2)$ in \eqref{highermoment1} to obtain
\begin{align}\label{highermoment2}
& \mathbb{E}(|u_{k}(t, x) - u_{k - 1}(t, x)|^{p}) \nonumber \\
&\le c_{p}L^{p} \left(\int_{0}^{t} \int_{\mathbb{R}^{n}} K_{t - s}^{2}(x - y) dy ds\right)^{\frac{p}{2} - 1} \mathbb{E} \left(\int_{0}^{t} \int_{\mathbb{R}^{n}} K_{t - s}^{2}(x - y) |u_{k - 1}(s, y) - u_{k - 2}(s, y)|^{p} dy ds\right) \nonumber \\
&= c_{p}L^{p} \left(\int_{0}^{t} \int_{\mathbb{R}^{n}} K_{t - s}^{2}(x - y) dy ds\right)^{\frac{p}{2} - 1} \left(\int_{0}^{t} \int_{\mathbb{R}^{n}} K_{t - s}^{2}(x - y) \mathbb{E} \left(|u_{k - 1}(s, y) - u_{k - 2}(s, y)|^{p}\right)dy ds\right).
\end{align}
Therefore, defining
\begin{equation*}
J_{k}^{p}(t) := \sup_{0 \le s \le t, x \in \mathbb{R}^{n}} \mathbb{E}(|u_{k}(s, x) - u_{k - 1}(s, x)|^{p}),
\end{equation*}
we get that
\begin{equation}\label{highermoment3}
J_{k}^{p}(t) \le c_{p}L^{p} \left(\int_{0}^{t} \int_{\mathbb{R}^{n}} K_{t - s}^{2}(x - y) dy ds\right)^{\frac{p}{2} - 1} \int_{0}^{t} J_{k - 1}^{p}(s) \left(\int_{\mathbb{R}^{n}} K_{t - s}^{2}(x - y) dy\right) ds.
\end{equation}
Using \eqref{Ktminuss2} and \eqref{sqestimatekernel}, we obtain the following recursive inequality:
\begin{equation}\label{LpGronwall}
J_{k}^{p}(t) \le c_{p, n} t^{(3 - n)\left(\frac{p}{2} - 1\right)} \int_{0}^{t} J_{k - 1}^{p}(s) (t - s)^{2 - n} ds \le c_{p, n} t^{(3 - n)\frac{p}{2} - 1} \int_{0}^{t} J_{k - 1}^{p}(s) ds.
\end{equation}
Note that $J_{1}^{p}(t)$ is finite. Namely, by using the BDG inequality 
from Theorem \ref{BDG} one obtains
\begin{align*}
\mathbb{E}(|u_{1}(t, x) - u_{0}(t, x)|^{p}) &= \mathbb{E} \left(\left|\int_{0}^{t} \int_{\mathbb{R}^{n}} K_{t - s}(x - y) f(u_{0}(s, y)) W(dy, ds)\right|^{p}\right) \\
&\le c_{p} \mathbb{E}\left[\left(\int_{0}^{t} \int_{\mathbb{R}^{n}} K_{t - s}^{2}(x - y) |f(u_{0}(s, y))|^{2} dy ds\right)^{p/2}\right] \\
&\le c_{p} L^{p} \cdot \left[\left(\int_{0}^{t} \int_{\mathbb{R}^{n}} K_{t - s}^{2}(x - y) (1 + |u_{0}(s, y)|)^{2} dy ds\right)^{p/2}\right],
\end{align*}
where we eliminated the expectation because $u_{0}(t, x)$ is deterministic. 
We then use the splitting from \eqref{splitKp} above, 
and the same H\"{o}lder inequality argument as before, to obtain
\begin{align*}
\mathbb{E}(|u_{1}(t, x) &- u_{0}(t, x)|^{p}) 
\le c_{p}L^{p} \left(\int_{0}^{t} \int_{\mathbb{R}^{n}} K_{t - s}^{2}(x - y) dy ds\right)^{\frac{p}{2} - 1} \left(\int_{0}^{t} \int_{\mathbb{R}^{n}} K_{t - s}^{2}(x - y) (1 + |u_{0}(s, y)|)^{p}dy ds\right).
\end{align*}
The right hand-side is uniformly bounded for $t \in [0, T]$ and $x \in \mathbb{R}^{n}$ by Lemma \ref{boundedcontinuous} and \eqref{viscouscalc}. So $J_{1}^{p}(t)$ is finite for each $t \ge 0$. 

{{The recursive inequality \eqref{LpGronwall} implies that for any fixed $T > 0$, we have that for all $0 \le t \le T$ and $k \ge 2$,
\begin{equation}\label{pmomentrecursive}
J_{k}^{p}(t) \le C_{T, p, n} \int_{0}^{t} J_{k - 1}^{p}(s) ds.
\end{equation}
Since $J_{1}^{p}(t)$ is finite and bounded by a constant $A_{T}$ for all $0 \le t \le T$, we then apply \eqref{pmomentrecursive} inductively to conclude that 
\begin{equation}\label{pseries}
\sum_{k = 1}^{\infty} J_{k}(t) \le A_{T}^{1/p}\sum_{k = 0}^{\infty} \frac{(C_{T, p, n})^{k/p}t^{k/p}}{(k!)^{1/p}} < \infty, \qquad \text{ for all } t \in [0, T].
\end{equation}

Since $u_{0}$ is bounded on $[0, T] \times \mathbb{R}^{n}$ for all $T > 0$, deterministic, and continuous by Lemma \ref{boundedcontinuous}, we have that $u_{0} \in C([0, T] \times \mathbb{R}^{n}; L^{p}(\Omega))$. Since $\sum_{k = 1}^{\infty} J_{k}(T) < \infty$, we have that the sequence $u_{k}, k = 1,2,\dots$ is a Cauchy sequence in the complete space of bounded functions on $[0, T] \times \mathbb{R}^{n}$, taking values in $L^{p}(\Omega)$, equipped with the appropriate supremum norm:
\begin{equation*}
||f|| = \sup_{0 \le t \le T} \sup_{x \in \mathbb{R}^{n}} \left(\mathbb{E}(|f(t, x)|^{p})\right)^{1/p}.
\end{equation*}
Hence, the sequence $u_{k}$ converges in this space as $k \to \infty$, and the limit must be $u$. 
Thus, $ ||u|| = \sup_{0 \le t \le T} \sup_{x \in \mathbb{R}^{n}} \left(\mathbb{E}(|u(t, x)|^{p})\right)^{1/p}$ is bounded. }}
\end{proof}

\section{H\"{o}lder continuity of sample paths for the stochastic viscous wave equation}\label{Holder}

In this section we investigate  additional properties of our unique mild solution by focusing on what the sample paths
of the solution  look like. 
In particular, we study H\"{o}lder continuity of sample paths. 
%Now that we have shown an appropriate notion of existence and uniqueness of a mild solution for \eqref{sec3eqn} in dimensions one and two, we want to understand what the sample paths look like. In particular, we want to study the H\"{o}lder regularity continuity of the solution. 
%
Because we are working with a stochastic process, we have to precisely define what we mean by H\"{o}lder continuity of the sample paths. 

For this purpose, we recall the notion of a {\emph{modification}}. 
If $\{X_{i}\}_{i \in I}$ where $I$ is an index set is a stochastic process on a complete probability space $(\Omega, \mathcal{F}, \mathbb{P})$, then $\{\tilde{X_{i}}\}$ is a modification if
$
\mathbb{P}(\tilde{X_{i}} = X_{i}) = 1, \forall  i \in I.
$

We also recall that, given a stochastic process $\{X_{i}\}_{i \in I}$, the {\emph{finite dimensional distributions}} are the distributions of the random vectors
$
(X_{i_{1}}, X_{i_{2}}, ..., X_{i_{k}})
$
for all finite collections $(i_{1}, i_{2}, ..., i_{k})$ of indices in $I$. 

Note that $\{X_{i}\}_{i \in I}$ and a modification have the same finite dimensional distributions. Because the uniqueness result for the equation \eqref{sec3eqn} is up to modification, we will show that \eqref{sec3eqn} has a suitable modification such that the sample paths are H\"{o}lder continuous with a certain degree of H\"{o}lder regularity. 
%In particular, our main goal in this section will be to establish the following theorem, which holds in both one and two spatial dimensions. 

\begin{theorem}[{\bf{H\"{o}lder continuity of sample paths}}]\label{holder}
Let $g, h$ be continuous functions in $H^{2}(\mathbb{R}^{n})$, and let $f: \mathbb{R} \to \mathbb{R}$ be a Lipschitz continuous function. For each $\alpha \in [0, 1)$ in the case of $n = 1$ and for each $\alpha \in [0, 1/2)$ in the case of $n = 2$, the mild solution to \eqref{sec3eqn} has a modification that is (locally) $\alpha$-H\"{o}lder continuous on $\mathbb{R}^{+} \times \mathbb{R}^{n}$ in space and time. 
\end{theorem}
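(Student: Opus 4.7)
The plan is to decompose the mild solution into its deterministic part and its stochastic part, and to treat each separately. Writing
\begin{equation*}
u(t,x) = u_0(t,x) + I(t,x), \qquad I(t,x) := \int_0^t \int_{\mathbb{R}^n} K_{t-s}(x-y)\, f(u(s,y))\, W(dy,ds),
\end{equation*}
Lemma \ref{boundedcontinuous} already gives the desired H\"older regularity of $u_0$ on $[0,T]\times\mathbb{R}^n$ for any $T>0$ (in fact of all orders $\rho\in[0,1]$ when $n=1$ and all $\rho\in[0,1)$ when $n=2$, with the half-order drop in $t$ in the two-dimensional case). The whole problem is therefore to establish H\"older moment bounds for $I$ and then invoke a Kolmogorov-type continuity theorem to produce the desired modification.

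The strategy for $I$ is standard: fix $p\ge 2$ large, and for $(t,x),(t',x')\in[0,T]\times\mathbb{R}^n$, split $I(t',x')-I(t,x)$ into a spatial increment at fixed time and a temporal increment at fixed space. Applying the BDG inequality (Theorem \ref{BDG}) together with the linear growth \eqref{growth} of $f$ and the uniform $L^p$-moment bound on $u$ from Theorem \ref{boundedmoments}, one reduces both increments to deterministic $L^2$ estimates of kernel differences, namely
\begin{equation*}
\Phi_x(h) := \int_0^t\!\!\int_{\mathbb{R}^n} \bigl|K_{t-s}(x+h-y)-K_{t-s}(x-y)\bigr|^2 dy\,ds,
\end{equation*}
\begin{equation*}
\Phi_t(t,t') := \int_t^{t'}\!\!\int_{\mathbb{R}^n} K_{t'-s}^{\,2}(x-y)\,dy\,ds + \int_0^t\!\!\int_{\mathbb{R}^n} \bigl|K_{t'-s}(x-y)-K_{t-s}(x-y)\bigr|^2 dy\,ds.
\end{equation*}
The BDG step then produces $\mathbb{E}|I(t,x+h)-I(t,x)|^p \le C_p\,\Phi_x(h)^{p/2}$ and similarly for the time increment, and Kolmogorov's theorem gives a H\"older modification of exponent strictly less than the one read off from $\Phi_x$ and $\Phi_t$, once $p$ is taken large enough so that the Kolmogorov dimensional factor $(n+1)/p$ is negligible.

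The core technical step, which I expect to be the main obstacle, is getting sharp bounds on $\Phi_x$ and $\Phi_t$ with the right dependence on $h$ and $t'-t$ in both dimensions. The cleanest approach is via Plancherel, using the explicit Fourier symbol
\begin{equation*}
\widehat{K_\tau}(\xi) = \frac{2}{\sqrt{3}\,|\xi|}\, e^{-|\xi|\tau/2}\sin\!\Bigl(\tfrac{\sqrt{3}}{2}|\xi|\tau\Bigr),
\end{equation*}
which satisfies $|\widehat{K_\tau}(\xi)|^2 \le C\min(\tau^2, |\xi|^{-2})\,e^{-|\xi|\tau}$ and, for the time increment, $|\partial_\tau\widehat{K_\tau}(\xi)| \le C e^{-|\xi|\tau/2}$. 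For $\Phi_x(h)$ one writes
\begin{equation*}
\Phi_x(h) = (2\pi)^{-n}\int_{\mathbb{R}^n}|1-e^{ih\cdot\xi}|^2 \Bigl(\int_0^t |\widehat{K_\tau}(\xi)|^2 d\tau\Bigr)\,d\xi,
\end{equation*}
splits the $\xi$-integral at $|\xi|\sim 1/|h|$ and $|\xi|\sim 1/t$, and uses $|1-e^{i\theta}|^2\le\min(|\theta|^2,4)$. A direct computation yields $\Phi_x(h)\le C_T |h|^2\log(T/|h|)$ when $n=1$ and $\Phi_x(h)\le C_T|h|$ when $n=2$, for $|h|\le T$. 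For $\Phi_t$, the first summand is $\frac{c_n}{3-n}(t'-t)^{3-n}$ by Lemma \ref{kernelLq} and the scaling \eqref{viscouskernelscale}, while the second is estimated by Plancherel with the bound on $\partial_\tau\widehat{K_\tau}$, giving overall $\Phi_t\le C_T(t'-t)^2\log(T/(t'-t))$ when $n=1$ and $\Phi_t\le C_T(t'-t)$ when $n=2$. Substituting back produces
\begin{equation*}
\mathbb{E}|I(t,x)-I(t',x')|^p \le C_{p,T}\bigl(|x-x'|^2\log^+ \tfrac{T}{|x-x'|} + |t-t'|^2\log^+ \tfrac{T}{|t-t'|}\bigr)^{p/2}
\end{equation*}
for $n=1$, and with the logarithms removed and the exponent $2$ replaced by $1$ for $n=2$. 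Absorbing the logarithm into an arbitrarily small loss in the exponent yields, for any $\alpha\in[0,1)$ (resp.\ $\alpha\in[0,1/2)$),
\begin{equation*}
\mathbb{E}|I(t,x)-I(t',x')|^p \le C_{p,T,\alpha}\bigl(|x-x'|+|t-t'|\bigr)^{\alpha p},
\end{equation*}
and choosing $p$ so that $\alpha p > n+1$ lets the Kolmogorov continuity theorem produce an $\alpha'$-H\"older modification for every $\alpha'<\alpha-(n+1)/p$. Letting $\alpha\uparrow 1$ (resp.\ $1/2$) and $p\to\infty$ along a countable sequence and taking the intersection of the corresponding full-measure events yields a single modification that is locally H\"older of every exponent in $[0,1)$ (resp.\ $[0,1/2)$) in both variables, and combining this with the regularity of $u_0$ from Lemma \ref{boundedcontinuous} concludes the proof.
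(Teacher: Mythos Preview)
Your proof is correct and follows the same overall architecture as the paper: split $u=u_0+I$, invoke Lemma~\ref{boundedcontinuous} for $u_0$, use the BDG inequality together with the H\"older trick of \eqref{splitKp} and the moment bound of Theorem~\ref{boundedmoments} to reduce the stochastic increment to a deterministic $L^2$ kernel estimate, evaluate that estimate via Plancherel, and finish with Kolmogorov. The only real difference is in how the kernel integrals are computed. The paper introduces a free parameter $\delta\in(0,1)$ and uses the pointwise bound $r^{\delta}e^{-r}\le C_\delta$ to convert the exponential decay $e^{-|\xi|(t'-s)}$ into the power $|\xi|^{-\delta}(t'-s)^{-\delta}$, which after splitting the $\xi$-integral at $|\xi|=|t-t'|^{-1}$ (or $|x-x'|^{-1}$) yields directly $|t-t'|^{1+\delta}$ and $|t-t'|^{\delta}$ for $n=1,2$; for the temporal increment it also splits $\widehat{K_{t-s}}-\widehat{K_{t'-s}}$ into an exponential-difference piece and a sine-difference piece. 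Your route instead integrates in time first (producing $\min(t^3,|\xi|^{-3})$ for $\int_0^t|\widehat{K_\tau}(\xi)|^2\,d\tau$) and uses the single mean-value bound $|\partial_\tau\widehat{K_\tau}(\xi)|\le Ce^{-|\xi|\tau/2}$ together with the trivial bound $|\widehat{K_\tau}(\xi)|\le C|\xi|^{-1}$; this yields the slightly sharper endpoint estimates with logarithms in $n=1$, which you then trade for an $\epsilon$-loss. Both computations are equivalent for the purpose of the theorem; the paper's $\delta$-trick is a bit more streamlined, while your dyadic splitting is perhaps more transparent about where the logarithmic borderline sits. One small point: your description ``the second is estimated by Plancherel with the bound on $\partial_\tau\widehat{K_\tau}$'' needs, as you surely intend, the companion bound $|\widehat{K_\tau}|\le C|\xi|^{-1}$ for large $|\xi|$, since the mean-value bound alone gives a time integral $\int_0^t(t-s)^{-n}\,ds$ that diverges.
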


\begin{remark}
There are analogous results for the stochastic heat and wave equations \eqref{stheat} and \eqref{stwave}, but only in one dimension, since existence and uniqueness hold only in one dimension. For the stochastic {\emph{heat}} equation in $n = 1$ \eqref{stheat}, there is a modification that is $\alpha$-H\"{o}lder continuous in time and $\beta$-H\"{o}lder continuous in space for each $\alpha \in [0, 1/4)$ and each $\beta \in [0, 1/2)$. For the stochastic {\emph{wave}} equation in $n=1$ \eqref{stwave}, there is a modification that is $\alpha$-H\"{o}lder continuous in time and space for $\alpha \in [0, 1/2)$. The difference in the degree of H\"{o}lder regularity is due to the differences in spacetime scaling. 
%Since the viscous wave equation and wave equation share the same spacetime scaling of one time derivative versus one space derivative, it is to be expected that the H\"{o}lder regularity in space and time for the solution to the stochastic viscous wave equation have the same degree of H\"{o}lder regularity. 
We emphasize that our result for the stochastic viscous wave equation \eqref{sec3eqn} 
considers both $n=1$ and $n=2$. 
\end{remark}

The proof of Theorem \ref{holder} follows from a version of the Kolmogorov continuity criterion (see, e.g., Theorem 2.1 in Revuz and Yor \cite{RY}). 

\begin{theorem}[{\bf{Kolmogorov continuity criterion}}]\label{Kolm}
Let $\{X_{i}\}_{i \in [0, 1]^{N}}$ be a real-valued stochastic process. If there exist two positive constants $\gamma$ and $\epsilon$ such that 
\begin{equation*}
\mathbb{E}(|X_{i_{1}} - X_{i_{2}}|^{\gamma}) \le C|i_{1} - i_{2}|^{N + \epsilon},
\end{equation*}
then for each $\alpha$ such that
$
\displaystyle{0 \le \alpha < \frac{\epsilon}{\gamma},}
$
the stochastic process $\{X_{i}\}_{i \in [0, 1]^{N}}$ has a modification that is $\alpha$-H\"{o}lder continuous.
\end{theorem}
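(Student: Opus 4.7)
The plan is to establish the Kolmogorov continuity criterion via the classical dyadic chaining argument. First I would restrict attention to the countable set of dyadic points $D = \bigcup_{n\ge 0} D_n$ where $D_n = \{k/2^n : k = (k_1,\dots,k_N),\ 0 \le k_j \le 2^n\}$, and for each $n$ call two points in $D_n$ \emph{neighbors} if their coordinates differ by at most $2^{-n}$. The idea is to prove that, almost surely, the restriction of $\{X_i\}$ to $D$ is $\alpha$-H\"older continuous, and then extend by uniform continuity to all of $[0,1]^N$ to obtain the desired modification.

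The first key step is to apply Chebyshev's inequality together with the hypothesized moment bound: for any pair of neighbors $i_1, i_2 \in D_n$,
\begin{equation*}
\mathbb{P}\bigl(|X_{i_1} - X_{i_2}| > 2^{-n\alpha}\bigr) \le 2^{n\alpha\gamma}\, \mathbb{E}\bigl(|X_{i_1} - X_{i_2}|^\gamma\bigr) \le C\, 2^{n\alpha\gamma}\bigl(\sqrt{N}\, 2^{-n}\bigr)^{N+\epsilon}.
\end{equation*}
Since $|D_n|$ has cardinality at most $C_N 2^{nN}$ and each point has at most $3^N$ neighbors, a union bound yields
\begin{equation*}
\mathbb{P}\bigl(A_n\bigr) \le C_N\, 2^{-n(\epsilon - \alpha\gamma)},
\end{equation*}
where $A_n$ is the event that some neighboring pair in $D_n$ violates the bound. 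By the condition $\alpha < \epsilon/\gamma$, these probabilities are summable, so by the Borel--Cantelli lemma there is an almost sure event $\Omega^\ast$ on which only finitely many $A_n$ occur. Hence for $\omega \in \Omega^\ast$ there exists a random integer $n_0(\omega)$ such that $|X_{i_1}(\omega) - X_{i_2}(\omega)| \le 2^{-n\alpha}$ for every neighboring pair in $D_n$ with $n \ge n_0(\omega)$.

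The heart of the argument, and the step I expect to be the main obstacle, is the \emph{chaining}: promoting the control on neighboring dyadic pairs to a uniform H\"older bound on arbitrary pairs $i_1, i_2 \in D$ with $|i_1 - i_2|$ sufficiently small. Given such a pair, choose $n \ge n_0$ with $2^{-(n+1)} < |i_1 - i_2| \le 2^{-n}$, and connect $i_1$ to $i_2$ through a finite telescoping path in $\bigcup_{m \ge n} D_m$ by adjusting one coordinate at a time at successively finer scales. Each edge in this chain is a neighboring pair in some $D_m$ with $m \ge n$, so using the triangle inequality and summing the geometric series $\sum_{m\ge n} 2^{-m\alpha} \le C 2^{-n\alpha}$ yields $|X_{i_1}(\omega) - X_{i_2}(\omega)| \le C(\omega) |i_1 - i_2|^\alpha$. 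Absorbing the finitely many exceptional scales $n < n_0(\omega)$ into the random constant $C(\omega)$ using a uniform bound (coming, e.g., from the finite maximum of $|X_i|$ over the finite exceptional pairs) gives global $\alpha$-H\"older continuity on $D$.

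Finally I would define the modification: for $\omega \in \Omega^\ast$ and $i \in [0,1]^N$, set $\tilde X_i(\omega) := \lim_{j\to\infty} X_{i^{(j)}}(\omega)$ along any sequence $i^{(j)} \in D$ converging to $i$; the dyadic H\"older bound makes this limit exist and independent of the sequence, and produces the $\alpha$-H\"older continuous extension (define $\tilde X \equiv 0$ off $\Omega^\ast$). It remains to verify that $\mathbb{P}(\tilde X_i = X_i) = 1$ for every $i \in [0,1]^N$, which I would do by noting that the moment hypothesis with $|i^{(j)} - i| \to 0$ forces $X_{i^{(j)}} \to X_i$ in $L^\gamma$, hence in probability, while simultaneously $X_{i^{(j)}}(\omega) \to \tilde X_i(\omega)$ pointwise on $\Omega^\ast$; uniqueness of limits in probability closes the argument.
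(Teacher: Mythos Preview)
Your argument is the classical dyadic chaining proof of the Kolmogorov continuity criterion and is correct. Note, however, that the paper does not provide its own proof of this theorem: it is stated as a known result with a reference to Revuz and Yor \cite{RY}, Theorem 2.1, where essentially the same dyadic argument appears. So there is nothing to compare against beyond confirming that your outline matches the standard proof.
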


We can extend this to stochastic processes on unbounded Euclidean domains. In particular, we will reframe the Kolmogorov continuity criterion for our current case of a stochastic process indexed by $(t, x) \in \mathbb{R}^{+} \times \mathbb{R}^{n}$. This is similar to Theorem 2.5.1 in \cite{DK}.
%except that index set is $\mathbb{R}^{n}_{+}$ instead of $\mathbb{R}^{+} \times \mathbb{R}^{n}$. However,  as we will see in the explicit proof given below, the proof follows similarly by the same patching argument used in the proof of Theorem 2.5.1 in \cite{DK}.

\begin{corollary}\label{Kolm2}
Let $\{X(t, x)\}_{(t, x) \in \mathbb{R}^{+} \times \mathbb{R}^{n}}$ be a real-valued stochastic process. {{If there exist two positive constants $\gamma$ and $\epsilon$ such that for each compact set $K \subset \mathbb{R}^{+} \times \mathbb{R}^{n}$, 
\begin{equation*}
\mathbb{E}(|X(t, x) - X(s, y)|^{\gamma}) \le C_{K}|(t, x) - (s, y)|^{n + 1 + \epsilon} \qquad \text{ for all } (t, x), (s, y) \in K,
\end{equation*}
where $C_{K}$ can depend on $K$,}} then for each $\alpha$ such that
$
\displaystyle{0 \le \alpha < \frac{\epsilon}{\gamma}},
$
the stochastic process $\{X(t, x)\}_{(t, x) \in \mathbb{R}^{+} \times \mathbb{R}^{n}}$ has a modification 
{{$ \{ \tilde{X}(t,x) \}_{(t,x)\in \mathbb{R}^{+} \times \mathbb{R}^{n}}$  }}that is locally $\alpha$-H\"{o}lder continuous on $\mathbb{R}^{+} \times \mathbb{R}^{n}$.
\end{corollary}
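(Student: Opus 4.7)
The plan is to reduce the statement on the unbounded domain $\mathbb{R}^{+} \times \mathbb{R}^{n}$ to the classical Kolmogorov criterion on the unit cube by a countable covering argument, and then glue the local modifications into a single global one.

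First, I would exhaust $\mathbb{R}^{+} \times \mathbb{R}^{n}$ by an increasing sequence of compact boxes, for instance $Q_{m} = [0, m] \times [-m, m]^{n}$ for $m \in \mathbb{N}$, so that every compact subset of $\mathbb{R}^{+} \times \mathbb{R}^{n}$ is contained in some $Q_{m}$. For each fixed $m$, I would reparameterize $Q_{m}$ by an affine bijection onto $[0, 1]^{n+1}$; this changes the metric only by a bounded factor depending on $m$, so the pullback process $\{Y^{(m)}(z)\}_{z \in [0, 1]^{n+1}}$ satisfies
\begin{equation*}
\mathbb{E}\!\left(|Y^{(m)}(z_{1}) - Y^{(m)}(z_{2})|^{\gamma}\right) \le C_{Q_{m}}' |z_{1} - z_{2}|^{n+1+\epsilon},
\end{equation*}
with the exponent $n+1+\epsilon$ preserved because only the constant absorbs the scaling. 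Applying Theorem \ref{Kolm} with $N = n+1$ then produces, for each $m$, a modification $\tilde{Y}^{(m)}$ that is $\alpha$-H\"older continuous on $[0, 1]^{n+1}$ for every $\alpha \in [0, \epsilon/\gamma)$; pulling back yields an $\alpha$-H\"older modification $\tilde{X}^{(m)}$ of $X$ on $Q_{m}$.

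The second step is to glue the sequence $\tilde{X}^{(m)}$ into a single modification on the whole domain. Since $\tilde{X}^{(m)}$ and $\tilde{X}^{(m+1)}$ are both modifications of $X$, we have $\mathbb{P}(\tilde{X}^{(m)}(t,x) = \tilde{X}^{(m+1)}(t,x)) = 1$ for each individual $(t,x) \in Q_{m}$. Taking a countable dense subset $D_{m} \subset Q_{m}$ (e.g.\ rationals), the event $\{\tilde{X}^{(m)} \equiv \tilde{X}^{(m+1)}$ on $D_{m}\}$ has probability one, and by continuity of both modifications on $Q_{m}$ this forces pointwise equality on all of $Q_{m}$ almost surely. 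Intersecting these full-probability events over all $m$ produces a single full-probability event on which the $\tilde{X}^{(m)}$ are mutually consistent; on this event I would define $\tilde{X}(t,x)$ to be the common value $\tilde{X}^{(m)}(t,x)$ for any $m$ with $(t,x) \in Q_{m}$, and extend arbitrarily (say by $0$) off the event. This $\tilde{X}$ is a modification of $X$ on $\mathbb{R}^{+} \times \mathbb{R}^{n}$, and for every compact $K$ it coincides almost surely with $\tilde{X}^{(m)}$ on any $Q_{m} \supset K$, so it inherits $\alpha$-H\"older continuity on $K$.

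The main obstacle, though minor, is the gluing step: one has to be careful that the zero-probability exceptional sets from the countably many pairwise comparisons can be combined into a single exceptional set, and that on the good event the values across overlapping patches truly agree everywhere (not just on a dense set). The continuity of each $\tilde{X}^{(m)}$ on its compact domain, combined with separability of $\mathbb{R}^{+} \times \mathbb{R}^{n}$, makes this routine. The rescaling step is also something to watch: one must check that affine rescaling preserves the moment inequality with the same exponent $n+1+\epsilon$ on the right-hand side, which it does because the inequality is homogeneous in the Euclidean distance. Everything else is a direct appeal to Theorem \ref{Kolm}.
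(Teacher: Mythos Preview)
Your proposal is correct and follows essentially the same patching argument as the paper: exhaust the domain by increasing compact cubes, apply Theorem~\ref{Kolm} on each, and glue the resulting modifications. The only cosmetic difference is in the consistency step---you pass through a countable dense set and invoke continuity directly, whereas the paper argues by contradiction via rational balls on which the two modifications would have disjoint range; both routes accomplish the same thing.
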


\begin{proof}
Since the Kolmogorov continuity theorem appears more often in the form listed in Theorem \ref{Kolm}, we provide an explicit proof of Corollary \ref{Kolm2}, using an idea called a patching argument, as described on pg.~160 of \cite{DK}.
The corollary follows from the Kolmogorov continuity criterion in Theorem \ref{Kolm} by considering compact cubes, for example 
\begin{equation*}
A_{k} := [0, k] \times [-k/2, k/2]^{n} \subset \mathbb{R}^{+} \times \mathbb{R}^{n},
\end{equation*}
that increase to all of $\mathbb{R}^{+} \times \mathbb{R}^{n}$. 
{{We will construct the desired modification $ \{ \tilde{X}(t,x) \}_{(t,x)\in \mathbb{R}^{+} \times \mathbb{R}^{n}}$
as the limit $k\to\infty$ of $\alpha$-H\"{o}lder continuous modifications $\{X_{k}(t, x)\}$ defined for ${(t, x) \in A_{k}}$, constructed as follows.}}

Fix $\alpha$ such that $0 \le \alpha < \frac{\epsilon}{\gamma}$. By the usual Kolmogorov continuity criterion given in Theorem \ref{Kolm}, we can construct a modification $\{X_{k}(t, x)\}_{(t, x) \in A_{k}}$ of $\{X(t, x)\}_{(t, x) \in A_{k}}$ that is $\alpha$-H\"{o}lder continuous on $A_{k}$. 
The modifications $\{X_{k}(t, x)\}_{(t, x) \in A_{k}}$ in particular are continuous.  

We claim that any two of these modifications $X_{k}(t, x)$ and $X_{l}(t, x)$
must agree with probability one on their overlap because they are continuous modifications. Otherwise there exists a ball with rational radius and center with rational coordinates on which the two modifications have disjoint range with positive probability.
More precisely, consider $k \le l$ so that $A_{k} \subset A_{l}$ is the overlap. We claim that 
\begin{equation*}
\mathbb{P}(X_{k}(t, x) = X_{l}(t, x) \text{ for all } (t, x) \in A_{k}) = 1.
\end{equation*}
We argue by contradiction. Suppose that $\mathbb{P}(X_{k}(t_0, x_0) \ne X_{l}(t_0, x_0) \text{ for some } (t_0, x_0) \in A_{k}) > 0$. Since $X_{k}(t, x)$ and $X_{l}(t, x)$ are continuous on $A_{k}$, for every outcome $\omega \in \Omega$ for which $X_{k}(t_0, x_0) \ne X_{l}(t_0, x_0) \text{ for some } (t_0, x_0) \in A_{k}$, we can find an open ball $B_{r}(q)$ with rational radius $r$ 
centered at a point $q=(t,x) \in \mathbb{Q}^{+} \times \mathbb{Q}^{n} \cap A_{k} \subset \mathbb{R}^{+} \times \mathbb{R}^{n}$, such that $X_{k}$ and $X_{l}$ have \textit{``disjoint" range} on the ball $B_{r}(q) \cap A_{k}$ in the sense that there exist two closed intervals $K_{1} \subset \mathbb{R}$ and $K_{2} \subset \mathbb{R}$ such that
\begin{equation*}
X_{k}(B_{r}(q) \cap A_{k}) \subset K_{1}, \qquad X_{l}(B_{r}(q) \cap A_{k}) \subset K_{2}, \qquad K_{1} \cap K_{2} = \varnothing.
\end{equation*}
Hence,
\begin{align*}
\mathbb{P}(\bigcup_{r \in \mathbb{Q}, q \in \mathbb{Q}^{+} \times \mathbb{Q}^{n} \cap A_{k}} 
&\{X_{k} \text{ and } X_{l} \text{ have ``disjoint" range on } B_{r}(q) \cap A_{k} \}) \\
\ge \mathbb{P}&(X_{k}(t_0, x_0) \ne X_{l}(t_0, x_0) \text{ for some } (t_0, x_0) \in A_{k}) > 0.
\end{align*}
Therefore,  {{by the countability of the index set}}, there exist $r_{0} \in \mathbb{Q}^{+}$ and $q_{0} \in \mathbb{Q}^{+} \times \mathbb{Q}^{n} \cap A_{k}$ such that 
\begin{equation*}
\mathbb{P}(X_{k} \text{ and } X_{l} \text{ have ``disjoint" range on } B_{r_{0}}(q_{0}) \cap A_{k}) > 0.
\end{equation*}
But this implies that $\mathbb{P}(X_{k}(q_{0}) \ne X_{l}(q_{0})) > 0$, which contradicts that $\{X_{k}(t, x)\}_{(t, x) \in A_{k}}$ and $\{X_{l}(t, x)\}_{(t, x) \in A_{l}}$ are modifications of the same stochastic process on $A_{k}$ and $A_{l}$ respectively, since $q_{0} \in A_{k} \subset A_{l}$. Therefore, $X_{k} = X_{l}$ on $A_{k}$ almost surely.

This implies that, with probability one (up to a null set), any two modifications from this collection of modifications $\{X_{k}(t, x)\}_{(t, x) \in A_{k}}$ on increasing cubes must agree. 

To define the desired modification $\tilde{X}(t, x)$ we now focus on the null sets 
$E_{k, l}$ for $k < l$ on which the modifications $\{X_{k}(t, x)\}_{(t, x) \in A_{k}}$ and $\{X_{l}(t, x)\}_{(t, x) \in A_{l}}$ do not agree on $A_{k}$. Define 
\begin{equation*}
E = \bigcup_{k < l \text{ and } k, l \in \mathbb{Z}^{+}} E_{k, l}
\end{equation*}
and note that $\mathbb{P}(E) = 0$. Then, the desired modification $\{\tilde{X}(t, x)\}_{(t, x) \in \mathbb{R}^{+} \times \mathbb{R}^{n}}$ is
\begin{align*}
\tilde{X}(t, x, \omega) &= \lim_{k \to \infty} X_{k}(t, x, \omega) \ \qquad \text{ for } \omega \in E^{c}, \\
\tilde{X}(t, x, \omega) &= 0, \qquad \qquad \qquad \qquad \text{ for } \omega \in E.
\end{align*}
This limit exists since the sequence $X_{k}(t, x, \omega)$ for $\omega \in E^{c}$ is eventually constant, because the modifications $\{X_{k}(t, x)\}_{(t, x) \in A_{k}}$ all agree pairwise on their common domains for $\omega \in E^{c}$. It is easy to check that $\{\tilde{X}(t, x)\}_{(t, x) \in \mathbb{R}^{+} \times \mathbb{R}^{n}}$ is a modification that is $\alpha$-H\"{o}lder continuous, by using the properties that each of the $\{X_{k}(t, x)\}_{(t, x) \in A_{k}}$ are modification on $A_{k}$ that are $\alpha$-H\"{o}lder continuous.
This completes the proof of the corollary.
\end{proof}

%This argument above is known in the literature as a patching argument, as mentioned on pg.~160 of \cite{DK}, since we are patching together the H\"{o}lder continuous versions on increasing compact cubes to get the full process on $\mathbb{R}^{+} \times \mathbb{R}^{n}$.

\subsection{Proof of Theorem \ref{holder}}

We will prove the theorem for $n = 1$ and $n = 2$. Though the specific estimates will be slightly different for each dimension, the general computations are the same for both and hence we prove the results for $n = 1$ and $n = 2$ simultaneously.

By Corollary \ref{Kolm2}, it follows that to prove Theorem \ref{holder}, it suffices to show that for all $T > 0$, for all $\delta \in (0, 1)$, and for all $p \ge 2$, there exists a constant $C_{T, p, \delta}$ depending on $T$, $p$, and $\delta$ such that:
\begin{enumerate}
\item The following two estimates hold if $n = 1$:
\begin{equation}\label{timeincrest}
\mathbb{E}(|u(t, x) - u(t', x)|^{p}) \le C_{T, p, \delta} |t - t'|^{\frac{(1 + \delta) p}{2}}, \text{ for all } t, t' \in [0, T], \text{ and } x \in \mathbb{R},
\end{equation}
\begin{equation}\label{spaceincrest}
\mathbb{E}(|u(t, x) - u(t, x')|^{p}) \le C_{T, p, \delta} |x - x'|^{\frac{(1 + \delta) p}{2}}, \text{ for all } t \in [0, T], \text{ and } x, x' \in \mathbb{R}.
\end{equation}
\item The following two estimates hold if $n = 2$:
\begin{equation}\label{timeincrest2}
\mathbb{E}(|u(t, x) - u(t', x)|^{p}) \le C_{T, p, \delta} |t - t'|^{\frac{\delta p}{2}}, \text{ for all } t, t' \in [0, T], \text{ and } x \in \mathbb{R}^{2},
\end{equation}
\begin{equation}\label{spaceincrest2}
\mathbb{E}(|u(t, x) - u(t, x')|^{p}) \le C_{T, p, \delta} |x - x'|^{\frac{\delta p}{2}}, \text{ for all } t \in [0, T], \text{ and } x, x' \in \mathbb{R}^{2}.
\end{equation}
\end{enumerate}

%We establish the estimates \eqref{timeincrest} and \eqref{spaceincrest} for the time and spatial increments in dimension $n = 1$. These estimates are similar to the estimates for the stochastic heat equation in \cite{KMini}. 

 \textbf{{Estimate for the time increments.}} To prove estimates \eqref{timeincrest} and \eqref{timeincrest2}, we consider for $p \ge 2$,
\begin{equation*}
\mathbb{E}(|u(t, x) - u(t', x)|^{p}), \qquad \text{ for } t, t' \in [0, T],
\end{equation*}
for $n = 1, 2$, where $T > 0$ is fixed but arbitrary. Recall from the definition of a mild solution \eqref{mildsoln} that 
\begin{equation*}
u(t, x) = u_{0}(t, x) + \int_{0}^{t} \int_{\mathbb{R}^{n}} K_{t - s}(x - y) f(u(s, y)) W(dy, ds),
\end{equation*}
where $u_{0}(t, x)$ is the deterministic function solving the homogeneous deterministic viscous wave equation with initial data $g, h$. 
We assume that $0 \le t' < t \le T$ and express the time increment as
\begin{multline*}
u(t, x) - u(t', x) = u_{0}(t, x) - u_{0}(t', x) \\
+ \int_{0}^{t'} \int_{\mathbb{R}^{n}} [K_{t - s}(x - y) - K_{t' - s}(x - y)]f(u(s, y)) W(dy, ds) + \int_{t'}^{t} \int_{\mathbb{R}^{n}} K_{t - s}(x - y) f(u(s, y)) W(dy, ds).
\end{multline*}
Next, we use the BDG inequality from Theorem \ref{BDG}, along with $(a + b + c)^{p} \le c_{p}(|a|^{p} + |b|^{p} + |c|^{p})$ for $a, b, c \ge 0$, to obtain
\begin{align}\label{timesplit}
\mathbb{E}&\left(|u(t, x) - u(t', x)|^{p}\right) \nonumber \\
\le c_{p} &\Bigg[|u_{0}(t, x) - u_{0}(t', x)|^{p} + \mathbb{E}\left(\int_{0}^{t'}\int_{\mathbb{R}^{n}} |K_{t - s}(x - y) - K_{t' - s}(x - y)|^{2} |f(u(s, y))|^{2} dy ds\right)^{p/2} \nonumber \\
 + \mathbb{E}&\left(\int_{t'}^{t} \int_{\mathbb{R}^{n}} |K_{t - s}(x - y)|^{2} |f(u(s, y))|^{2} dy ds\right)^{p/2}\Bigg] := c_{p}(I_{1} + I_{2} + I_{3}).
\end{align}
By Lemma \ref{boundedcontinuous}, there exists $C_{T}$ such that 
\begin{equation}\label{I11d}
I_{1} = |u_{0}(t, x) - u_{0}(t', x)|^{p} \le C_{T} |t - t'|^{p}, \qquad \text{ in the case of } n = 1,
\end{equation}
and for every $\delta \in (0, 1)$, there exists a constant $C_{T, \delta}$ depending only on $T > 0$ and $\delta$ such that
\begin{equation}\label{I12d}
I_{1} = |u_{0}(t, x) - u_{0}(t', x)|^{p} \le C_{T, \delta} |t - t'|^{\frac{\delta p}{2}}, \qquad \text{ in the case of } n = 2.
\end{equation}
For the integral in $I_{2}$ defined in \eqref{timesplit}, we use the same idea as in \eqref{splitKp} and separate the term involving the kernel into two factors by using H\"{o}lder's inequality with $p/2$ and $p/(p - 2)$ to obtain
\begin{align*}
I_{2} \le &\left(\int_{0}^{t'}\int_{\mathbb{R}^{n}} |K_{t - s}(x - y) - K_{t' - s}(x - y)|^{2} dy ds\right)^{\frac{p}{2} - 1} \\
\cdot \mathbb{E}&\left(\int_{0}^{t'} \int_{\mathbb{R}^{n}} |K_{t - s}(x - y) - K_{t' - s}(x - y)|^{2} |f(u(s, y))|^{p}dy ds \right).
\end{align*}
In the second factor, we can move the expectation into the integrand, and use the Lipschitz property of $f$ to obtain 
that for all $s \in [0, t'], y \in \mathbb{R}^{n}$, the following estimate holds:
\begin{equation}\label{fp}
\mathbb{E}\left(|f(u(s, y))|^{p}\right) \le 2^{p}L^{p} \mathbb{E}(1 + |u(s, y)|^{p}) = C_{T, p} < \infty,
\end{equation}
where the last inequality follows from the boundedness of $p$th moments in \eqref{highermoment}. Therefore,
\begin{equation*}
I_{2} \le C_{T, p}\left(\int_{0}^{t'}\int_{\mathbb{R}^{n}} |K_{t - s}(x - y) - K_{t' - s}(x - y)|^{2} dy ds\right)^{\frac{p}{2}}.
\end{equation*} 
Using Plancherel's theorem and absorbing constants into $C_{T, p}$, 
\begin{equation}\label{I2}
I_{2} \le C_{T, p} \left(\int_{0}^{t'}\int_{\mathbb{R}^{n}} \left|e^{-\frac{|\xi|(t - s)}{2}} \frac{\sin\left(\frac{\sqrt{3}}{2}|\xi|(t - s)\right)}{|\xi|} - e^{-\frac{|\xi|(t' - s)}{2}} \frac{\sin\left(\frac{\sqrt{3}}{2}|\xi|(t' - s)\right)}{|\xi|}\right|^{2}d\xi ds\right)^{\frac{p}{2}}. 
\end{equation}
Continuing to absorb constants into $C_{T, p}$ as necessary, we separate this into
\begin{equation}\label{I2break}
I_{2} \le C_{T, p}(J_{1} + J_{2})^{\frac{p}{2}},
\end{equation}
where
\begin{equation}\label{J1}
J_{1} = \int_{0}^{t'} \int_{\mathbb{R}^{n}} \frac{\sin^{2}\left(\frac{\sqrt{3}}{2} |\xi| (t - s)\right)}{|\xi|^{2}}\left|e^{-\frac{|\xi|(t - s)}{2}} - e^{-\frac{|\xi|(t' - s)}{2}}\right|^{2} d\xi ds,
\end{equation}
\begin{equation}\label{J2}
J_{2} = \int_{0}^{t'} \int_{\mathbb{R}^{n}} e^{-|\xi|(t' - s)}\left(\frac{\sin\left(\frac{\sqrt{3}}{2}|\xi|(t - s)\right)}{|\xi|} - \frac{\sin\left(\frac{\sqrt{3}}{2}|\xi|(t' - s)\right)}{|\xi|}\right)^{2} d\xi ds.
\end{equation}
To estimate $J_{1}$, we first simplify $J_{1}$ to get
\begin{equation*}
J_{1} = \int_{0}^{t'} \int_{\mathbb{R}^{n}} e^{-|\xi|(t' - s)}\frac{\sin^{2}\left(\frac{\sqrt{3}}{2} |\xi| (t - s)\right)}{|\xi|^{2}}\left(1 - e^{-\frac{|\xi|(t - t')}{2}}\right)^{2} d\xi ds.
\end{equation*}
Next, we use the fact that there exists a uniform constant $C$ such that
\begin{equation}\label{expbound}
0 \le 1 - e^{-r} \le \min(1, r), \qquad \text{ for all } r \ge 0.
\end{equation}
In addition, there exists a uniform constant depending only on $\delta \in (0, 1)$ such that 
\begin{equation}\label{expdecay}
r^{\delta}e^{-r} \le C_{\delta}, \qquad \text{ for all } r > 0, \delta \in (0, 1).
\end{equation}
Thus, for each $\delta \in (0, 1)$ we have:
\begin{align}\label{J1timeest}
J_{1} &\le C_{\delta} \int_{0}^{t'} \frac{1}{(t' - s)^{\delta}} \int_{\mathbb{R}^{n}} \frac{1}{|\xi|^{\delta}} \cdot \frac{1}{|\xi|^{2}} \min\left(1, \frac{1}{4} |\xi|^{2} (t - t')^{2}\right) d\xi ds \nonumber \\
&\le C_{\delta} \int_{0}^{t'} \frac{1}{(t' - s)^{\delta}} \int_{\mathbb{R}^{n}} \frac{1}{|\xi|^{2 + \delta}} \min(1, |\xi|^{2}(t - t')^{2}) d\xi ds \nonumber \\
& = C_{\delta} \left(\int_{0}^{t'} \frac{1}{(t' - s)^{\delta}} \left(\int_{|\xi| \le (t - t')^{-1}} \frac{1}{|\xi|^{\delta}} (t - t')^{2} d\xi + \int_{|\xi| \ge (t - t')^{-1}} \frac{1}{|\xi|^{2 + \delta}} d\xi\right) ds \right),
\end{align}
where $C_{T, \delta}$ is a constant depending on $\delta \in (0, 1)$, and on the fixed but arbitrary $T > 0$. Note that we have restricted $\delta$ to the range of $\delta \in (0, 1)$ so that the appropriate integrals converge in both spatial dimension $n = 1$ and $n = 2$. Computing the integrals in \eqref{J1timeest} gives
\begin{equation}\label{J11d}
J_{1} \le C_{\delta} \left(\int_{0}^{t'} \frac{1}{(t' - s)^{\delta}}(t - t')^{1 + \delta} ds \right) = C_{T, \delta} |t - t'|^{1 + \delta}, \qquad \text{ for } n = 1,
\end{equation}
\begin{equation}\label{J12d}
J_{1} \le C_{\delta} \left(\int_{0}^{t'} \frac{1}{(t' - s)^{\delta}}(t - t')^{\delta} ds \right) = C_{T, \delta} |t - t'|^{\delta}, \qquad \text{ for } n = 2.
\end{equation}

We now consider $J_{2}$ as defined in \eqref{J2}. By the mean value theorem, 
\begin{equation}\label{sindiff}
\left|\sin\left(\frac{\sqrt{3}}{2} |\xi| (t - s)\right) - \sin\left(\frac{\sqrt{3}}{2}|\xi|(t' - s)\right)\right| \le \min\left(2, \frac{\sqrt{3}}{2}|\xi|(t - t')\right) \le \min(2, |\xi|(t - t')).
\end{equation}
Combining the estimates \eqref{expdecay} and  \eqref{sindiff} gives for arbitrary $\delta \in (0, 1)$,
\begin{equation*}
J_{2} \le C_{\delta} \int_{0}^{t'} \frac{1}{(t' - s)^{\delta}} \int_{\mathbb{R}^{n}} \frac{1}{|\xi|^{\delta}} \cdot \frac{1}{|\xi|^{2}} \min\left(1, |\xi|^{2} (t - t')^{2}\right) d\xi ds.
\end{equation*}
The rest proceeds exactly as for the computation for $J_{1}$, see \eqref{J1timeest}, and thus we obtain
\begin{equation*}
J_{2} \le C_{T, \delta} |t - t'|^{1 + \delta} \qquad \text{ for } n = 1, \qquad \qquad J_{2} \le C_{T, \delta}|t - t'|^{\delta} \qquad \text{ for } n = 2,
\end{equation*}
for $C_{\delta, T}$ depending only on $\delta \in (0, 1)$ and $T$. Substituting into \eqref{I2break}, we have for arbitrary $\delta \in (0, 1)$,
\begin{equation}\label{I21d2d}
I_{2} \le C_{T, p, \delta} |t - t'|^{\frac{(1 + \delta) p}{2}} \qquad \text{ for $n = 1$}, \qquad \qquad I_{2} \le C_{T, p, \delta} |t - t'|^{\frac{\delta p}{2}} \qquad \text{ for $n = 2$}.
\end{equation}

For $I_{3}$ as defined in \eqref{timesplit}, we use the idea from \eqref{splitKp}, combined with
the Lipschitz property of $f$, the boundedness of $p$th moments of  $u(t, x)$ on finite time intervals,
and a calculation similar to \eqref{viscouscalc} to obtain for $n = 1, 2$,
\begin{align}\label{I31d2d}
I_{3} &\le c_{p}\left(\int_{t'}^{t} \int_{\mathbb{R}^{n}} |K_{t - s}(x - y)|^{2} dy ds\right)^{\frac{p}{2} - 1}\left(\int_{t'}^{t} \int_{\mathbb{R}^{n}} |K_{t - s}(x - y)|^{2} \mathbb{E}(|f(u(s, y))|^{p}) dy ds\right) \nonumber \\
&\le c_{T, p} \left(\int_{t'}^{t} \int_{\mathbb{R}^{n}} |K_{t - s}(x - y)|^{2} dy ds\right)^{\frac{p}{2}} = c_{T, p} \left(\int_{t'}^{t} (t - s)^{2 - n} ds\right)^{p/2} ||K||_{L^{2}(\mathbb{R}^{n})}^{p} = c_{T, p} |t - t'|^{\frac{p}{2}(3 - n)}.
\end{align}
The estimates \eqref{I11d}, \eqref{I12d}, \eqref{I21d2d}, \eqref{I31d2d} for $I_{1}$, $I_{2}$, and $I_{3}$ and \eqref{timesplit} establish the desired time increment estimates \eqref{timeincrest} for $n = 1$ and \eqref{timeincrest2} for $n = 2$.

 \textbf{{Estimate for the spatial increments.}}
We examine the spatial regularity of the stochastic solution $u(t, x)$ by establishing \eqref{spaceincrest} and \eqref{spaceincrest2}. 
 For $0 \le t \le T$ and $x, x' \in \mathbb{R}^{n}$, we have that
\begin{equation*}
u(t, x) - u(t, x') = (u_{0}(t, x) - u_{0}(t, x')) + \int_{0}^{t} \int_{\mathbb{R}^{n}} (K_{t - s}(x - y) - K_{t - s}(x' - y)) f(u(s, y)) W(dy, ds),
\end{equation*}
and hence, for $p \ge 2$, 
\begin{align}\label{spaceincsplit}
\mathbb{E}&(|u(t, x) - u(t, x')|^{p}) \nonumber \\
&= c_{p}\left(|u_{0}(t, x) - u_{0}(t, x')|^{p} + \mathbb{E}\left(\left|\int_{0}^{t} \int_{\mathbb{R}^{n}} (K_{t - s}(x - y) - K_{t - s}(x' - y)) f(u(s, y)) W(dy, ds)\right|^{p}\right)\right) \nonumber \\
&:= c_{p}(I_4 + I_5).
\end{align}

We bound $I_4$ by using Lemma \ref{boundedcontinuous} to obtain
\begin{equation}\label{I41d}
I_4 = |u_{0}(t, x) - u_{0}(t, x')|^{p} \le C_{T} |x - x'|^{p} \qquad \text{ for } n = 1,
\end{equation}
and for arbitrary $\delta \in (0, 1)$, 
\begin{equation}\label{I42d}
I_4 = |u_{0}(t, x) - u_{0}(t, x')|^{p} \le C_{T, \delta} |x- x'|^{\frac{\delta p}{2}}, \qquad \text{ for } n = 2.
\end{equation}
To estimate $I_5$, we use the BDG inequality stated in Theorem \ref{BDG} to obtain
\begin{equation*}
I_5 \le \mathbb{E}\left(\int_{0}^{t} \int_{\mathbb{R}^{n}} |K_{t - s}(x - y) - K_{t - s}(x' - y)|^{2} |f(u(s, y))|^{2} dy ds\right)^{\frac{p}{2}}.
\end{equation*}
By using the same computation as in \eqref{splitKp},
\begin{align*}
I_5 \le &\left(\int_{0}^{t} \int_{\mathbb{R}^{n}} |K_{t - s}(x - y) - K_{t - s}(x' - y)|^{2} dy ds\right)^{\frac{p}{2} - 1} \\
&\cdot \left(\int_{0}^{t} \int_{\mathbb{R}^{n}} |K_{t - s}(x - y) - K_{t - s}(x' - y)|^{2} \mathbb{E}\left(|f(u(s, y))|^{p}\right) dy ds\right).
\end{align*} 
Using the higher moment bound on $u$ as in \eqref{fp}, we have
\begin{align*}
I_5 &\le C_{T, p} \left(\int_{0}^{t} \int_{\mathbb{R}^{n}} |K_{t - s}(x - y) - K_{t - s}(x' - y)|^{2}dy ds\right)^{\frac{p}{2}} \\
&= C_{T, p} \left(\int_{0}^{t} \int_{\mathbb{R}^{n}} |K_{t - s}(y) - K_{t - s}(y + x' - x)|^{2} dy ds\right)^{\frac{p}{2}}.
\end{align*}
Absorbing constants into $C_{T, p}$ as necessary and using Plancherel's formula gives that
\begin{align*}
I_5 &\le C_{T, p} \left(\int_{0}^{t} \int_{\mathbb{R}^{n}} \left|e^{-\frac{|\xi|}{2}(t - s)}\frac{\sin\left(\frac{\sqrt{3}}{2}|\xi|(t - s)\right)}{|\xi|}(1 - e^{i\xi \cdot (x' - x)})\right|^{2} d\xi ds\right)^{\frac{p}{2}} \\
&= C_{T, p}\left(\int_{0}^{t} \int_{\mathbb{R}^{n}} e^{-|\xi|(t - s)}\frac{1}{|\xi|^{2}}\left[1 - \cos(\xi \cdot (x' - x))\right] d\xi ds\right)^{\frac{p}{2}}.
\end{align*}
We use the inequality $1 - \cos(\xi \cdot (x' - x)) \le \min(2, |\xi|^{2} |x' - x|^{2})$ and \eqref{expdecay} to obtain for $\delta \in (0, 1)$, 
\begin{align*}
I_5 &\le C_{T, p}\left(\int_{0}^{t} \int_{\mathbb{R}^{n}} e^{-|\xi|(t - s)}\frac{1}{|\xi|^{2}}\left[1 - \cos(\xi \cdot (x' - x))\right] d\xi ds\right)^{\frac{p}{2}} \\
&\le C_{T, p, \delta} \left(\int_{0}^{t} \frac{1}{(t - s)^{\delta}} \int_{\mathbb{R}^{n}} \frac{1}{|\xi|^{2 + \delta}}\min(2, |\xi|^{2}|x' - x|^{2}) d\xi ds\right)^{\frac{p}{2}} \\
&\le C_{T, p, \delta} \left(\int_{0}^{t} \frac{1}{(t - s)^{\delta}} \left(\int_{|\xi| \le |x - x'|^{-1}} \frac{1}{|\xi|^{\delta}} |x' - x|^{2} d\xi + \int_{|\xi| \ge |x - x'|^{-1}} \frac{2}{|\xi|^{2 + \delta}} d\xi\right) ds\right)^{\frac{p}{2}}.
\end{align*}
These integrals converge for $n = 1, 2$ since $\delta \in (0, 1)$. We can compute these integrals to obtain
\begin{equation}\label{I51d}
I_{5} \le C_{T, p, \delta} \left(\int_{0}^{t} \frac{1}{(t - s)^{\delta}} |x - x'|^{1 + \delta} ds\right)^{\frac{p}{2}} = C_{T, p, \delta} |x - x'|^{\frac{(1 + \delta) p}{2}}, \qquad \text{ for } n = 1,
\end{equation}
\begin{equation}\label{I52d}
I_{5} \le C_{T, p, \delta} \left(\int_{0}^{t} \frac{1}{(t - s)^{\delta}} |x - x'|^{\delta} ds\right)^{\frac{p}{2}} = C_{T, p, \delta} |x - x'|^{\frac{\delta p}{2}}, \qquad \text{ for } n = 2.
\end{equation}
The estimates \eqref{spaceincsplit}, \eqref{I41d}, \eqref{I42d}, \eqref{I51d}, and \eqref{I52d} establish the required spatial increment estimates \eqref{spaceincrest} for $n = 1$ and \eqref{spaceincrest2} for $n = 2$. This completes the proof of Theorem \ref{holder}.

\section{Conclusion}
We have shown that a Cauchy problem for the stochastically perturbed viscous wave equation \eqref{model} has a unique (up to a modification) mild solution in both $n=1$ and $n=2$, and that the stochastic mild solution has a modification which is $\alpha$-H\"{o}lder continuous, where  
 $\alpha$-H\"{o}lder continuity is up to $\alpha = 1$ in $n=1$, and up to $\alpha = 1/2$ in $n=2$. 
 This result is significant, especially for $n=2$, since it indicates that
stochastically perturbed fluid-structure interaction problems involving viscous, incompressible fluids at low-to-medium Reynolds numbers, will have H\"{o}lder continuous solutions for almost all realizations of sample paths,
even in the case when the stochasticity in the forcing (or data)  is represented by the very rough 
spacetime white noise. We remark that this would not be the case if the structure itself, modeled by the stochastically 
perturbed wave equation in $n=2$, were considered without the fluid,
as it is well known that for the spacetime white noise perturbed wave and heat equations,
stochastic mild solutions do not exist in dimensions $n=2$ and higher. 
It is the coupled problem that provides the 
right scaling and sufficient dissipation that damps high-order frequencies exponentially fast in time,
thereby allowing a unique stochastic, H\"{o}lder continuous mild solution to exist for almost all realizations.

\section{Appendix}
\begin{proof}[Proof of Lemma 3.1]
To prove this lemma we use induction, presented in several steps below.

{\bf Step 1.}
For the inductive step, suppose that the following properties of $u_{k - 1}$ are satisfied:
\\
\phantom {1} \hskip 0.3in 1. $u_{k - 1}$ is adapted to the filtration $\{\mathcal{F}_{t}\}_{t \ge 0}$, \\
\phantom {1} \hskip 0.3in 2. $u_{k - 1}$ is jointly measurable, \\
\phantom {1} \hskip 0.3in 3. $u_{k - 1}$ satisfies {{for every $T > 0$,}}
\begin{equation}\label{inductive1}
\sup_{t \in [0, T]} \sup_{x \in \mathbb{R}^{n}} \mathbb{E}\left(|u_{k - 1}(s, y))|^{2}\right) := C_{k - 1, T} < \infty.
\end{equation}
\\
\phantom {1} \hskip 0.3in 4. $u_{k - 1}$ is continuous as a map from $(t, x) \in [0, T] \times \mathbb{R}^{n}$ to $L^{2}(\Omega)$, for arbitrary $T > 0$.
\\
Certainly, the base case holds. This is because $u_{0}$ is deterministic, hence it  immediately satisfies the adaptedness and joint measurability conditions. For \eqref{inductive1}, we can get rid of the expectation, since $u_{0}$ is deterministic. Then, \eqref{inductive1} follows from the fact that $u_{0}$ is bounded by Lemma \ref{boundedcontinuous}. The $L^{2}(\Omega)$ continuity, since $u_{0}$ is deterministic, follows from the continuity statement in Lemma \ref{boundedcontinuous}. 

{\bf Step 2.} We want to show that with this inductive assumption, the stochastic integral in \eqref{Picarddef} is well-defined. 
So given arbitrary $t > 0$, we must show that the integrand $K_{t - s}(x - y) f(u_{k - 1}(y, s))$ for $s \in [0, t), y \in \mathbb{R}^{n}$, satisfies the conditions in Proposition \ref{PW}. {{Recall from \eqref{Picarddef} that $(t, x)$ is a fixed but arbitrary point in $\mathbb{R}^{+} \times \mathbb{R}^{n}$ and $(s, y)$ here indicates the variables that are integrated against the spacetime white noise.}} Since the kernel $K_{t-s}(x-y)$ is singular at $s = t$ and $x = y$,
we first show that the conditions in Proposition \ref{PW} hold for $s\in[0,t)$ and then also in the limit $s \to t$.
We start by showing that the conditions in Proposition \ref{PW} hold for $s\in[0,t)$:
\begin{itemize}
\item Since $u_{k - 1}(s, y)$ for $s \in [0, t), y \in \mathbb{R}^{n}$ is adapted, so is $K_{t - s}(x, y) f(u_{k - 1}(s, y))$ since $f$ is continuous.
\item For each $s \in [0, t)$, $y \in \mathbb{R}^{n}$, we have
\begin{equation*}
\mathbb{E}\left(|K_{t - s}(x - y) f(u_{k - 1}(s, y))|^{2}\right) \le 2L^{2}C_{t - s} \left(1 + \mathbb{E}\left(|u_{k - 1}(s, y)|^{2}\right)\right) < \infty,
\end{equation*}
by the inductive assumption \eqref{inductive1}. Here, $L$ is the Lipschitz constant for $f$, and we used the fact that $K_{t - s}(\cdot)$ is bounded by a finite constant $C_{t - s}$ depending on the parameter $t - s$. 
\item To show that $K_{t - s}(x - y)f(u_{k - 1}(y, s))$ is $L^{2}(\Omega)$-continuous for $s \in [0, t)$ and $y \in \mathbb{R}^{n}$,
we fix $s_{0} \in [0, t)$ and $y_{0} \in \mathbb{R}^{n}$ and compute
\begin{align*}
\mathbb{E}(|K_{t - s_{1}}(x - y_{1})&f(u_{k - 1}(s_{1}, y_{1})) - K_{t - s_{0}}(x - y_{0})f(u_{k - 1}(s_{0}, y_{0}))|^{2})\\
&\le 2\mathbb{E}\left(|K_{t - s_{1}}(x - y_{1})f(u_{k - 1}(s_{1}, y_{1})) - K_{t - s_{1}}(x - y_{1})f(u_{k - 1}(s_{0}, y_{0}))|^{2}\right) \\
&+ 2\mathbb{E}\left(|K_{t - s_{1}}(x - y_{1})f(u_{k - 1}(s_{0}, y_{0})) - K_{t - s_{0}}(x - y_{0})f(u_{k - 1}(s_{0}, y_{0}))|^{2}\right).
\end{align*}
Using the Lipschitz condition for $f$ in the first term on the right hand side, and the fact that $f$ is linearly bounded by $|f(x)| \le L(1 + |x|)$ in the second term on the right hand side, 
\begin{align*}
\mathbb{E}(|K_{t - s_{1}}(x - y_{1})&f(u_{k - 1}(s_{1}, y_{1})) - K_{t - s_{0}}(x - y_{0})f(u_{k - 1}(s_{0}, y_{0}))|^{2}) \\
&\le 2L^{2}|K_{t - s_{1}}(x - y_{1})|^{2} \mathbb{E}\left(|u_{k - 1}(s_{1}, y_{1}) - u_{k - 1}(s_{0}, y_{0})|^{2}\right) \\
&+ 4|K_{t - s_{1}}(x - y_{1}) - K_{t - s_{0}}(x - y_{0})|^{2} \mathbb{E}\left(L^{2}(1 + |u_{k - 1}(s_{0}, y_{0})|^{2})\right).
\end{align*}
By using the inductive assumption \eqref{inductive1}, in the second term above we can bound the expectation of $|u_{k-1}(s_0,y_0)|^2$ to obtain the following estimate:
\begin{align}\label{L2contest}
\mathbb{E}(|K_{t - s_{1}}(x - y_{1})&f(u_{k - 1}(y_{1}, s_{1})) - K_{t - s_{0}}(x - y_{0})f(u_{k - 1}(s_{0}, y_{0}))|^{2}) \nonumber \\
&\le 2L^{2}|K_{t - s_{1}}(x - y_{1})|^{2} \mathbb{E}\left(|u_{k - 1}(s_{1}, y_{1}) - u_{k - 1}(s_{0}, y_{0})|^{2}\right) \nonumber \\
&+ \tilde{C}_{k - 1, t} |K_{t - s_{1}}(x - y_{1}) - K_{t - s_{0}}(x - y_{0})|^{2},
\end{align}
for some constant $\tilde{C}_{k - 1, t}$ depending only on $k - 1$ and $t$. 
To show continuity, we want to make the right hand-side of \eqref{L2contest} arbitrarily small whenever $|(s_{1}, y_{1}) - (s_{0}, y_{0})|$ is small.
Indeed, in the first term on the right hand-side, $K_{t - s}(x - y)$ is locally bounded for $s \in [0, t)$ and $y \in \mathbb{R}^{n}$, and $u_{k - 1}$ is $L^{2}(\Omega)$ continuous by the inductive assumption, 
so the first term on the right hand-side of \eqref{L2contest} can be made arbitrarily small for 
$|(s_{1}, y_{1}) - (s_{0}, y_{0})| < \delta$, for $\delta$ sufficiently small. 
This is also true for the second term on the right hand side because $K_{t - s}(x - y)$ is continuous for $s \in [0, t)$ and $y \in \mathbb{R}^{n}$. This establishes the claim.
\item To check the square integrability condition, we compute
\begin{align}\label{squareintegrabilityPicard}
\mathbb{E} \int_{0}^{t} \int_{\mathbb{R}^{n}} & |K_{t - s}(x - y)|^{2} |f(u_{k - 1}(s, y))|^{2} dy ds \nonumber \\
&\le 2L^{2} \int_{0}^{t} \int_{\mathbb{R}^{n}} |K_{t - s}(x - y)|^{2} \left(1 + \mathbb{E}\left(|u_{k - 1}(s, y)|^{2}\right)\right) dy ds \nonumber \\
&= 2L^{2} \left(1 + \sup_{s' \in [0, t], y' \in \mathbb{R}^{n}} \mathbb{E}\left(|u_{k - 1}(s', y')|\right)^{2}\right) \left(\int_{0}^{t} (t - s)^{2 - n} ds\right)||K||^{2}_{L^{2}(\mathbb{R}^{n})} < \infty,
\end{align}
where we used the identity in \eqref{viscouscalc}, the fact that $n = 1$ or $2$, Lemma \ref{kernelLq}, and the inductive assumption \eqref{inductive1}. 
\end{itemize}

To show that the stochastic integral in \eqref{Picarddef} is still well-defined for $s \in [0,t]$, 
we claim that this stochastic integral can be defined as the $L^{2}(\Omega)$ limit of stochastic integrals whose integrands are explicitly in the admissible class $\mathcal{P}_{W}$ of integrands. To see this, choose an increasing sequence $t_{i}, i = 1,2,\dots$ of positive real numbers such that $t_{i} \to t$ as $i\to\infty$. Note that 
\begin{equation}\label{Cauchydef}
\int_{0}^{t_{i}} \int_{\mathbb{R}^{n}} K_{t - s}(x - y) f(u_{k - 1}(s, y)) W(dy, ds)
\end{equation}
is a well-defined stochastic integral by the properties verified above, by Proposition \ref{PW}. By \eqref{squareintegrabilityPicard},
\begin{equation*}
\mathbb{E} \int_{t_{i}}^{t} \int_{\mathbb{R}^{n}} |K_{t - s}(x - y)|^{2} |f(u_{k - 1}(s, y))|^{2} dy ds \to 0,\ {\rm as} \ t_i \to t.
\end{equation*}
Hence, since $\mathcal{P}_{W}$ is a closed Banach space, the integrand
in \eqref{Picarddef} is in $\mathcal{P}_{W}$, and can be defined rigorously as the limit of the Cauchy sequence \eqref{Cauchydef} in $L^{2}(\Omega)$, by the Itô isometry and the finiteness of the quantity in \eqref{squareintegrabilityPicard}.

{\bf Step 3.} It remains to show that $u_{k}(t, x)$ satisfies the conditions in the inductive assumption in Step 1. Indeed, $u_{k}(t, x)$ is adapted by the construction of the stochastic integral. Joint measurability (up to modification) will follow from the later verification of continuity in $L^{2}(\Omega)$, as noted in Remark \ref{checkless} below. Thus, properties 1 and 2 in Step 1 are verified. 

To verify property 3 in Step 1, we check that for each $T > 0$, 
\begin{equation}\label{nextinductive}
\sup_{t \in [0, T]} \sup_{x \in \mathbb{R}^{n}} \mathbb{E}\left(|u_{k}(t, x)|^{2}\right) = C_{k, T} < \infty.
\end{equation}
This follows by direct calculation. Fix arbitrary $T > 0$ and consider $t \in [0, T]$, $x \in \mathbb{R}^{n}$. By \eqref{Ito} and \eqref{Picarddef}, we get
\begin{align}\label{inductivestep}
\mathbb{E}\left(|u_{k}(t, x)|^{2}\right) &= 2\mathbb{E}(|u_{0}(t, x)|^{2}) + 2\mathbb{E} \int_{0}^{t} \int_{\mathbb{R}^{n}} |K_{t - s}(x - y)|^{2} |f(u_{k - 1}(s, y))|^{2} dy ds \nonumber \\
&= 2|u_{0}(t, x)|^{2} + 2\mathbb{E} \int_{0}^{t} \int_{\mathbb{R}^{n}} |K_{t - s}(x - y)|^{2} |f(u_{k - 1}(s, y))|^{2} dy ds.
\end{align}
Note that by Lemma \ref{boundedcontinuous}, $u_{0}(t, x)$ is bounded on $t \in [0, T]$, $x \in \mathbb{R}^{n}$. So we consider the remaining term. Using the calculation in \eqref{squareintegrabilityPicard} and the bound $|f(x)| \le L(1 + |x|)$ for some $L$ by the Lipschitz condition,
\begin{align*}
\mathbb{E} \int_{0}^{t} \int_{\mathbb{R}^{n}} &|K_{t - s}(x, y)|^{2} |f(u_{k - 1}(s, y))|^{2} dy ds \\
&\le 2L^{2} \left(1 + \sup_{s' \in [0, t], y' \in \mathbb{R}^{n}} \mathbb{E}\left(|u_{k - 1}(s', y')|\right)^{2}\right) \left(\int_{0}^{t} (t - s)^{2 - n} ds\right)||K||^{2}_{L^{2}(\mathbb{R}^{n})} \le \tilde{C}_{k, T},
\end{align*}
where $\tilde{C}_{k, T}$ is the finite constant, independent of $t \in [0, T]$ and $x \in \mathbb{R}^{n}$, 
\begin{equation*}
\tilde{C}_{k, T} := 2L^{2} \left(1 + \sup_{s' \in [0, T], y' \in \mathbb{R}^{n}} \mathbb{E}\left(|u_{k - 1}(s', y')|\right)^{2}\right) \left(\int_{0}^{T} (T - s)^{2 - n} ds\right)||K||^{2}_{L^{2}(\mathbb{R}^{n})},
\end{equation*}
which is finite by the inductive assumption \eqref{inductive1}, Lemma \ref{kernelLq}, and the fact that $n = 1$ or $2$. This verifies \eqref{nextinductive}.

Finally, we show that property 4 in Step 1 holds, namely that the mapping
$(t, x) \mapsto u_{k}(t, x)$ taking values in $L^{2}(\Omega)$ is continuous on $\mathbb{R}^{+} \times \mathbb{R}^{n}$. We decompose $u_{k}(t, x)$ in \eqref{Picarddef} as
\begin{equation*}
u_{k}(t, x) = u_{0}(t, x) + \int_{0}^{t}\int_{\mathbb{R}^{n}} K_{t - s}(x - y) f(u_{k - 1}(s, y)) W(dy, ds) := u_{0}(t, x) + u^{stoch}_{k}(t, x).
\end{equation*}
Because $u_{0}(t, x)$ is deterministic and continuous by Lemma \ref{boundedcontinuous}, it suffices to show that $u^{stoch}_{k}(t, x)$ is continuous in $L^{2}(\Omega)$. Consider $t_{0} > 0$ and $x_{0} \in \mathbb{R}^{n}$. 
(The argument for $t_0 = 0$ is similar.) Let 
\begin{equation}\label{Sdelta}
S_{\delta} = \{(t, x) \in \mathbb{R}^{+} \times \mathbb{R}^{n} : |t - t_{0}| < \delta, |x - x_{0}| < \delta\}.
\end{equation}
Continuity would follow if we can show that given arbitrary $\epsilon > 0$, there exists $\delta > 0$ sufficiently small such that
\begin{equation}\label{cont1}
\mathbb{E}\left(|u_{k}^{stoch}(t, x_{0}) - u_{k}^{stoch}(t_{0}, x_{0})|^{2}\right) < \epsilon, \qquad \text{ for } |t - t_{0}| < \delta,
\end{equation}
\begin{equation}\label{cont2}
\mathbb{E}\left(|u_{k}^{stoch}(t, x_{1}) - u_{k}^{stoch}(t, x_{0})|^{2}\right) < \epsilon, \qquad \text{ for all } (t, x_{1}), (t, x_{0}) \in S_{\delta}.
\end{equation}

Denote
\begin{equation}\label{T*}
T^{*} = t_{0} + 1.
\end{equation}

Let us show the first part of the continuity estimate \eqref{cont1}. For every $\epsilon > 0$, we need to find a $\delta > 0$ such 
that \eqref{cont1} holds. We begin by first assuming that $\delta > 0 $ is such that
\begin{equation}\label{delta1}
\delta < \min \{ 1, \frac{t_0}{2} \} = \frac{t_{0}}{2} \wedge 1
\end{equation}
(the reason for this choice will be clear later), 
and we denote 
\begin{equation}\label{tau}
\tau_{m} = t \wedge t_{0} > 0, \qquad \tau_{M} = t \vee t_{0} > 0,
\end{equation}
where $t \vee t_{0} := \max \{t, t_0\}$.
By using a change of variables, 
\begin{align*}
| & u_{k}^{stoch}(t, x_{0}) - u_{k}^{stoch}(t_{0}, x_{0})| \\
&= \left|\int_{0}^{\tau_{m}} \int_{\mathbb{R}^{n}} K_{\tau_{m} - s}(x - y) [f(u_{k - 1}(s + \tau_{M} - \tau_{m}, x_{0})) - f(u_{k - 1}(s, x_{0})) W(dy, ds) \right| \\
&+ \left|\int_{0}^{\tau_{M} - \tau_{m}} \int_{\mathbb{R}^{n}} K_{\tau_{M} - s}(x - y) f(u_{k - 1}(s, x_{0})) W(dy, ds)\right|.
\end{align*}
Using the Lipschitz condition and the growth condition  \eqref{growth} on $f$, together with the Itô isometry \eqref{Ito},
we can bound the expectation $\mathbb{E}(|u_{k}^{stoch}(t, x_{0}) - u_{k}^{stoch}(t_{0}, x_{0})|^{2})$ by two integrals, 
$J_1$ and $J_2$, one integrated from $0$ to $\tau_m$ and the other from $\tau_m$ to $\tau_M$:
\begin{align}\label{J12}
\mathbb{E}(| &u_{k}^{stoch}(t, x_{0}) - u_{k}^{stoch}(t_{0}, x_{0})|^{2}) \nonumber \\
&\le 2L^{2}\int_{0}^{\tau_{m}} \int_{\mathbb{R}^{n}} |K_{\tau_{m} - s}(x - y)|^{2} \mathbb{E} \left(|u_{k - 1}(s + \tau_{M} - \tau_{m}, x_{0}) - u_{k - 1}(s, x_{0})|^{2}\right) dy ds \nonumber \\
&+ 4L^{2}\int_{0}^{\tau_{M} - \tau_{m}} \int_{\mathbb{R}^{n}} |K_{\tau_{M} - s}(x - y)|^{2} \left(1 + \mathbb{E}\left(|u_{k - 1}(s, x_{0})|^{2}\right)\right) dy ds := 2L^{2}(J_{1} + 2J_{2}).
\end{align}
To handle $J_{1}$, as long as  the condition \eqref{delta1} on $\delta$ 
is satisfied, we have $\tau_{m} \le T^{*}$, where $T^*$ is defined in \eqref{T*}. Hence, by \eqref{viscouscalc},
\begin{equation*}
\int_{0}^{\tau_{m}} \int_{\mathbb{R}^{n}} |K_{\tau_{m} - s}(x - y)|^{2} dy ds \le \left(\int_{0}^{T^{*}} (T^{*} - s)^{2 - n} ds\right) \cdot ||K||^{2}_{L^{2}(\mathbb{R}^{n})} := C_{1}.
\end{equation*}
Since continuous functions are uniformly continuous on compact sets, by using the fact that $u_{k - 1}(t, x)$ is $L^{2}(\Omega)$ continuous, along with $0 < \tau_{m} < \tau_{M} \le T^{*}$ and $|\tau_{M} - \tau_{m}| < \delta$, we can make 
\begin{equation}\label{J1cond}
J_{1} < \frac{\epsilon}{4L^{2}},
\end{equation}
by choosing $\delta < \frac{t_{0}}{2} \wedge 1$ sufficiently small so that
\begin{equation*}
\mathbb{E} \left(|u_{k - 1}(t_{1}, x_{0}) - u_{k - 1}(t_{2}, x_{0})|^{2}\right) < C_{1}^{-1}\frac{\epsilon}{4L^{2}},
\text{ whenever } |t_{1} - t_{2}| < \delta \text{ and } t_{1}, t_{2} \in [0, T^{*}].
\end{equation*}
{{To handle $J_{2}$, we note that by \eqref{nextinductive} and a calculation similar to \eqref{viscouscalc}, 
\begin{align*}
J_{2} \le (1 + C_{k - 1, T^{*}}) \int_{0}^{\tau_{M} - \tau_{m}} \int_{\mathbb{R}^{n}} |K_{\tau_{M} - s}(x - y)|^{2} dy ds &= \tilde{C}_{k - 1, T^{*}} \int_{0}^{\tau_{M} - \tau_{m}} (\tau_{M} - s)^{2 - n} ds \\
&= \frac{1}{3 - n}\tilde{C}_{k - 1, T^{*}} (\tau_{M}^{3 - n} - \tau_{m}^{3 - n}).
\end{align*}
Therefore, because $|\tau_{M} - \tau_{m}| < \delta$ and $0 \le \tau_{m} \le \tau_{M} < T^{*}$ by \eqref{T*} and \eqref{delta1}, we can choose $\delta$ satisfying condition \eqref{delta1} sufficiently small such that 
\begin{equation}\label{J2cond}
J_{2} < \frac{\epsilon}{8L^{2}}.
\end{equation}
So by \eqref{J12}, \eqref{J1cond}, \eqref{J2cond}, we can choose $\delta$ sufficiently small so that \eqref{cont1} holds. }}

Next, we verify \eqref{cont2}. By the Itô isometry \eqref{Ito} and the bound in Lemma \ref{Picardwell}, 
\begin{align*}
\mathbb{E}\left(|u_{k}^{stoch}(t, x_{1}) - u_{k}^{stoch}(t, x_{0})|^{2}\right) &= \int_{0}^{t} \int_{\mathbb{R}^{n}} |K_{t - s}(x_{1} - y) - K_{t - s}(x_{0} - y)|^{2} \mathbb{E}\left(|u_{k - 1}(s, y)|^{2}\right) dy ds \\
&\le C_{k - 1, T^{*}} \int_{0}^{t} \int_{\mathbb{R}^{n}} |K_{t - s}(x_{1} - y) - K_{t - s}(x_{0} - y)|^{2} dy ds \\
&= C_{k - 1, T^{*}} \int_{0}^{t} \int_{\mathbb{R}^{n}} |K_{s}(y) - K_{s}(y + x_{0} - x_{1})|^{2} dy ds.
\end{align*}
Recall that the Fourier transform of $K_{t}(x)$ is $e^{-\frac{|\xi|}{2}t}\frac{\sin\left(\frac{\sqrt{3}}{2}|\xi|t\right)}{\frac{\sqrt{3}}{2}|\xi|}$. Therefore, by Plancherel's formula,
\begin{align}\label{spacecontest}
\mathbb{E}(|u_{k}^{stoch}(t, x_{1}) &- u_{k}^{stoch}(t, x_{0})|^{2}) \le C_{k - 1, T^{*}} \int_{0}^{t} \int_{\mathbb{R}^{n}} \left|e^{-\frac{|\xi|}{2}s}\frac{\sin\left(\frac{\sqrt{3}}{2}|\xi|s\right)}{\frac{\sqrt{3}}{2}|\xi|}\right|^{2} |1 - e^{i(x_{0} - x_{1})\cdot \xi}|^{2} d\xi ds \nonumber \\
&= 2C_{k - 1, T^{*}} \int_{0}^{t} \int_{\mathbb{R}^{n}} \left|e^{-\frac{|\xi|}{2}s}\frac{\sin\left(\frac{\sqrt{3}}{2}|\xi|s\right)}{\frac{\sqrt{3}}{2}|\xi|}\right|^{2} [1 - \cos((x_{0} - x_{1})\cdot \xi)] d\xi ds \nonumber \\
&\le  2C_{k - 1, T^{*}} \int_{0}^{T^{*}} \int_{\mathbb{R}^{n}} \left|e^{-\frac{|\xi|}{2}s}\frac{\sin\left(\frac{\sqrt{3}}{2}|\xi|s\right)}{\frac{\sqrt{3}}{2}|\xi|}\right|^{2} [1 - \cos((x_{0} - x_{1})\cdot \xi)] d\xi ds \nonumber \\
&\le 4C_{k - 1, T^{*}} \int_{0}^{\tau} \int_{\mathbb{R}^{n}} \left|e^{-\frac{|\xi|}{2}s}\frac{\sin\left(\frac{\sqrt{3}}{2}|\xi|s\right)}{\frac{\sqrt{3}}{2}|\xi|}\right|^{2} d\xi ds \nonumber \\
&+ 2C_{k - 1, T^{*}} \int_{\tau}^{T^{*}} \int_{\mathbb{R}^{n}} \left|e^{-\frac{|\xi|}{2}s}\frac{\sin\left(\frac{\sqrt{3}}{2}|\xi|s\right)}{\frac{\sqrt{3}}{2}|\xi|}\right|^{2} [1 - \cos((x_{0} - x_{1})\cdot \xi)] d\xi ds \nonumber \\
&:= 4C_{k - 1, T^{*}}J_{3} + 2C_{k - 1, T^{*}}J_{4},
\end{align}
where $\tau > 0$ will be chosen later. We have repeatedly used the fact that as long as $\delta$ is chosen  so that it is also less than one (see \eqref{delta1}), then $t \in [0, T^{*}]$ for $t \in S_{\delta}$. Note that 
\begin{equation*}
\int_{0}^{T^{*}} \int_{\mathbb{R}^{n}} \left|e^{-\frac{|\xi|}{2}s}\frac{\sin\left(\frac{\sqrt{3}}{2}|\xi|s\right)}{\frac{\sqrt{3}}{2}|\xi|}\right|^{2} d\xi ds = \int_{0}^{T^{*}} \int_{\mathbb{R}^{n}} |K_{s}(y)|^{2} dy ds < \infty,
\end{equation*}
by a calculation similar to \eqref{viscouscalc}. Therefore, by choosing $\tau \in (0, T^{*})$ sufficiently small, we can make 
\begin{equation}\label{J3}
4C_{k - 1, T^{*}}J_{3} < \frac{\epsilon}{2}.
\end{equation}
Now that we have fixed a choice of $\tau$, we consider $J_{4}$. We split it into two integrals,
one over the frequencies $\xi$ such that $| \xi | > Ms^{-1}$, and the other over $| \xi | \le Ms^{-1}$,
where $M > 0$ we will be chosen later:
\begin{equation*}
J_{4} \le \int_{\tau}^{T^{*}} \int_{|\xi| > Ms^{-1}} \cdot + \int_{\tau}^{T^{*}} \int_{|\xi| \le Ms^{-1}} \left|e^{-\frac{|\xi|}{2}s}\frac{\sin\left(\frac{\sqrt{3}}{2}|\xi|s\right)}{\frac{\sqrt{3}}{2}|\xi|}\right|^{2} [1 - \cos((x_{0} - x_{1})\cdot \xi)] d\xi ds.
\end{equation*}
By noting that  $\frac{\sin\left(\frac{\sqrt{3}}{2}|\xi|s\right)}{\frac{\sqrt{3}}{2}|\xi|} \le s \le T^{*}$ and $0 \le 1 - \cos((x_{0} - x_{1})\cdot \xi) \le 2$ in the first integral, and $0 \le 1 - \cos(\theta) \le \frac{1}{2}\theta^{2}$ in the second integral, we get:
\begin{align*}
J_{4} &\le 2(T^{*})^{2} \int_{\tau}^{T^{*}} \int_{|\xi| > Ms^{-1}} e^{-|\xi|s} d\xi ds + \int_{\tau}^{T^{*}} \int_{|\xi| \le Ms^{-1}} e^{-|\xi|s} |x_{0} - x_{1}|^{2} d\xi ds \\
&= 2(T^{*})^{2} \int_{\tau}^{T^{*}} s^{-n} \int_{|\eta| > M} e^{-|\eta|} d\eta ds + |x_{0} - x_{1}|^{2} \int_{\tau}^{T^{*}} s^{-n} \int_{|\eta| \le M} e^{-|\eta|} d\eta ds \\
&\le 2T^{*} \tau^{-n} \left((T^{*})^{2}\int_{|\eta| > M} e^{-|\eta|} d\eta + |x_{0} - x_{1}|^{2} \int_{\mathbb{R}^{n}} e^{-|\eta|} d\eta \right).
\end{align*}
By taking $M$ sufficiently large such that 
\begin{equation*}
\int_{|\eta| > M} e^{-|\eta|} d\eta < \frac{1}{2C_{k - 1, T^{*}}}\frac{1}{2(T^{*})^{3}\tau^{-n}}\frac{\epsilon}{4},
\end{equation*}
and then taking $\delta > 0$ sufficiently small satisfying the condition \eqref{delta1}, such that
\begin{equation*}
2T^{*} \tau^{-n} \delta^{2} \int_{\mathbb{R}^{n}} e^{-|\eta|}d\eta < \frac{1}{2C_{k - 1, T^{*}}}\frac{\epsilon}{4},
\end{equation*}
we have that $2C_{k - 1, T^{*}} J_{4} < \frac{\epsilon}{2}$ whenever $(t, x_{0}), (t, x_{1}) \in S_{\delta}$ with $|x_{0} - x_{1}| < \delta$. Using this fact along with \eqref{J3} in \eqref{spacecontest} establishes the desired result \eqref{cont2}.
\end{proof}

\section{Acknowledgements}
This work was partially supported by the National Science Foundation under grants DMS-1613757, DMS-1853340, and DMS-2011319. 
This material is based upon work supported by the National Science Foundation under grant DMS-1928930 while the authors participated in a program hosted by the Mathematical Sciences Research Institute in Berkeley, California, during the 
Spring 2021 semester.

\bibliography{SVWEbib}

\begin{thebibliography}{10}

\bibitem{BarGruLasTuff2}
V.~Barbu, Z.~Gruji\'{c}, I.~Lasiecka, and A.~Tuffaha.
\newblock Existence of the energy-level weak solutions for a nonlinear
  fluid-structure interaction model.
\newblock In {\em Fluids and waves}, volume 440 of {\em Contemp. Math.}, pages
  55--82. Amer. Math. Soc., Providence, RI, 2007.

\bibitem{BarGruLasTuff}
V.~Barbu, Z.~Gruji\'{c}, I.~Lasiecka, and A.~Tuffaha.
\newblock Smoothness of weak solutions to a nonlinear fluid-structure
  interaction model.
\newblock {\em Indiana Univ. Math. J.}, 57(3):1173--1207, 2008.

\bibitem{BdV1}
H.~Beir\~{a}o~da Veiga.
\newblock On the existence of strong solutions to a coupled fluid-structure
  evolution problem.
\newblock {\em J. Math. Fluid Mech.}, 6(1):21--52, 2004.

\bibitem{CS}
L.~Caffarelli and L.~Silvestre.
\newblock An extension problem related to the fractional {L}aplacian.
\newblock {\em Comm. Partial Differential Equations}, 32(8):1245--1260, 2007.

\bibitem{CDEM}
A.~Chambolle, B.~Desjardins, M.~J. Esteban, and C.~Grandmont.
\newblock Existence of weak solutions for the unsteady interaction of a viscous
  fluid with an elastic plate.
\newblock {\em J. Math. Fluid Mech.}, 7(3):368--404, 2005.

\bibitem{ChengShkollerCoutand}
C.~H.~A. Cheng, D.~Coutand, and S.~Shkoller.
\newblock Navier-{S}tokes equations interacting with a nonlinear elastic
  biofluid shell.
\newblock {\em SIAM J. Math. Anal.}, 39(3):742--800, 2007.

\bibitem{ChenShkoller}
C.~H.~A. Cheng and S.~Shkoller.
\newblock The interaction of the 3{D} {N}avier-{S}tokes equations with a moving
  nonlinear {K}oiter elastic shell.
\newblock {\em SIAM J. Math. Anal.}, 42(3):1094--1155, 2010.

\bibitem{CD}
D.~Conus and R.~C. Dalang.
\newblock The non-linear stochastic wave equation in high dimensions.
\newblock {\em Electron. J. Probab.}, 13(22):629--670, 2008.

\bibitem{CSS1}
D.~Coutand and S.~Shkoller.
\newblock Motion of an elastic solid inside an incompressible viscous fluid.
\newblock {\em Arch. Ration. Mech. Anal.}, 176(1):25--102, 2005.

\bibitem{CSS2}
D.~Coutand and S.~Shkoller.
\newblock The interaction between quasilinear elastodynamics and the
  {N}avier-{S}tokes equations.
\newblock {\em Arch. Ration. Mech. Anal.}, 179(3):303--352, 2006.

\bibitem{Dalang}
R.~C. Dalang.
\newblock Extending martingale measure stochastic integral with applications to
  spatially homogeneous s.p.d.e.'s.
\newblock {\em Electron. J. Probab.}, 4(6):1--29, 1999.

\bibitem{DMini}
R.~C. Dalang.
\newblock The stochastic wave equation.
\newblock In D.~Khoshnevisan and F.~Rassoul-Agha, editors, {\em A minicourse on
  stochastic partial differential equations}, volume 1962 of {\em Lecture Notes
  in Mathematics}. Springer-Verlag, Berlin Heidelberg, 2009.

\bibitem{DF}
R.~C. Dalang and N.~E. Frangos.
\newblock The stochastic wave equation in two spatial dimensions.
\newblock {\em Ann. Probab.}, 26(1):187--212, 1998.

\bibitem{DSS}
R.~C. Dalang and M.~Sanz-Sol\'{e}.
\newblock H\"{o}lder-{S}obolev regularity of the solution to the stochastic
  wave equation in dimension 3.
\newblock {\em Mem. Amer. Math. Soc.}, 199(931):vi+70, 2009.

\bibitem{Gunzburger}
Q.~Du, M.~D. Gunzburger, L.~S. Hou, and J.~Lee.
\newblock Analysis of a linear fluid-structure interaction problem.
\newblock {\em Discrete Contin. Dyn. Syst.}, 9(3):633--650, 2003.

\bibitem{CG}
C.~Grandmont.
\newblock Existence of weak solutions for the unsteady interaction of a viscous
  fluid with an elastic plate.
\newblock {\em SIAM J. Math. Anal.}, 40(2):716--737, 2008.

\bibitem{Grandmont16}
C.~Grandmont and M.~Hillairet.
\newblock Existence of global strong solutions to a beam-fluid interaction
  system.
\newblock {\em Arch. Ration. Mech. Anal.}, 220(3):1283--1333, 2016.

\bibitem{FSIforBIO_Lukacova}
C.~Grandmont, M.~Luk\'{a}\v{c}ov\'{a}-Medvid'ov\'{a}, and \v{S}.
  Ne\v{c}asov\'{a}.
\newblock Mathematical and numerical analysis of some {F}{S}{I} problems.
\newblock In T.~Bodn\'{a}r, G.~P. Galdi, and \v{S}. Ne\v{c}asov\'{a}, editors,
  {\em Fluid-structure interaction and biomedical applications}, Advances in
  Mathematical Fluid Mechanics, pages 1--77. Birkh\"{a}user, 2014.

\bibitem{Hairer}
M.~Hairer.
\newblock {\em An introduction to stochastic {P}{D}{E}s}.
\newblock Lecture notes, 2009.
\newblock \url{http://arxiv.org/abs/0907.4178}.

\bibitem{IgnatovaKukavica}
M.~Ignatova, I.~Kukavica, I.~Lasiecka, and A.~Tuffaha.
\newblock On well-posedness for a free boundary fluid-structure model.
\newblock {\em J. Math. Phys.}, 53(11):115624, 13, 2012.

\bibitem{ignatova2014well}
M.~Ignatova, I.~Kukavica, I.~Lasiecka, and A.~Tuffaha.
\newblock On well-posedness and small data global existence for an interface
  damped free boundary fluid-structure model.
\newblock {\em Nonlinearity}, 27(3):467--499, 2014.

\bibitem{KZ}
A.~Karczewska and J.~Zabczyk.
\newblock Stochastic {P}{D}{E}s with function-valued solutions.
\newblock In Ph. Clément, F.~den Hollander, J.~van Neerven, and B.~de~Pagter,
  editors, {\em Infinite dimensional stochastic analysis}, Proceedings of the
  Colloquium of the Royal Netherlands Academy of Arts and Sciences, pages
  197--216, Amsterdam, 1999.

\bibitem{DK}
D.~Khoshnevisan.
\newblock {\em Multiparameter processes: an introduction to random fields}.
\newblock Springer-Verlag, New York, 2002.

\bibitem{KMini}
D.~Khoshnevisan.
\newblock A primer on stochastic partial differential equations.
\newblock In D.~Khoshnevisan and F.~Rassoul-Agha, editors, {\em A minicourse on
  stochastic partial differential equations}, volume 1962 of {\em Lecture Notes
  in Mathematics}. Springer-Verlag, Berlin Heidelberg, 2009.

\bibitem{KuanCanicNVWE}
J.~Kuan and S.~{\v C}ani\'c.
\newblock Deterministic ill-posedness and probabilistic well-posedness of the
  viscous nonlinear wave equation describing fluid-structure interaction.
\newblock {\em Trans. Amer. Math. Soc.}, to appear.

\bibitem{Kuk}
I.~Kukavica and A.~Tuffaha.
\newblock Solutions to a fluid-structure interaction free boundary problem.
\newblock {\em DCDS-A}, 32(4):1355--1389, 2012.

\bibitem{KukavicaTuffahaZiane}
I.~Kukavica, A.~Tuffaha, and M.~Ziane.
\newblock Strong solutions for a fluid structure interaction system.
\newblock {\em Adv. Differential Equations}, 15(3-4):231--254, 2010.

\bibitem{LengererRuzicka}
D.~Lengeler and M.~R{\r u}{\v z}i{\v c}ka.
\newblock Weak solutions for an incompressible {N}ewtonian fluid interacting
  with a {K}oiter type shell.
\newblock {\em Arch. Ration. Mech. Anal.}, 211(1):205--255, 2014.

\bibitem{Lequeurre}
J.~Lequeurre.
\newblock Existence of strong solutions to a fluid-structure system.
\newblock {\em SIAM J. Math. Anal.}, 43(1):389--410, 2011.

\bibitem{Astorino}
P.~Moireau, N.~Xiao, M.~Astorino, C.~A. Figueroa, D.~Chapelle, C.~A. Taylor,
  and J.-F. Gerbeau.
\newblock External tissue support and fluid-structure simulation in blood
  flows.
\newblock {\em Biomech Model Mechanobiol.}, 11(1-2):1--18, 2012.

\bibitem{MuhaCanic13}
B.~Muha and S.~{\v C}ani\'c.
\newblock Existence of a weak solution to a nonlinear fluid-structure
  interaction problem modeling the flow of an incompressible, viscous fluid in
  a cylinder with deformable walls.
\newblock {\em Arch. Ration. Mech. Anal.}, 207(3):919--968, 2013.

\bibitem{BorSun3d}
B.~Muha and S.~{\v C}ani\'c.
\newblock A nonlinear, 3{D} fluid-structure interaction problem driven by the
  time-dependent dynamic pressure data: a constructive existence proof.
\newblock {\em Commun. Inf. Syst.}, 13(3):357--397, 2013.

\bibitem{BorSunMultiLayered}
B.~Muha and S.~{\v C}ani\'c.
\newblock Existence of a solution to a fluid-multi-layered-structure
  interaction problem.
\newblock {\em J. Differential Equations}, 256(2):658--706, 2014.

\bibitem{BorSunNonLinearKoiter}
B.~Muha and S.~{\v C}ani\'c.
\newblock Fluid-structure interaction between an incompressible, viscous 3{D}
  fluid and an elastic shell with nonlinear {K}oiter membrane energy.
\newblock {\em Interfaces Free Bound.}, 17(4):465--495, 2015.

\bibitem{BorSunSlip}
B.~Muha and S.~{\v C}ani\'c.
\newblock Existence of a weak solution to a fluid-elastic structure interaction
  problem with the {N}avier slip boundary condition.
\newblock {\em J. Differential Equations}, 260(12):8550--8589, 2016.

\bibitem{Raymond}
J.-P. Raymond and M.~Vanninathan.
\newblock A fluid-structure model coupling the {N}avier-{S}tokes equations and
  the {L}am\'{e} system.
\newblock {\em J. Math. Pures Appl. (9)}, 102(3):546--596, 2014.

\bibitem{RY}
D.~Revuz and M.~Yor.
\newblock {\em Continuous martingales and {B}rownian motion}, volume 293 of
  {\em A Series of Comprehensive Studies in Mathematics}.
\newblock Springer-Verlag, Berlin Heidelberg, third edition, 1999.

\bibitem{SSS}
M.~Sanz-Sol\'{e} and M.~Sarr\`{a}.
\newblock H\"{o}lder continuity for the stochastic heat equation with spatially
  correlated noise.
\newblock In R.~C. Dalang, M.~Dozzi, and F.~Russo, editors, {\em Seminar on
  Stochastic Analysis, Random Fields and Applications III}, volume~52 of {\em
  Progress in Probability}, pages 259--268. Birkh\"{a}user, Basel, 2002.

\bibitem{Walsh}
J.~B. Walsh.
\newblock An introduction to stochastic partial differential equations.
\newblock In P.~L. Hennequin, editor, {\em \'{E}cole d'\'{E}t\'{e} de
  {P}robabilit\'{e}s de {S}aint {F}lour {X}{I}{V} - 1984}, volume 1180 of {\em
  Lecture Notes in Mathematics}, pages 265--439. Springer, Berlin, 1986.

\end{thebibliography}
\bibliographystyle{plain}

\end{document}